\newcommand{\C}{{\mathbb C}}
\newcommand{\R}{{\mathbb R}}
\newtheorem{theorem}{Theorem}[section]
\newtheorem{lemma}[theorem]{Lemma}
\newtheorem{remark}[theorem]{Remark}
\newtheorem{example}[theorem]{Example}
\newtheorem{corollary}[theorem]{Corollary}
\newtheorem{proposition}[theorem]{Proposition}
\newtheorem{definition}[theorem]{Definition}
\def\cal{\mathcal}
\newcommand{\calh}[0]{{\cal H}}
\newcommand{\call}[0]{{\cal L}}
\newcommand{\calg}[0]{{\cal G}}
\newcommand{\cala}[0]{{\cal A}}
\newcommand{\cale}[0]{{\cal E}}
\newcommand{\calk}[0]{{\cal K}}
\newcommand{\cali}[0]{{\cal I}}
\newcommand{\cals}[0]{{\cal S}}
\newcommand{\calm}[0]{{\cal M}}
\newcommand{\calc}[0]{{\cal C}}
\newcommand{\calt}[0]{{\cal T}}
\begin{document}

\title[A certain Baum--Connes map]{A note on a certain Baum--Connes map for inverse semigroups}
% via localization of triangulated categories}
\author[B. Burgstaller]{Bernhard Burgstaller}
\email{bernhardburgstaller@yahoo.de}
\subjclass{19K35, 20M18, 46L80, 46L55}
\keywords{inverse semigroup, Baum--Connes, triangulated category, crossed product, $KK$-theory}

\maketitle

\begin{abstract}
Let $G$ denote a countable inverse semigroup.
We construct a kind of a Baum--Connes map $K(\tilde A \rtimes G) \rightarrow K(A \rtimes G)$
by a categorial approach via localization of triangulated categories, developed by R. Meyer and
R. Nest for groups $G$.
We allow the coefficient algebras $A$ %only
to be in a special class of algebras called fibered $G$-algebras.
This note continues and fixes our preprint ``Attempts to define a Baum--Connes map via localization
of %triangulated
categories for inverse semigroups''.
%for a special class of coefficient algebras $A$

\end{abstract}

\tableofcontents

\section{Introduction}

In \cite{meyernest}, R. Meyer and R. Nest found an equivalent definition of the Baum--Connes map \cite{baumconneshigson1994}.
The new definition defines the Baum--Connes map for a coefficient $G$-algebra $A$ as a homomorphism
$$K(\tilde A \rtimes G) \rightarrow K(A \rtimes G)$$
of $K$-theory groups, where $\tilde A$ is a certain approximation for $A$.
To be precise, $\tilde A$ sits in the triangulated subcategory of $KK^G$ which is generated
by induced algebras of the form $\mbox{Ind}_H^G(B)$ for some compact subgroups $H$ of $G$.
%This gives a potentially possible way to compute $K(\tilde A \rtimes G)$ by homological means.
This gives a potentially possible way to compute the left hand side $K(\tilde A \rtimes G)$
of the new Baum--Connes map by homological means.
%, for instance by \cite[5.1]{meyerTriangulatedII} applied to
%the functor
%$F: KK^G \rightarrow \mbox{Ab}:
%$F(A) = K(A \rtimes G)$. % from $KK^G$ to the abelian groups.
If $\tilde A=\mbox{Ind}_H^G(B)$ then one may directly use
\begin{equation}   \label{comp1}
K(\tilde A \rtimes G) = K(\mbox{Ind}_H^G(B) \rtimes G) \cong K(B \rtimes H) \cong K^H(\C,B)
\end{equation}
by Green's imprimitivity theorem \cite{green1978} and the Green--Julg isomorphism.
%to compute the left hand side of the new Baum--Connes map.

In \cite{attempts} we tried to define a Baum--Connes map in an analogous way for inverse semigroups $G$,
but ran into serve problems which at the end turned out to rely on the fact
that we assumed a wrong right adjoint functor to the induction functor.
%from the first place.

%defined a Baum--Connes map for groups. The last definition is equivalent to the classical Baum--Connes
%map.

%In \cite{attempts} we tried to define a Baum--Connes map in an analogous way as N and 
%defined a Baum--Connes map for groups. The last definition is equivalent to the classical Baum--Connes
%map.
%
%In \cite{attempts} we tried to define a Baum--Connes map in an analogous way for countable
%inverse semigroups $G$. We were not able 

In this note we shall define a correct right adjoint functor, called the fibered restriction functor,
%see Definition \ref{deffiberedrestriction},
and %it
is given by
$\mbox{R}_G^H(A)=\oplus_{e \in E_H} A_{\varepsilon_e}$.
It turns out, however, that it will only work on the subcategory of $KK^G$
which is generated by fibered $G$-algebras.
These are $G$-algebras of the form $\bigoplus_{e \in E} A_{\varepsilon_e}$.
%,
%see Remark \ref{remark2} and Definition \ref{deffibered}.
($C_0(X)$ is not fibered.)
Hence we can build a Baum--Connes map only for fibered coefficient algebras.
%,
%see Definition \ref{defBaumConnesmap}.
% in that subcategory.

%
%In this note we shall define a correct right adjoint functor for the induction functor.
%The price is, however, that it will only work on the subcategory of $KK^G$
%which is generated by fibered $G$-algebras. ($C_0(X)$ is not fibered.)
%Hence we can build a Baum--Connes map only for coefficient algebras in that subcategory.

The idea is as follows. %We regard the $G$-algebra $\mbox{Ind}_H^G(A)$ as 
We analyzed the $G$-action on $\mbox{Ind}_H^G(A)$ and % translate it to groupoid action.
interpreted it as a groupoid action.
Then we made the simple observation (encoded in Lemma \ref{lemma12}) that a simple characteristic function
$1_{g (1-e_1) \ldots (1-e_n)} \in \mbox{Ind}_H^G(A)$ %corresponds to a characteristic function 
for $g \in G, e_i \in E$
has carrier the single point $\varepsilon_{g g^*}$ in the base space of the groupoid.
(The character 
is defined by $\varepsilon_e(f) = 1_{\{f \ge e\}}$.)   %($e,f \in E$)
%(The character %$\varepsilon_e \in (C^*(E))^*$,
%$\varepsilon_e(f) = 1_{\{f \ge e\}}$ %($e,f \in E$)
%is defined for example in \cite{ks}.)
%
The $G$-action on $\mbox{Ind}_H^G(A)$ then just shifts $1_g$ as usual, that is,
$h(1_g)$ has carrier the single point $\varepsilon_{h g g^* h^*}$ for $h \in G$.
%$\varepsilon_{g g^*}$ would be shifted in $C_0(X)$.
In other words, $\mbox{Ind}_H^G(A)$ is a fibered $G$-algebra.
We shall not lay out all the heuristical idea, %but it is simple and
but it is encoded in this paper and see Remark \ref{remark2} and Example \ref{example}.
%and sometimes we give hints like in Remark \ref{remark2}.
%which as so often may then look considerably more complicated than it is.

%We cannot lay out all the heuristical idea, but it is simple and encoded in this paper,
%which as so often may then look considerably more complicated than it is.

%For readers less familiar with $KK^G$-theory, our overview article \cite{}
%might help to get a quick introduction.

%However, $\C$ seems illposed.
%However, various aspects forced us to go from the anfänglich used $C_0(X)$
%to $\C$. (These aspects were for instance the uncoutability of the $K$-homology of $C_0(X)$, the unnatural behaviour
%of certain functors under the balanced tensor product.)

The main work of this note is the definition of the fibered restriction functor in \ref{deffiberedrestriction} and the proof that it is
right adjoint to the induction functor in \ref{propAdjoint}.
We shall also introduce a new $\ell^2(G)$-space as a technical tool in \ref{definition1}.
We can use blueprints from \cite{attempts} to obtain the fact that $KK^G$ is triangulated for fibered $G$-algebras.
(This is of course analogous to the proof by \cite{meyer} and \cite{meyernest}.)
Fortunately, Ralf Meyer pointed out to us his work \cite{meyerTriangulatedII}
soon after publishing \cite{attempts}, which then immediately yields the approximation
$\tilde A$ mentioned above and thus the Baum--Connes map, and we need not go through the technicalities
to define such an approximation as in \cite{attempts}.

The resulting Baum--Connes map given in Definition \ref{defBaumConnesmap} is justified in so far as computation (\ref{comp1})
%For Sieben's product it is justified in so far as in then 
works also for inverse semigroups and %for
Sieben's crossed product \cite{sieben1997}
by \ref{thm1}.   %\cite{burgiGreen}.
It
%The Baum--Connes map
has, however, usually %no
less potential for computing the full crossed product $\C \rtimes G$,
see Remark \ref{remark3}.
%Its use for possibly computing the full crossed product $\C \rtimes G$ is however usually ill-posed, see Remark.
%
%Besser: das problem wird nicht vereinfacht, da zb nur die identität.

Sections \ref{sectionSomelemmas}-\ref{sectionAdjoint} essentially occupy the definition of the
fibered restriction functor and the verification that it is right adjoint to the induction functor.
%The remaining sections
The last Sections \ref{sectionAdaption}-\ref{sectionBC} cover % merely
slight adaption and collection of known results.

%We can take the opportunity to thank Ralf Meyer who fortunately pointed out to us his work , which then immediately yields the approximation
%$\tilde A$ mentioned above and consequently the new Baum--Connes map, and we no longer need to go through
%immense technicalities
%to define such an approximation as we did (or tried) it in \cite{attempts}.

%\section{Some notions}

\section{Some notation}

In this note, $G$ denotes a countable discrete inverse semigroup.
We define $G$-actions on $C^*$-algebras and $G$-equivariant $KK$-theory $KK^G$ as in \cite{burgiSemimultiKK}.
In case of an inverse semigroup $G$ the formal definitions simplify slightly, for which we refer to
\cite{attempts}.
The letter $C^*_G$ will denote the category of $G$-algebras with their $G$-equivariant $*$-homomorphisms. % as morphisms. 
%As in \cite{attempts},
$KK^G$ will stand for the Kasparov category consisting of $G$-algebras as object class and
Kasparov groups as morphism sets.   % equipped with the Kasparov product as composition.
For convenience of the reader we recall the notion of a $G$-action on a $C^*$-algebra. %s.

\begin{definition}
{\rm
A {\em $G$-algebra} is a $C^*$-algebra $A$ equipped with a semigroup homomorphism
$\alpha:G \rightarrow  \mbox{End}(A)$ such that $\alpha_{e} (a) b = a \alpha_{e} (b)$
for all $a,b \in A$ and $e \in E$.
}
\end{definition}

Throughout we write $g(a) := \alpha_g(a)$.
The letter $E$ stands for the set of idempotent elements of $G$,
and $C^*(E)$ for the abelian $C^*$-algebra it %  ``freely'' %freely (beside its semigroup structure)
generates.    %freely, but subject to the semigroup structure of $G$, generates.
%freely generates (subject to the given semigroup structure).
We write $C^*(E) \cong C_0(X)$ by Gelfand's theorem, where $X$ denotes the character space of $C^*(E)$.
Every $e \in E$ defines a
character $\varepsilon_e \in X$ by %of $C^*(E)$ by
$\varepsilon_e(f) = 1_{\{f \ge e\}}$ for all $f \in E$.
The character set $\{\varepsilon_e\}_{e \in E}$ is dense in $X$.
See \cite{paterson} or \cite[2.6]{khoshkamskandalis2004} for more details.

The algebra $C^*(E)$ is a $G$-algebra under the $G$-action $g(e)= g e g^*$.
For heuristical comments we shall consider a bigger function space than $C_0(X)$.
Let $\C^X$ be the set of functions from $X$ to $\C$. It is endowed with the $G$-action
induced by the maps $g:X \rightarrow X$ given by $(g(x)) (e) = x(g^* e g)$ for all $x \in X, g \in G,  e\in E$.
This is consistent with the $G$-action on $C_0(X)$ defined before, that is, $C_0(X) \subseteq \C^X$ $G$-equivariantly.
We shall write $1_x$ for the characteristic function $1_{\{x\}}$ of a single point $x$.

Every $G$-algebra $A$ is equipped with a $G$-equivariant $*$-homomorphism $C_0(X) \rightarrow Z\calm(A)$
(center of the multiplier algebra of $A$).
We denote the $C_0(X)$-balanced tensor product of $G$-algebras or $C_0(X)$-algebras $A,B$ by $A \otimes^{C_0(X)} B
:= (A \otimes B)/I$, where $I$ is the ideal generated by $e(a) \otimes b - a \otimes e(b)$ for all $a,b \in A,e \in E$.
We write $A_x = \C\{1_x\} \otimes^{C_0(X)} A$ for the fiber of $A$ in $x \in X$.

The universal (or occasionally an unspecified) crossed product \cite{khoshkamskandalis2004} is denoted by $A \rtimes G$, and Sieben's (``compatible'' universal) crossed
product \cite{sieben1997} by $A \widehat \rtimes G$.
We sometimes consider another inverse semigroups
\begin{eqnarray*}
\tilde E &=& \{e_0 (1-e_1) \cdots (1-e_n) \in \C \rtimes G |\, e_i \in E, n \ge 1\},  \\
\tilde G &=& \{ g p \in \C \rtimes G| \, g \in G, p \in \tilde E\}
\end{eqnarray*}
as subinverse semigroups of $\C \rtimes G$ under multiplication and involution.

For an assertion $\cala$ we write $[\cala]$ for the real number which is $1$ if $\cala$ is true,
and $0$ %if $\cala$ is false.
otherwise.
In case that we have given another inverse semigroup $H$ we shall specify the associated sets $E$ and $X$
by writing $E_H$ and $X_H$.
%For possibly remaining unclarity and
%For possibly needed further details we would like to refer the reader to \cite{attempts}.
Usually $H$ denotes a finite subinverse semigroup of $G$ and
$\mbox{Res}_G^H$ %:KK^G \rightarrow KK^H$
the usual restriction functor.
For possibly further needed details we refer to \cite{attempts}.

\section{Some lemmas}

\label{sectionSomelemmas}

In this section we observe some % simple but
central lemmas.
%They came out by analyzing the $G$-action on the induction algebra $\mbox{Ind}_H^G$
%by regarding $G$ as a groupoid.
%(ignoring any topology).
%It turns out that the induction algebra just has single fibers.

\begin{lemma}   \label{lemma12}
Let % $f,e_0,e_1, \ldots, e_n \in E$
$f \in E$ and $p:=e_0 (1-e_1) \ldots (1-e_n)$ a nonzero element in $\tilde E$
($e_i \in E$). %Then $f \ge e_0 (1-e_1) \ldots (1-e_n)$ if and only if
%$f \ge e_0$.
Then
$$f \ge p %e_0 (1-e_1) \ldots (1-e_n)
\quad \Leftrightarrow \quad
f \ge e_0.$$

\end{lemma}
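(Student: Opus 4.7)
The plan is to work inside the commutative $C^*$-algebra $C^*(E)\cong C_0(X)$, in which both $f$ and $p$ are projections and the partial order satisfies $h\ge k \iff hk=k$. The evaluation homomorphisms $\varepsilon_e$, which satisfy $\varepsilon_e(h)=[h\ge e]$ on idempotents $h\in E$ and are multiplicative on $C^*(E)$, are the tool that turns the order relations into a one-line calculation.

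For the implication $f\ge e_0 \Rightarrow f\ge p$ I would simply compute
\[
fp \;=\; fe_0\prod_{i=1}^n(1-e_i) \;=\; e_0\prod_{i=1}^n(1-e_i) \;=\; p,
\]
using $fe_0=e_0$.

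For the converse, the idea is to evaluate the hypothesis $fp=p$ at the single character $\varepsilon_{e_0}$. The preliminary (and only nontrivial) observation is that $\varepsilon_{e_0}(p)=1$: indeed
\[
\varepsilon_{e_0}(p) \;=\; [e_0\ge e_0]\cdot\prod_{i=1}^n\bigl(1-[e_i\ge e_0]\bigr),
\]
and if some $e_i$ satisfied $e_i\ge e_0$ then $(1-e_i)e_0=0$ in $C^*(E)$ would force $p=0$, contradicting $p\ne 0$. Applying $\varepsilon_{e_0}$ to $fp=p$ and cancelling the nonzero $\varepsilon_{e_0}(p)$ yields $\varepsilon_{e_0}(f)=1$, i.e.\ $f\ge e_0$.

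The argument is essentially multiplicative; the only decision that is not automatic is the choice of probe, and the hypothesis $p\ne 0$ is used precisely to guarantee that the chosen character $\varepsilon_{e_0}$ lies in the support of $p$.
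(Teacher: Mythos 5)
Your proof is correct, and the backward direction coincides with the paper's. For the forward direction, however, you take a genuinely different route. The paper expands the brackets in $fp=p$ into an alternating sum of idempotents $f e_0\prod_{i\in S}e_i$ versus $e_0\prod_{i\in S}e_i$, observes that $p\neq 0$ forces $e_0 e_i< e_0$ for all $i\ge 1$, and then invokes the \emph{linear independence of the projections $E$ in $C^*(E)$} to match coefficients and conclude $fe_0=e_0$. You instead apply the single multiplicative character $\varepsilon_{e_0}$ to the identity $fp=p$; the same observation ($p\neq 0$ implies no $e_i\ge e_0$) gives $\varepsilon_{e_0}(p)=1$, and cancelling yields $\varepsilon_{e_0}(f)=1$, i.e.\ $f\ge e_0$. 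Your approach is arguably more economical: it replaces the global linear-independence fact (which the paper invokes without proof and which itself is usually established via these very characters) by the existence of one character $\varepsilon_{e_0}\in X$, which the paper's notation section provides explicitly. The paper's coefficient-matching argument, on the other hand, stays entirely inside the $*$-algebra generated by $E$ and makes visible exactly which term ($fe_0$ versus $e_0$) is responsible for the conclusion. Both proofs use the hypothesis $p\neq 0$ in the same way and at the same point.
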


\begin{proof}
$\Leftarrow$ is clear.
$\Rightarrow$: %Then
By expanding the brackets in $f p = p$ we get
%
%the left inequality we get %have
\begin{equation}  \label{q1}
f e_0 - f e_0 e_1 + f e_0 e_1 e_2 \pm \ldots = e_0 - e_0 e_1 + e_0 e_1 e_2 \pm \ldots
\end{equation}
Since $e_0 (1-e_1) \ldots (1-e_n) \neq 0$, $e_0 e_i \neq e_0$ for all $i \ge 1$.
Hence
$$e_0 e_1, e_0 e_1 e_2 , e_0 e_2, \ldots < e_0.$$
Since the projections $E$ are linearly independent in $C^*(E)$, the only possibility that
(\ref{q1}) is true is that $f e_0 = e_0$. That is, $f \ge e_0$.
\end{proof}

\begin{definition}
{\rm
%Define a map $\sigma:\tilde E \backslash \{0\} \rightarrow E$ as follows.
For a nonzero $p=e_0 (1-e_1) \ldots (1-e_n) \in \tilde E$ ($e_i \in E$) set % define
$$\sigma(p) := e_0,$$
the {\em leading coefficient} of $p$.
This yields a map $\sigma:\tilde E \backslash \{0\} \rightarrow E$.

%
%Define $\sigma:\tilde E \rightarrow E$ by
%$$\sigma\big ( e_0 (1-e_1) \ldots (1-e_n) \big ) = e_0.$$
}
\end{definition}

Note that the last definition is well-defined since $e_0$ is the unique minimal projection in $E$ such that
$e_0 \ge p$ by Lemma \ref{lemma12}.
The next lemma shows how the leading coefficient already uniquely determines certain sets of projections
in $\tilde E$.

%The next lemma shows that for finite inverse semigroups minimal projections are uniquely determined
%by their leading coefficient.
%The next lemma shows how projections associated to finite inverse semigroups are uniquely determined
%by their leading coefficient.

\begin{lemma}  \label{lemma11}
%(a)
%For every finite subset $P \subseteq \tilde E$ consisting of mutually orthogonal nonzero projections,
%the restriction $\sigma|_P: P \rightarrow E$ %:P \rightarrow \sigma(P)$
%is injective.

%(b)
Let $H %=\{h_1, \ldots,h_n\}
 \subseteq G$ be a finite subinverse semigroup.
View $X_H \subseteq \tilde E_H$ as the set of all minimal nonzero projections of $\tilde E_H$.
%(common refinements of elements of $H$).
%. Write $X_H$ as
%$$X_H = \{ h_0(1-h_1) \ldots (1-h_n)| \, h_i \in H, 
Then the map $\sigma|_{X_H}: X_H \rightarrow E_H$ is a bijection.

\end{lemma}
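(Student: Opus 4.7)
The plan is to construct, for each $e\in E_H$, an explicit element $q_e\in \tilde E_H$ with $\sigma(q_e)=e$, show that the $q_e$ exhaust the minimal nonzero projections of $\tilde E_H$, and then read off both surjectivity and injectivity of $\sigma|_{X_H}$ from Lemma \ref{lemma12}.

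Since $H$ is finite, $C^*(E_H)$ is a finite-dimensional commutative $C^*$-algebra, so $C^*(E_H)\cong\C^{X_H}$ with $X_H$ finite and its atoms (minimal nonzero projections) corresponding bijectively to points of $X_H$. The characters $\varepsilon_e$, $e\in E_H$, are pairwise distinct (from $\varepsilon_e=\varepsilon_{e'}$ one gets $e\geq e'$ and $e'\geq e$), and by the density statement recalled in the paper they exhaust $X_H$. Now, for each $e\in E_H$, enumerate the finitely many elements of $E_H$ that are maximal strictly below $e$, say $e_1,\ldots,e_n$, and set
$$q_e \;:=\; e(1-e_1)\cdots(1-e_n)\;\in\;\tilde E_H.$$
A direct evaluation gives $\varepsilon_f(q_e)=1_{\{e\geq f\}}\prod_i\bigl(1-1_{\{e_i\geq f\}}\bigr)$, which equals $1$ exactly when $f\leq e$ and $f\not\leq e_i$ for any $i$, hence only when $f=e$. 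Thus $q_e\neq 0$ is precisely the atom of $C^*(E_H)$ associated to $\varepsilon_e$, and $\sigma(q_e)=e$ by definition of the leading coefficient.

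Next I would identify $X_H$ with the minimal nonzero projections of $\tilde E_H$. Each $q_e$ is an atom of $C^*(E_H)$ contained in $\tilde E_H$, so is automatically minimal nonzero in $\tilde E_H$. Conversely, if $p\in \tilde E_H$ is not atomic in $C^*(E_H)$, its support contains at least two characters; picking any $\varepsilon_e$ in this support gives $p q_e=q_e\in \tilde E_H\setminus\{0\}$, a strict lower bound of $p$, contradicting minimality. Hence the minimal nonzero projections of $\tilde E_H$ are exactly $\{q_e : e\in E_H\}$, which is how we realise $X_H\subseteq \tilde E_H$. Surjectivity of $\sigma|_{X_H}$ is then immediate from $\sigma(q_e)=e$.

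For injectivity, suppose $p,p'\in X_H$ satisfy $\sigma(p)=\sigma(p')=e_0$. Lemma \ref{lemma12} yields
$$\{f\in E_H : f\geq p\}=\{f\in E_H : f\geq e_0\}=\{f\in E_H : f\geq p'\}.$$
Writing $p=q_{e(p)}$, the left-hand set equals $\{f\in E_H : f\geq e(p)\}$, because $f q_{e(p)}=q_{e(p)}$ iff $\varepsilon_{e(p)}(f)=1$ iff $f\geq e(p)$. Comparing forces $e(p)=e_0$ and similarly $e(p')=e_0$, so $p=p'$. The main technical point I expect is the identification of ``minimal nonzero in $\tilde E_H$'' with ``atomic in $C^*(E_H)$''; this rests on the simple observation that $p q_e=q_e\in \tilde E_H$ whenever $\varepsilon_e$ lies in the support of $p$, which lets us split any non-atomic $p\in \tilde E_H$ by an element of $\tilde E_H$ itself.
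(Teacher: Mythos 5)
Your argument is correct, but it takes a genuinely different route from the paper's. The paper handles injectivity by a direct order-theoretic argument inside $\tilde E_H$: given two minimal projections $p=e_0(1-e_1)\cdots(1-e_n)$ and $q=f_0(1-f_1)\cdots(1-f_m)$ with $e_0=f_0$, it uses Lemma \ref{lemma12} together with the minimality of $q$ to show that each factor $(1-e_i)$ acts as the identity on $q$, whence $p\ge q$, and symmetrically $q\ge p$; no classification of $X_H$ is needed. You instead prove a stronger statement first, namely that the minimal nonzero projections of $\tilde E_H$ are exactly the atoms of the finite-dimensional algebra $C^*(E_H)$ and are explicitly parametrized as $q_e=e(1-e_1)\cdots(1-e_n)$ with the $e_i$ maximal strictly below $e$; bijectivity of $\sigma|_{X_H}$ then falls out. (For surjectivity the two constructions agree: your $q_e$ and the paper's $e\prod_{f\not\ge e}(1-f)$ have the same Gelfand transform, hence are the same projection.) Your approach costs a little more up front --- in particular the verification that a non-atomic $p\in\tilde E_H$ can be split by an element of $\tilde E_H$ itself, which works because $\tilde E_H$ is closed under multiplication --- but it buys the explicit identification of $X_H$ with $\{q_e\}_{e\in E_H}$ that the paper only obtains implicitly (and uses later, e.g.\ in Lemma \ref{lemma13}(d)). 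Two points worth making explicit: your claim that $\{\varepsilon_e\}_{e\in E_H}$ exhausts the character space uses that $C^*(E_H)$ is finite-dimensional so that density implies equality, and your claim that $f<e$ forces $f\le e_i$ for some maximal $e_i<e$ is a finiteness argument (take a maximal element of $\{g\in E_H: f\le g<e\}$); both are routine but are the places where finiteness of $H$ enters.
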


%The last lemma may surprise, as 

\begin{proof}
(a)
%Let $p^k = e_0^k (1- e_1^k) \ldots (1-e_{n_k}^k)$ be nonzero for $k=1,2$ with $e_i^k \le e_0^k$ without loss
%of generality.
%Assume that $\sigma(p^1) = e_0^1 = e_0^2 = \sigma(p^2)$.
%Let $i \ge 1$. Note that $e_i^1 < e_0^1 = e_0^2$. Hence, necessarily $e_i^1 = e_j^2$ for some $j \ge 1$.
%Hence $p^1 = p^2$.
%
Let $p=e_0(1-e_1) \ldots (1-e_n)$ and $q=f_0(1-f_1) \ldots (1-f_m)$ be nonzero elements in $X_H$
for $e_i,f_j \in E_H$.
Assume that $\sigma(p) = e_0 = f_0 = \sigma(q)$.
Let $i \ge 1$. Note that $e_i \not \ge e_0 = f_0$. Thus $e_i f_0 < f_0$.
Hence, necessarily $(1-e_i) q = q$. Consequently $p \ge q$. Similarly $p \le q$.
%
%Hence, necessarily $e_i e_0 = f_j f_0$ for some $j \ge 1$.
%Similarly so for the roles of $e$ and $f$ exchanged.
This shows injectivity of $\sigma$.
%
%Assume that $\sigma() = e_0 = f_0 = \sigma(f_0(1-f_1) \ldots (1-f_m))$
%(nonzero elements in $X_H$)
%for $e_i,f_j \in E_H$.
%Let $i \ge 1$. Note that $e_i e_0 < e_0 = f_0$. Hence, necessarily $e_i e_0 = f_j f_0$ for some $j \ge 1$.
%Similarly so for the roles of $e$ and $f$ exchanged. This shows injectivity of $\sigma$.
%%
%%Suppose some $e_i$ ($i \ge 1$) was not in $\{f_1, \ldots, f_m\}$. 
%%(b) By (a) it is enough to check surjectivity.
To prove surjectivity, consider
%Let
$e \in E_H$. Set $p= e \prod_{f \in E_H, f \not \ge e}(1-f)$. Then $p \in X_H$ and $\sigma(p)=e$.
\end{proof}

The restriction $\sigma|_{X_H}$ will also be denoted by $\sigma_H$.
Since $H_H = H$ as sets (see Definition \ref{defGH} below), we shall also write $H/H$ for $H_H/H$.

\begin{lemma}    \label{lemma13}
\begin{itemize}

\item[(a)]
One has $\sigma(p) \ge p$ for all $p \in \tilde E$.

\item[(b)]
We have $\sigma( g p g^*) = g \sigma(p) g^*$ for all $g \in G$ and $p \in \tilde E$. % and $h \in H$.

\item[(c)]
Furthermore, $\sigma^{-1}_H(e) \le e$ and $\sigma^{-1}_H(h e h^*) = h \sigma^{-1}_H(e) h^*$ for all $h \in H$ and $e \in E_H$.

\item[(d)]
Moreover, $\sum_{h \in H/H} \sigma^{-1}_H(h h^*) = 1_H$.

\item[(e)]
%Both $\sigma$ and $\sigma_H^{-1}$ are multiplicative.
The map $\sigma$ is multiplicative.

\end{itemize}
\end{lemma}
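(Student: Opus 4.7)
My plan is to prove the five parts in the order (a), (e), (b), (c), (d), since each part beyond (a) uses the previous ones only lightly, and parts (a), (e), (b) are direct expansions of the defining formula $p = e_0(1-e_1)\ldots(1-e_n)$.

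For (a), the computation $e_0 p = p$ shows $\sigma(p) = e_0 \ge p$. For (e), given $p = e_0(1-e_1)\ldots(1-e_n)$ and $q = f_0(1-f_1)\ldots(1-f_m)$ with $pq \ne 0$, commutativity of $E$ lets me collect the leading idempotents, so $pq = e_0 f_0 (1-e_1)\ldots(1-f_m) \in \tilde E$ with leading coefficient $\sigma(p)\sigma(q)$. For (b), the crux is the identity $(ge_i g^*)(ge_j g^*) = g e_i e_j g^*$ in $\C \rtimes G$, which follows from commutativity of $E$ together with the inverse-semigroup axiom $g \cdot g^* g = g$. Expanding $gpg^* = \sum_{S \subseteq \{1,\ldots,n\}}(-1)^{|S|}\, g\bigl(e_0 \prod_{i \in S} e_i\bigr) g^*$ and applying this identity term by term collapses the sum to $(g e_0 g^*)(1 - g e_1 g^*) \ldots (1 - g e_n g^*)$, which presents $gpg^*$ in $\tilde E$ with leading coefficient $g\sigma(p)g^*$.

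The first clause of (c) is immediate from (a) applied to $p = \sigma_H^{-1}(e)$. For the second, my plan is to show that conjugation by $h$ sends $X_H$ into $X_H \cup \{0\}$, and then combine this with (b) and the injectivity of $\sigma_H$ from Lemma \ref{lemma11}. This preservation of minimality is the step I expect to be the main obstacle. Given $x \in X_H$ with $hxh^* \ne 0$, the element $xh^*h$ lies in $\tilde E_H$ below $x$, so minimality forces $xh^*h \in \{0,x\}$; but $xh^*h = 0$ would give $hxh^* = h(xh^*h)h^* = 0$, hence $xh^*h = x$, i.e.\ $x \le h^*h$. Then for any $q \in \tilde E_H$ with $0 < q \le hxh^*$ one has $q \le hh^*$, so $q = h(h^*qh)h^*$; and $h^*qh \le (h^*h)x(h^*h) = x$ sits in $\tilde E_H$ below $x$, so the same minimality-plus-nonvanishing argument gives $h^*qh = x$ and hence $q = hxh^*$. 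Thus $hxh^* \in X_H$, and applying (b) and the bijection $\sigma_H$ to $x = \sigma_H^{-1}(e)$ yields $h \sigma_H^{-1}(e) h^* = \sigma_H^{-1}(heh^*)$.

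Finally, for (d), I would read $H/H$ as the set of Green $\mathcal{R}$-classes of $H$ (i.e.\ $h \sim h'$ iff $hh^* = h'h'^*$), so that $[h] \mapsto hh^*$ is a bijection from $H/H$ onto $E_H$. Then
\[
\sum_{h \in H/H} \sigma_H^{-1}(hh^*) = \sum_{e \in E_H} \sigma_H^{-1}(e) = \sum_{x \in X_H} x = 1_H,
\]
using Lemma \ref{lemma11} for the middle equality and the fact that the atoms of the finite-dimensional commutative $*$-algebra generated by $\tilde E_H$ sum to its unit $1_H$.
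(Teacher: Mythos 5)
Your proof is correct and follows essentially the same route as the paper, which disposes of (a) and (e) as clear, proves (b) via the single identity $gpg^* = (ge_0g^*)(1-ge_1g^*)\cdots(1-ge_ng^*)$, derives (c) from (b), and proves (d) from the bijection $H/H \to E_H$, $[h]\mapsto hh^*$, together with Lemma \ref{lemma11}. Your only substantive addition is the verification in (c) that conjugation by $h$ carries $X_H$ into $X_H\cup\{0\}$ --- a step the paper compresses into ``(c) follows from (b)'' --- and this is precisely the detail needed to legitimately invoke the injectivity of $\sigma_H$, so it is a welcome expansion rather than a deviation.
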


\begin{proof}
(a) and (e) are clear.
(b) Since $g p g^* = g e_0 g^* (1- g e_1 g^*) \ldots (1-g e_n g^*)$.
(c) follow from (b).
(d) Note that every element of $E_H$ is of the form $h h^*$ for some $h \in H$, and $h_1 \equiv h_2$
for $h_i \in H$ if and only if $h_1 h_1^* = h_2 h_2^*$ (if and only if $h_1 = h_2 h_2^* h_1$).
Hence $\sum_{h \in H/H} \sigma^{-1}_H(h h^*) = \sum_{p \in X_H} p = 1_H$ by Lemma \ref{lemma11}.
\end{proof}

\section{The induction functor}

In this section we recall the definition of a $G$-algebra which is induced by an $H$-algebra
for a finite subinverse semigroup $H$ of $G$. We shall use a formally slightly modified but equivalent definition
than in \cite{burgiGreen},
see Corollary \ref{corollary1}. %and
The reason is the observation made in Lemma \ref{lemma14} that we may %can
change slightly a set called $G_H$.

%
%and show at the end of this section that we are consistent with the original definition.
%%but shall show at the end of this section that we stay consistent with the original definition. 

%\begin{corollary}

\begin{definition}		\label{defGH}
{\rm
Let $H \subseteq G$ be a finite subinverse semigroup.
%Instead of considering the associated groupoid $\calg_H$ and forming
%Then instead of the old $G_H$ we can use
We define $G_H \subseteq G$ as
$$G_H:= \{g \in G|\, g^*g \in H\}.$$
We endow $G_H$ with an equivalence relation: $g \equiv l$ ($g,l \in G_H$) if and only if $g h = l$ for
some $h \in H$ with $g^* g = h h^*$.
The set-theoretical quotient $G_H/\equiv$ is denoted by $G_H/H$.
}
\end{definition}
%\end{corollary}

\begin{definition}
{\rm
Let $c_0(G)$ be the usual commutative $C^*$-algebra of complex-valued functions on $G$ vanishing at infinity
($G$ being
discrete), endowed with the $G$-action $(k(f))(g) = f(k^* g) \, [k k^* \ge g g^*]$ for all $f \in c_0(G)$
and $k,g \in G$. This turns $c_0(G)$ to a $G$-algebra.
}
\end{definition}

%\begin{definition}
%{\rm
%We denote by $c_0(G_H) \subseteq c_0(G)$ the $G$-subalgebra of all functions vanishing outside of $G_H \subseteq G$.
We denote by $c_0(G_H)$ the $G$-subalgebra of $c_0(G)$ consisting of all functions vanishing outside of $G_H$.
%It is regarded as a $G$-subalgebra.
%
We similarly define $c_0(G_H/H)$ as the usual commutative $C^*$-algebra, % vanishing at infinity,
endowed with the $G$-action
%$k(1_{g H}) = 1_{k g H} \,[k^* k \ge g g^*]$ for all $k \in G, g \in G_H$,
$(k(f))(g H) = f(k^* g H) \, [k k^* \ge g g^*]$ for all $f \in c_0(G_H/H)$
and $k,g \in G$,
which turns it to a $G$-algebra.
%Both $c_0(G_H)$ and $c_0(G_H/H)$ are $G$-algebras.
%}
%\end{definition}

%\begin{definition}   \label{defInductionAlg}
%{\rm
%Let $H \subseteq G$ be a finite subinverse semigroup
%and $D$ a $H$-algebra.
%Define, similar as in \cite[\S 5 Def. 2]{kasparov1995}, 
%$$
%\mbox{Ind}_{H}^G(D)  = \{ f \in c_0(G_H) \, | \,\, \forall g \in G_H, \forall t \in H \mbox{ with } gt \in G_H: f(g t) = t^{*}(f(g)) \}.
%$$
%We regard $\mbox{Ind}_{H}^G(D) \subseteq c_0(G_H)$ as a $G$-subalgebra.
%}
%\end{definition}

\begin{definition}   \label{defInductionAlg}
{\rm
Let $H \subseteq G$ be a finite subinverse semigroup
and $D$ a $H$-algebra.
Define, similar as in \cite[\S 5 Def. 2]{kasparov1995},
\begin{eqnarray*}
\mbox{Ind}_{H}^G(D)  &:=&  \{ f: G_H \rightarrow D \, | \,\, \forall g \in G_H : \,
\forall h \in H \mbox{ with } g^*g = h h^*:   \\
&& \quad  f(g h) = \sigma^{-1}_H(h^* h) \,h^{*}(f(g)),
%&& f \mbox{ continuous},
%&&
%\qquad
\quad \|f(g)\| \rightarrow 0 \mbox{ for } g H \rightarrow \infty \,\} .  % \mbox{ in } G_H/H \, \}.
\end{eqnarray*}
It is a $C^*$-algebra under the pointwise operations and the supremum's norm
$\|f\|=\sup_{g \in G_H} \|f(g)\|$, and becomes a $G$-algebra
under the $G$-action
$(k( f))(h) := %[g^{*} h \in G_H ] \
f ( k^{*} g) \, [k k^* \ge g g^*]$
for all $k \in G, g \in G_H$ and $f \in \mbox{Ind}_H^G(D)$.
}
\end{definition}

%Of course, we may interpret $\mbox{Ind}_{H}^G(D)$ as a $G$-subalgebra of $c_0(G_H) \otimes D$ in the usual canonical way.

\begin{definition}
{\rm
Let $H \subseteq G$ be a finite subinverse semigroup.
By the universal property
of $KK^H$-theory \cite{burgiUniversalKK},
there exists an {\em induction functor} $\mbox{Ind}_H^G: KK^H \rightarrow KK^G$ induced by the functor
$F:C^*_H \rightarrow C^*_G$ given by $F(A) = \mbox{Ind}_H^G(A)$ for $H$-algebras $A$.
For more details see \cite{attempts}.
}
\end{definition}

%We recall the motivating theorem for induction from \cite{burgiDescent}.
%The inverse semigroup crossed product in the sense of Sieben will be denoted
%by $A \widehat \rtimes G$.

A key %motivating theorem
motivation for the definition of an induction algebra is the
following variant of Green's imprimitivity theorem \cite{green1978}.

\begin{theorem}[\cite{burgiGreen}]    \label{thm1}
Let $H \subseteq G$ be a finite subinverse semigroup and $A$ a $H$-algebra.
Then the $C^*$-algebras $\mbox{Ind}_H^G(A) \widehat \rtimes G$ and $A \widehat \rtimes H$
are Morita equivalent.
%In particular, they are %$KK$-equivalent.
%isomorphic in $KK$.
\end{theorem}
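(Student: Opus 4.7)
The plan is to exhibit an explicit Morita equivalence bimodule $\mathcal{E}$ between $\mbox{Ind}_H^G(A) \widehat\rtimes G$ and $A \widehat\rtimes H$, essentially patterned on Green's classical construction but with the idempotent bookkeeping corrected by $\sigma_H^{-1}$. Concretely, I would start from the pre-bimodule $\mathcal{E}_0$ of finitely supported (in $g H \in G_H/H$) functions $\xi : G_H \to A$ satisfying the same equivariance identity $\xi(g h) = \sigma_H^{-1}(h^*h)\, h^*(\xi(g))$ used to define $\mbox{Ind}_H^G(A)$, i.e.\ the ``$L^1$-sections'' of that bundle.

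Next I would equip $\mathcal{E}_0$ with the two module structures. The right action of $A \widehat\rtimes H$ should be defined on generators $a u_h$ (with $h\in H$, $a\in A$ satisfying Sieben's compatibility $a = \sigma_H^{-1}(hh^*)\, a$) by $(\xi \cdot a u_h)(g) := \sigma_H^{-1}(g^*g\, hh^*)\, \xi(g h)\, h(a)$ extended linearly; the left action of $\mbox{Ind}_H^G(A) \widehat\rtimes G$ on generators $f u_k$ should be the expected convolution $(f u_k \cdot \xi)(g) := f(g)\, k(\xi(k^*g))\, [k k^* \ge gg^*]$. One then defines a $\mbox{Ind}_H^G(A) \widehat\rtimes G$-valued left inner product and an $A \widehat\rtimes H$-valued right inner product by the two natural formulas
\begin{align*}
{}_{\mbox{\scriptsize Ind}}\langle \xi, \eta\rangle (k) &\;\;\text{supported on } k \text{ with suitable idempotent data, given pointwise by } \xi(g)\, k(\eta(k^*g)^*),\\
\langle \xi,\eta\rangle_{A\widehat\rtimes H}(h) &\;=\; \sum_{gH \in G_H/H} g^*\!\bigl(\xi(g)^*\eta(gh)\bigr),
\end{align*}
where the sum is finite by the support condition and the summand is independent of the chosen representative thanks to Lemma~\ref{lemma13}(c).

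The verification then splits into three essentially routine checks: (i) the two actions commute and satisfy the associativity identity ${}_{\mbox{\scriptsize Ind}}\langle \xi,\eta\rangle\cdot \zeta = \xi \cdot \langle\eta,\zeta\rangle_{A\widehat\rtimes H}$; (ii) both inner products are positive and full, fullness of the right inner product being immediate from taking $\xi,\eta$ supported on a single coset $gH$ with $g^*g=e$, and fullness of the left inner product following from the fact that functions supported at $k\in G$ with $kk^*\ge gg^*$ generate the relevant corner of $\mbox{Ind}_H^G(A)\widehat\rtimes G$; (iii) completeness after norming by either inner product yields the same $C^*$-module $\mathcal{E}$, which is then an imprimitivity bimodule.

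The main obstacle is not the algebra of the computation but making sure that every occurrence of $\sigma_H^{-1}$ lines up with Sieben's compatibility condition for $\widehat\rtimes$: because $A \widehat\rtimes H$ forces $e \in E_H$ to act by the projection corresponding to $\sigma_H^{-1}(e)$ on the fiber side, the inner products must be twisted by the very same factors that appear in Definition~\ref{defInductionAlg}. Lemma~\ref{lemma13}(b)--(d) is exactly what guarantees these twists are internally consistent (they combine correctly under multiplication in $H$ and sum to $1_H$), so the heart of the argument is to check, term by term, that replacing the group-theoretic projections of the classical Green construction with $\sigma_H^{-1}(hh^*)$ preserves all identities; this is where the inverse-semigroup case genuinely differs from \cite{green1978} and is the step that \cite{burgiGreen} carries out in detail.
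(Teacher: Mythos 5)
You should first be aware that the paper does not prove this theorem at all: it is imported verbatim from \cite{burgiGreen}, and the only work the paper itself does in its vicinity is Lemma \ref{lemma14} and Corollary \ref{corollary1}, which establish that the slightly modified definition of $\mbox{Ind}_H^G(A)$ used here is $G$-equivariantly isomorphic to the one in \cite{burgiGreen}, so that the cited imprimitivity theorem transfers. Your proposal instead attempts to reconstruct the Morita equivalence directly, i.e.\ to reprove the external result. That is a legitimate and genuinely different route, and your overall architecture (a Green-type pre-imprimitivity bimodule of sections $G_H \to A$ with the $\sigma_H^{-1}$-twisted equivariance, two actions, two inner products, the associativity identity, fullness, completion) is the standard and correct skeleton for such a theorem.

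Judged as a proof, however, the proposal has concrete gaps. First, the left $\mbox{Ind}_H^G(A)\widehat\rtimes G$-valued inner product is never actually written down; the phrase ``supported on $k$ with suitable idempotent data'' is precisely the place where the inverse-semigroup bookkeeping (which elements of $\tilde E$ or $E$ must multiply the coefficient $\xi(g)\,k(\eta(k^*g)^*)$ so that the result lands in Sieben's crossed product) has to be made explicit, and without it neither the associativity identity (i) nor fullness in (ii) can be checked. Second, the well-definedness of the right inner product on cosets is subtler than you indicate: the equivariance condition of Definition \ref{defInductionAlg} only relates $\xi(gh)$ to $\xi(g)$ when $g^*g = hh^*$ \emph{exactly}, so replacing $g$ by a representative $gk$ forces you to re-decompose $gkh$ as a product satisfying that constraint, and Lemma \ref{lemma13}(c) alone does not obviously deliver the required cancellation. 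Third, positivity of both inner products --- the genuinely hard analytic core of any imprimitivity theorem --- is merely asserted. You correctly identify that the Sieben-compatibility of all the $\sigma_H^{-1}$ twists is where the real content lies, but that identification is a statement of the problem rather than its solution; it is exactly the part that \cite{burgiGreen} has to carry out and that your sketch defers. So the proposal is a reasonable plan for an independent proof, but it is not yet one, and it does not engage with the reduction (via Corollary \ref{corollary1}) that the paper actually uses.
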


In \cite{burgiGreen} we defined $G_H$ and %accordingly
$\mbox{Ind}_H^G(D)$ slightly differently.
%But they differ to the above definitions just in notation of $G_H$
But as already mentioned, both definitions are equivalent.
To explain this in detail,
re-denote $G_H$ of \cite{burgiGreen} as $G_H'$. Define $\calg$ as the finite groupoid associated to $H$.
That is,
one sets $\calg^{(0)} := %\widetilde{(E_H)}
X_H \subseteq \tilde E_H \subseteq \tilde G$ %(the common refinements of projections in $H$, or
%(the minimal nonzero projections
%of $\tilde H$) and
and
% =\{p \in \tilde G|\, p=e_0(1-e_1) \ldots
$\calg= \{h p \in \tilde G|\,h \in H, p \in \calg^{(0)}\} \backslash \{0\} \subseteq \tilde G$.

%One sets %It is given by
%Recall that one then sets
One then sets
$G_H' %= \{g p \in \tilde G\,|\, g \in G,\, p \in H^{(0)},\, g^* g \ge p\}$.
= \{t \in \tilde G\,|\, t^* t \in \calg^{(0)}\}$. % (see \cite{burgiDescent}).
An equivalence relation on $G_H'$ is given by $t \equiv s$ ($t,s \in G_H'$) if and only if
there exists an $h \in \calg$ (equivalently: $h \in H$) such that $t h = s$.

\begin{lemma}   \label{lemma14}
The map $\delta: G_H'  \rightarrow G_H$ given by $\delta (g p) = g \sigma(p)$ ($g \in G, p \in \calg^{(0)}$)
is a bijection which respects the equivalence relations %back and forth.
in both directions.
%One has $\delta^{-1}(g ) = gp$ with unique $p \in $\sigma(p) = g^* g$
%It and its inverse map respects the equivalence relations.
% which induces a bijection $G_H'/\equiv \rightarrow G_H/\equiv$.
%Two elements in $G_H'$ are equivalent if their images under $\delta$ are equivalent.

The inverse map is given by
$\delta^{-1}(g) = g \sigma^{-1}_H(g^* g)$ for all $g \in G_H$.
\end{lemma}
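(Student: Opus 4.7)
The plan is to verify the two stated formulas define mutually inverse maps between $G_H'$ and $G_H$, and then check that this bijection intertwines the two equivalence relations. The inputs come entirely from the preceding lemmas: multiplicativity and conjugation behaviour of $\sigma$ (Lemma \ref{lemma13}), the bijection $\sigma_H : X_H \to E_H$ (Lemma \ref{lemma11}), and most crucially the property that any $f \in E$ with $f \geq p$ automatically satisfies $f \geq \sigma(p)$ (Lemma \ref{lemma12}).

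For bijectivity, given $g \in G_H$, Lemma \ref{lemma13}(c) gives $\sigma^{-1}_H(g^*g) \leq g^*g$, so $\delta^{-1}(g) = g\sigma^{-1}_H(g^*g)$ has $(\delta^{-1}(g))^*(\delta^{-1}(g)) = \sigma^{-1}_H(g^*g) \in \calg^{(0)}$ and thus lies in $G_H'$. Conversely, for a nonzero $t = gp \in G_H'$ with $p \in \calg^{(0)}$, the inequality $t^*t = pg^*gp \leq p$ together with $t^*t \in X_H$ and minimality of $X_H$ force $t^*t = p$; hence $g^*g \geq p$, which Lemma \ref{lemma12} upgrades to $g^*g \geq \sigma(p)$. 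Consequently $(g\sigma(p))^*(g\sigma(p)) = \sigma(p)g^*g\sigma(p) = \sigma(p) \in E_H \subseteq H$, placing $\delta(t) = g\sigma(p)$ in $G_H$. Mutual inversion then reduces to the one-line computations $\delta(\delta^{-1}(g)) = g\sigma(\sigma^{-1}_H(g^*g)) = g\cdot g^*g = g$ and $\delta^{-1}(\delta(t)) = g\sigma(p)\cdot\sigma^{-1}_H(\sigma(p)) = g\sigma(p)p = gp = t$, using $\sigma(p)p = p$ because $p \leq \sigma(p)$; the latter also gives well-definedness of $\delta$ against the choice of decomposition.

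For equivalence compatibility, suppose $t_1h = t_2$ in $G_H'$ with $t_i = g_ip_i$, $p_i = t_i^*t_i \in X_H$, $h \in H$. Direct computation yields $p_2 = h^*p_1h$ and, by Lemma \ref{lemma13}(b), $\sigma(p_2) = h^*\sigma(p_1)h$. Nonvanishing of $t_2$ forces $p_1hh^* \neq 0$, whence $p_1 \leq hh^*$ by minimality, and since $hh^* \in E_H$ Lemma \ref{lemma12} strengthens this to $hh^* \geq \sigma(p_1)$. Setting $h' := \sigma(p_1)h \in H$ then gives $h'h'^* = \sigma(p_1)hh^*\sigma(p_1) = \sigma(p_1) = \delta(t_1)^*\delta(t_1)$ and $\delta(t_1)h' = g_1\sigma(p_1)h = g_1hh^*\sigma(p_1)h = g_1h\sigma(p_2) = \delta(t_2)$, so $\delta(t_1) \equiv \delta(t_2)$ in $G_H$. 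The converse is parallel: given $g_1h = g_2$ in $G_H$ with $hh^* = g_1^*g_1$, setting $p_i := \sigma^{-1}_H(g_i^*g_i)$ and using Lemma \ref{lemma13}(c) yields $p_2 = h^*p_1h$, and the identity $p_1h = hh^*p_1h = hp_2$ (valid since $p_1 \leq g_1^*g_1 = hh^*$) gives $\delta^{-1}(g_1)\cdot h = g_1hp_2 = g_2p_2 = \delta^{-1}(g_2)$.

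The main obstacle is the equivalence step. The $G_H$-relation carries a strict side condition $hh^* = g^*g$ that the $G_H'$-relation does not, so a naive transport of the transition element $h$ fails. Lemma \ref{lemma12} is precisely the tool that converts the weak compatibility $hh^* \geq p_1$ (automatic from $t_1h \neq 0$) into the strict $hh^* \geq \sigma(p_1)$ needed to form $h' = \sigma(p_1)h$ with matching source on the $G_H$ side.
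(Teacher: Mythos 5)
Your proof is correct and follows essentially the same route as the paper's: the key inequality $g^*g \ge \sigma(p) \ge p$ obtained from minimality of $X_H$ plus Lemma \ref{lemma12}, the mutual-inversion computations, and the transported transition element $h' = \sigma(p)h$ for the forward compatibility of the equivalence relations. If anything, you are slightly more explicit than the paper (e.g.\ justifying $hh^* \ge p_1$ via minimality and checking that $\delta$, $\delta^{-1}$ land in the correct sets), so no further changes are needed.
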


\begin{proof}
Given $g p \in G_H'$ we observe that
\begin{equation}    \label{equ22}
g^* g \ge \sigma(p) \ge  p .
\end{equation}
Indeed, notice that $g^* g \ge p$ because by definition $(g p)^*(g p)$ is in $X_H$ and so can only be $p$.
Thus by Lemma \ref{lemma12}, $g^* g \ge \sigma(p)$.
It is then straightforward to check with Lemma \ref{lemma12} that $\delta$ and $\delta^{-1}$ are inverses to each other.

%Let us check surjectivity.
%%We begin with surjectivity.
%Given $g \in G_H$ %choose a $p \in \calg^{(0)}$ with
%% $p \le g^* g \in H$.
%define $p \in \calg^{(0)}$ as $p = \sigma_H^{-1}(g^*g)$
%%$\sigma(p) = g^*g$
%by Lemma \ref{lemma11}. % \in E_H$.
%Then $\delta(gp) = g$. %, which proves surjectivity.
%%Injectivity:
%For checking injectivitiy,
%assume that $\delta(gp) = g \sigma(p) = g' \sigma(p') = \delta(g' p')$.
%Notice that $g^* g \ge p$ because by definition $(g p)^*(g p)$ is in $X_H$ and so can only be $p$.
%Thus by Lemma \ref{lemma12}, $g^* g \ge \sigma(p)$. Hence $(g \sigma(p))^*(g \sigma(p)) = \sigma(p) =
%\sigma(p')$, which implies $p = p'$ by Lemma \ref{lemma11}.(b).
%Consequently $\delta(gp) p = g p = g' p'$ as $\sigma(p) \ge p$.

We are going to discuss the equivalence relations.
%For the equivalence relations
Suppose that $g p \equiv g' p'$ in $G_H'$. Then there exists a $h \in H$
such that $g p h = g' p'$, or $g h (h^* p h) = g' p'$,
and $h h^* \ge p$ %(recall that $g^* g \ge \sigma(p) \ge  p$).
(recalling (\ref{equ22})).
Applying $\delta$, we get $g h \sigma(h^* p h) = g  \sigma(p) h = g' \sigma(p)$
with Lemma \ref{lemma13}.
In other words, $\delta(g p) h'  = \delta (g' p')$ for $h':=\sigma(p) h \in H$.
By Lemma \ref{lemma12}, $h h^* \ge \sigma(p)$, and thus $h' {h'}^* = \delta(gp) \delta(gp)^*$.
Hence $\delta(g p) \equiv \delta (g' p')$ in $G_H$.

%If $g \equiv g'$ in $G_H$ then there exists a $h \in H$ with $g h= g'$ and $g^*g = h h^*$.
%Hence 
%$$\delta^{-1}(gh) =gh \sigma_H^{-1}(h^* h) h^* h = g \sigma_H^{-1}(h h^*) h =  \delta^{-1}(g')
%= g' \sigma_H^{-1}(h h^*)$$

If $g \equiv g'$ in $G_H$ then there exists a $h \in H$ with $g h= g'$ and $g^*g = h h^*$.
Recall that $\delta^{-1}(g ) = gp$ with $\sigma(p)=g^* g$.
% by what we said in the surjectivity proof. %we said above. %in the surejctivity argument.
Hence $\delta^{-1}(gh) = \delta^{-1}(g') = gh p = g' p'$ with $h^* h = \sigma(p)$ and $g'^* g' = \sigma(p')$.
Notice that $g^*g = h h^* = \sigma( h p h^*)$
with Lemma \ref{lemma13}. Consequently $\delta^{-1}(g) h = g h p h^* h  = \delta^{-1}(g')$.
Hence $\delta^{-1}(g) \equiv \delta^{-1}(g')$ in $G_H'$.
%
%Mit surjectivity proof annahme $\delta(g p)= g$ könnte man beweis etwas vereinfachen.
\end{proof}

Let us re-denote the induction algebra $\mbox{Ind}_H^G(D)$ of \cite{burgiGreen} % {attempts}
as $\mbox{Ind}_H^G(D)'$.

\begin{corollary}   \label{corollary1}
There is a $G$-equivariant isomorphism $\mbox{Ind}_H^G(A) \rightarrow \mbox{Ind}_H^G(A)'$.
%
%The induction algebra $\mbox{Ind}_H^G(D)$ of Definition \ref{defInductionAlg}
%is canonically isomorphic with the algebras of the papers \cite{} under the same name.
\end{corollary}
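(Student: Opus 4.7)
The plan is to construct the isomorphism by pullback along the bijection $\delta:G_H' \to G_H$ from Lemma \ref{lemma14}. Concretely, for $f \in \mbox{Ind}_H^G(A)$ I set $\Phi(f) := f \circ \delta \in \mbox{Ind}_H^G(A)'$, with candidate inverse $\Phi^{-1}(f') := f' \circ \delta^{-1}$, where $\delta^{-1}(g) = g \sigma^{-1}_H(g^* g)$. Since $\delta$ is a set-theoretic bijection, the pointwise $C^*$-operations transport automatically, and the supremum norm is preserved; the $c_0$-vanishing condition transports because $\delta$ descends to a bijection $G_H'/H \to G_H/H$ by the second assertion of Lemma \ref{lemma14}.

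The first substantive step is to verify that $\Phi(f)$ satisfies the equivariance condition defining $\mbox{Ind}_H^G(A)'$ from \cite{burgiGreen}. Given $t = g p \in G_H'$ and $h \in H$ compatible with $t$, Lemma \ref{lemma14} produces an element $h' = \sigma(p) h \in H$ with $\delta(t h) = \delta(t) h'$ and the appropriate source/range compatibility (using Lemma \ref{lemma13}(b),(c)); rewriting the equivariance identity from Definition \ref{defInductionAlg} applied to $f$ at $(\delta(t), h')$ and using $\delta^{-1}(g) = g \sigma_H^{-1}(g^* g)$ to move the factor $\sigma^{-1}_H((h')^* h')$ back through the identification yields precisely the $H$-equivariance condition that the old definition imposes on $\Phi(f)$. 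Conversely, the analogous calculation shows that $\Phi^{-1}(f')$ satisfies the new equivariance condition; this is exactly where the otherwise mysterious factor $\sigma^{-1}_H(h^* h)$ in Definition \ref{defInductionAlg} appears naturally.

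For $G$-equivariance of $\Phi$, note that the $G$-actions on both induction algebras are given by the same translation formula $(k(f))(\cdot) = f(k^* \cdot)\,[kk^* \ge (\cdot)(\cdot)^*]$, and $\delta$ intertwines the left $G$-translation actions on $G_H'$ and $G_H$: from $\delta(k g p) = k g \sigma(p) = k \cdot \delta(gp)$ one obtains $\Phi(k(f)) = k(\Phi(f))$.

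The only nontrivial step, and the main place where Lemmas \ref{lemma12}--\ref{lemma14} are all used together, is the translation of the two equivariance relations into each other; everything else is bookkeeping. Once that matching is done, the two maps $\Phi$ and $\Phi^{-1}$ are mutually inverse $G$-equivariant $*$-homomorphisms by Lemma \ref{lemma14}, yielding the desired $G$-equivariant isomorphism.
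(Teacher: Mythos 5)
Your proposal matches the paper's proof: both define the isomorphism as the pullback $f \mapsto f \circ \delta$ along the bijection of Lemma \ref{lemma14} and verify the defining $H$-equivariance relation of the target algebra via Lemmas \ref{lemma12} and \ref{lemma13}. The only point you pass over quickly is that $G$-equivariance also requires the indicator conditions $[kk^* \ge tt^*]$ and $[kk^* \ge \delta(t)\delta(t)^*]$ to agree, which is precisely the observation $f \ge g p g^* \Leftrightarrow f \ge g\sigma(p)g^*$ from Lemma \ref{lemma12} that the paper makes explicit.
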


\begin{proof}
%The definitions in \cite{} are analog. One just uses $G_H'$ instead of $G_H$.
%By Lemma \ref{lemma14} this does not matter.
%The isomorphism is given by

The isomorphism $\varphi$ is given by
$\varphi(f)(t) = %(\sigma^{-1}(g^* g))(
f(\delta(t))$
for all $f \in \mbox{Ind}_H^G(A)$ and $t \in G_H'$.
%Well-defined:
To see that it is well-defined,
consider $t = gp$ and $h = k p \in \calg$ for $g \in G, p \in X_H, k \in H$.
Note that $g^*g, k k^* \ge p$ and thus $g^* g , k k^* \ge \sigma(p)$ by Lemma \ref{lemma12}.
Then
$(\varphi(f))(t h) = f( \delta(th)) = f( \delta(g k k^* p k)) = f(g k \sigma(k^* p k)) 
= f ( g \sigma(p) k) = \sigma_H^{-1}(k^* \sigma(p) k) k^*  \sigma(p) f( g \sigma(p))
= k^* p f(\delta(t)) = h^*(\varphi(f)(t))$
by Lemma \ref{lemma13}.

%Surjectivity: Set in the last computation $h=p$ and $k=1$.
Surjectivity may be observed by setting $h=p$ and $k=1$ in the last computation.
%observed similar.
For the $G$-invariance just notice that an $f \in E$ satisfies $f \ge g p g^*$ if and only if $f \ge \sigma(g p g^*)= g \sigma(p) g^*$
by Lemma \ref{lemma12}.
\end{proof}

\section{The $\ell^2(G)$-space}

In this section we will shall define %what is meant by a
fibered $G$-algebras and 
%and define
an $\ell^2(G)$-space as a tool for working with such algebras.

\begin{definition}		%[$\varepsilon(E)$]
{\rm
Let $\varepsilon(E)$ denote the commutative $C^*$-algebra $c_0(E)$
%of continuous complex-valued functions on $E$
%%$f:E \rightarrow \C$
%vanishing at infinity
%($E$ being regarded to be discrete).
($E$ being discrete).
We turn $\varepsilon(E)$ to a $G$-algebra by setting
$(g(f))(e) = f(g^* e g)\,[g g^* \ge e]$ for all $g \in G$, $f \in \varepsilon(E)$ and $e \in E$.
}
\end{definition}

Equivalently we may define the $G$-action by $g(1_e) = 1_{g e g^*} [g g^* \ge e]$ for all $e \in E$ ($1_e := 1_{\{e\}} \in \varepsilon(E)$).
The algebra $\varepsilon(E)$ will be used as a replacement for $\C$ as utilized in group equivariant $C^*$-theory, see Lemma \ref{lemma1}.

\begin{remark}    \label{remark2}
{\rm
Heuristically and even exactly if we like, %the reader should
we view the characteristic function $1_e\in \varepsilon(E)$ as the characteristic function $1_{\varepsilon_e} \in \C^X$
of the point $\varepsilon_e \in X$.
%, where $\varepsilon_e:C^*(E) \rightarrow \C$ is the character defined
%in ... and
%%where
%$\C^X$ denotes the set of complex-valued functions on $X$ endowed with its natural $G$-action.
%In this sense, $\varepsilon(E)$ is a %subspace of
%$G$-equivariant subalgebra of
%$\C^X$.
In other words, $\varepsilon(E)$ is $G$-equivariantly $*$-isomorphic to the
$G$-invariant $G$-subalgebra of
$\C^X$ generated by the simple functions $1_{\varepsilon_e}$ via the map $1_e \mapsto 1_{\varepsilon_e}$.

%This embedding is a $G$-algebra embedding, when $F(X)$ is endowed with the $G$-action
%$(k(f))(x)= f(k x k^*)\, [k^* k \ge x]$ for $f \in F(X)$. This is all heuristical and will not be used.
Fibers of a $C_0(X)$-algebra $A$ may be computed by
$$A_{\varepsilon_e} = \C\{1_e\} \otimes^{C_0(X)} A.$$

Consequently
\begin{equation}    \label{dsum}
\bigoplus_{e \in E} A_{\varepsilon_e} = \varepsilon(E) \otimes^{C_0(X)} A.
\end{equation}
}
\end{remark}

\begin{lemma}     \label{lemma2}
$\varepsilon(E)$ is a $G$-algebra.
\end{lemma}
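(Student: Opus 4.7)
The plan is to check the three defining conditions of a $G$-algebra: (a) each $\alpha_g:\varepsilon(E)\to\varepsilon(E)$ is a $*$-endomorphism; (b) $g\mapsto\alpha_g$ is a semigroup homomorphism; and (c) $\alpha_e(a)b = a\,\alpha_e(b)$ for every $e\in E$ and $a,b\in\varepsilon(E)$.

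For (a) I would reason pointwise from the defining formula $(g(f))(e)=f(g^*eg)\,[gg^*\ge e]$. Because $[gg^*\ge e]\in\{0,1\}$ is real and idempotent, linearity, multiplicativity, and $*$-preservation of $\alpha_g$ are inherited from the pointwise algebra structure on $\C^E$. The only substantive point is that $\alpha_g$ preserves the $c_0$-condition; here I would use the fact that $e\mapsto g^*eg$ restricts to a bijection between $\{e\in E:e\le gg^*\}$ and $\{f\in E:f\le g^*g\}$ with inverse $f\mapsto gfg^*$, both directions being immediate from $gg^*g=g$ and the commutativity of the idempotent semilattice. Thus the support of $\alpha_g(f)$ injects into that of $f$, and $\alpha_g(f)\in c_0(E)$ whenever $f\in c_0(E)$.

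For (b) a pointwise computation yields
$$\alpha_{gh}(f)(e) = f(h^*g^*egh)\,[ghh^*g^*\ge e] \quad\text{and}\quad \alpha_g(\alpha_h(f))(e) = f(h^*g^*egh)\,[gg^*\ge e]\,[hh^*\ge g^*eg],$$
so the semigroup property is equivalent to the Boolean identity $[ghh^*g^*\ge e]=[gg^*\ge e]\cdot[hh^*\ge g^*eg]$. For the direction $(\Leftarrow)$ I would conjugate the hypothesis $hh^*(g^*eg)=g^*eg$ by $g$ and reduce, using $gg^*\ge e$ and $gg^*g=g$, to obtain $(ghh^*g^*)e=e$. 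For $(\Rightarrow)$ I would first note the inequality $gg^*\ge ghh^*g^*$ (a consequence of $gg^*g=g$), which forces $gg^*\ge e$ for free, and then substitute $e=ghh^*g^*e$ into $g^*eg$ to get $g^*eg=(g^*g)(hh^*)(g^*eg)$, which in turn gives $g^*eg\le hh^*$ by the commutativity of the semilattice.

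For (c), the observation is that $\alpha_e$ is simply pointwise multiplication by the characteristic function $\chi_{\le e}$ of $\{f\in E:f\le e\}$, which sits in $\calm(\varepsilon(E))=\ell^\infty(E)$. Commutativity of $\varepsilon(E)$ then yields $\alpha_e(a)\,b = \chi_{\le e}\,a\,b = a\,\chi_{\le e}\,b = a\,\alpha_e(b)$. The main obstacle is the bookkeeping behind the Boolean identity in step (b); once $gg^*g=g$ and the commutativity of the idempotent semilattice $E$ are used carefully, everything else reduces to routine verification.
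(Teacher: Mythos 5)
Your proposal is correct and follows essentially the same route as the paper: the paper's proof verifies the semigroup property via exactly the Boolean identity $[ghh^*g^*\ge e]=[gg^*\ge e]\,[hh^*\ge g^*eg]$ (using $gg^*g=g$ and commutativity of idempotents) and checks $e_1(f_1)f_2=f_1\cdot e_1(f_2)$ pointwise, which is your step (c) in disguise. Your additional verification that each $\alpha_g$ is a $*$-endomorphism preserving the $c_0$-condition (via the bijection $e\mapsto g^*eg$, which is the paper's Lemma \ref{lemma3}) is routine bookkeeping the paper leaves implicit, and it is carried out correctly.
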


\begin{proof}
We have $(gh(f))(e)= f(h^* g^* e g h)\,[g h h^* g^* \ge e]$ and $(g (h(f)))(e) = f(h^* g^* e g h)\, [g g^* \ge e]
\, [h h^* \ge g^* e g]$ for $g,h \in G, e \in E$ and $f \in \varepsilon(E)$.
Now $g h h^* g^* \ge e$ implies $h h^* \ge g^* g h h^* g^* g \ge g^* e g$ and $g g^* \ge g h h^* g^* \ge e$
and similarly reversely. Hence $(gh)(f)= g (h (f))$.
Further, $(e_1(f_1) f_2)(e_2)= f_1(e_1 e_2) f_2(e_2) \,[e_1 \ge e_2] = (f_1 \cdot e_1(f_2))(e_2)$
for $e_i \in E$.
\end{proof}

%\section{}

\begin{lemma}   \label{lemma3}
Let $h \in G$.
There are bijections
\begin{eqnarray*}
&& \{ g \in G|\, g g^* = h^* h \} \rightarrow \{ g \in G| \, g g^* = h h^* \} : g \mapsto h g , \\
&& \{ g \in G|\, g g^* \le h^* h \} \rightarrow \{ g \in G| \, g g^* \le h h^* \} : g \mapsto h g .
\end{eqnarray*}
Similar things can be said for the right multiplication $g \mapsto g h$.
\end{lemma}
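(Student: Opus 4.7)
The plan is to exhibit the inverse map explicitly in each case; the inverse of left multiplication by $h$ will be left multiplication by $h^*$, and similarly for right multiplication. The whole lemma then reduces to two routine verifications per case: well-definedness of the candidate maps, and the fact that their two compositions are the identity.

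The engine for everything is the inverse-semigroup identity $g g^* g = g$, together with the standard order characterization that for idempotents $e,f$ one has $e \le f$ iff $ef = e$. For the equality bijection I would first note that $(hg)(hg)^* = h(gg^*)h^* = h(h^*h)h^* = hh^*$ whenever $gg^* = h^*h$, so $g \mapsto hg$ lands in the correct set; then compute $h^*(hg) = (h^*h)g = gg^*g = g$, which shows that $g \mapsto h^*g$ on the right-hand set inverts $g \mapsto hg$, with the opposite composition being symmetric.

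For the inequality bijection the same two maps $g \mapsto hg$ and $g \mapsto h^*g$ will work. The only sub-step deserving attention is the observation that $gg^* \le hh^*$ forces $hh^*g = g$: from $hh^* gg^* = gg^*$ one obtains $hh^*g = hh^* g g^* g = gg^* g = g$. Well-definedness (that $(hg)(hg)^* \le hh^*$) and mutual inverseness then follow as in the equality case. The right-multiplication variants are obtained from the left-multiplication ones by applying the involution $*$ throughout, swapping the roles of $hh^*$ and $h^*h$ in the constraints on $g^*g$, so no additional argument is needed. I do not anticipate any real obstacle; the proof is essentially bookkeeping with the axioms of an inverse semigroup.
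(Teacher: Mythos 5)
Your proof is correct: the explicit inverse $g \mapsto h^* g$ together with the identities $(h^*h)g = gg^*g = g$ (from $gg^* \le h^*h$) and $h^*hh^* = h^*$ is exactly the routine verification needed, and the involution trick handles the right-multiplication variant. The paper states this lemma without any proof, treating it as elementary bookkeeping, so your write-up simply supplies the standard argument the author omitted.
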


%\section{}

\begin{definition}		\label{definition1}   % [$\ell^2(G)$]
{\rm
%Let $\ell^2(G)$ be the usual $\ell^2$-Hilbert space with orthonormal base the set $G$.

%Let $\ell^2(G)$ denote the space of functions $\xi$ satisfying 

Let $c_c(G)$ denote the linear space consisting of functions $G \rightarrow \C$ with finite support.
% ($G$ is discrete).
We turn $c_c(G)$ to a right $\varepsilon(E)$-module by setting %through
$$(\xi  f)(g)= \xi(g) f(g g^*)$$ 
for all $\xi \in c_c(G), f \in c_0(E)$ and $g \in G$.
This module is endowed with an %$c_0(\varepsilon_E)$
$\varepsilon(E)$-valued inner product %$c_c(G) \times c_c(G) \rightarrow c_0(E)$
given by
$$\langle \xi, \eta \rangle (e)= \sum_{g \in G, g g^* =e} \overline{\xi(g)} \eta(g) . $$  %\, \varepsilon_e$$ 

%$$\langle \xi, \eta \rangle (\varepsilon_e)= \sum_{g \in G, \varepsilon_{g g^*} = \varepsilon_e} \overline{\xi(g)} \eta(g) $$  %\, \varepsilon_e$$ 

The space $c_c(G)$ will be equipped with the $G$-action
$$(h \xi)(g)= \xi (h^* g) \, [ h h^* \ge g g^*]$$
for all $\xi \in c_c(G)$ and $h,g \in G$.

The closure of $c_c(G)$ under the norm induced by the inner product is a $G$-Hilbert %$c_0(E)$-module
%which is
$\varepsilon(E)$-module
denoted by $\ell^2(G)$.
}
\end{definition}

\begin{lemma}
The space $\ell^2(G)$ is a $G$-Hilbert $\varepsilon(E)$-module.

\end{lemma}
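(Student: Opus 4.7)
The plan is to verify the defining axioms of a $G$-Hilbert $\varepsilon(E)$-module on the algebraic object $c_c(G)$ and then extend to $\ell^2(G)$ by continuity; completeness is automatic since $\ell^2(G)$ is by definition the completion of $c_c(G)$ under the induced norm. There are four bundles of axioms to check: (i) $c_c(G)$ is a right pre-Hilbert $\varepsilon(E)$-module, that is, the right action and $\varepsilon(E)$-valued inner product satisfy the usual algebraic identities; (ii) the inner product is positive definite; (iii) the $G$-action is a semigroup action, compatible with the module operations and with the inner product; and (iv) the idempotent compatibility $e(\xi)\,f = \xi \,e(f)$ holds for all $e \in E$, $\xi\in c_c(G)$, $f\in\varepsilon(E)$.

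For (i) and (ii) all identities reduce to simple manipulations on each fiber $\{g \in G : g g^*=e\}$: sesquilinearity, the symmetry $\langle\xi,\eta\rangle^*=\langle\eta,\xi\rangle$, and $\langle\xi,\eta f\rangle=\langle\xi,\eta\rangle f$ are immediate from the definition. Positivity follows from
$$\langle\xi,\xi\rangle(e)=\sum_{g\in G,\, g g^*=e}|\xi(g)|^2 \ge 0,$$
which is a non-negative function on $E$, hence a positive element of $\varepsilon(E)=c_0(E)$; definiteness is clear since $\langle\xi,\xi\rangle=0$ forces $\xi(g)=0$ for every $g$. For (iii), the equivariance of the inner product $\langle h\xi,h\eta\rangle=h(\langle\xi,\eta\rangle)$ follows by applying Lemma~\ref{lemma3} to reindex the defining sum via the bijection $g\mapsto h^*g$, which matches $(h(\langle\xi,\eta\rangle))(e)=\langle\xi,\eta\rangle(h^*eh)\,[hh^*\ge e]$ on the nose; compatibility $h(\xi f)=(h\xi)(h(f))$ is an analogous direct computation. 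For (iv), one observes that $eg = g$ whenever $e \ge gg^*$, which collapses $(e(\xi))(g)\,f(gg^*)$ to $\xi(g)\,(e(f))(gg^*)$ as desired.

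The one place where a little genuine care is needed is the semigroup identity $(h_1 h_2)(\xi)=h_1(h_2(\xi))$. Unfolding the definitions produces, on the right-hand side, two nested Iverson brackets $[h_1 h_1^*\ge gg^*]\,[h_2 h_2^*\ge h_1^* gg^* h_1]$, which must be shown equivalent to the single bracket $[h_1 h_2 h_2^* h_1^*\ge gg^*]$ appearing on the left. One direction uses $h_1 h_1^*\ge h_1 h_2 h_2^* h_1^*$ together with conjugation by $h_1^*$; the converse uses $h_1 h_1^* gg^* = gg^*$ to sandwich $gg^*$ by $h_1 h_1^*$ in the inequality $h_2 h_2^*\ge h_1^* gg^* h_1$. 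This inverse-semigroup bookkeeping is the main obstacle in an otherwise mechanical proof. Once all identities are established on $c_c(G)$ they extend by continuity and density to $\ell^2(G)$, showing that $\ell^2(G)$ is a $G$-Hilbert $\varepsilon(E)$-module.
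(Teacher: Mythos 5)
Your proposal is correct and follows essentially the same route as the paper: the paper likewise dismisses positivity and the module axioms as routine, refers to the proof of Lemma~\ref{lemma2} for exactly the Iverson-bracket semigroup identity you single out, and extends the $G$-action to $\ell^2(G)$ by a norm estimate via Lemma~\ref{lemma3} (which your equivariance of the inner product, $\langle h\xi,h\eta\rangle=h(\langle\xi,\eta\rangle)$, delivers equivalently since $G$ acts by contractions on $\varepsilon(E)$).
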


\begin{proof}
It is obvious that the inner product is positive definite.
The module structure is straightforward to check, and for the %validity
admissibility of the $G$-action confer the proof of Lemma \ref{lemma2}.
% that the $G$-action is indeed
%one goes like in Lemma \ref{lemma2}.
It is well-known that $\ell^2(G)$ is consequently a Hilbert $\varepsilon(E)$-module.
Also the $G$-action extends to $\ell^2(G)$ by continuity of linear operators:
$$\|h(\xi)\|^2_{c_c(G)}
= \sup_{e \in E} \|  \sum_{g \in G, g g^* =e} \overline{\xi(h^* g)} \eta(h^* g) \,[ h h^* \ge e]\|
\le \|\xi\|^2_{c_c(G)}$$
by Lemma \ref{lemma3}.
\end{proof}

%Kann auch altes model $\ell^2(G)$ nehmen, ohne $E$-stetig. Ist halt kein $C_0(X)$-module
%aber halt $\ell^{\infty}(X)$-modul.

%\begin{definition}
%{\rm
%Analogously as
Similarly to $\ell^2(G)$ we may define a $G$-Hilbert $\varepsilon(E)$-module $\ell^2(G_H/H)$.
(It may be regarded as the submodule of $\ell^2(G)$ consisting of all functions vanishing outside of $G_H$ and
being constant on equivalence classes.)
%}
%\end{definition}

\begin{lemma}   \label{lemma1}
There
are $G$-equivariant $*$-isomorphisms
$$A \otimes^{C_0(X)} \varepsilon(E) \cong  A $$
for the $G$-algebras $A=\varepsilon(E), c_0(G),c_0(G_H),c_0(G_H/H), \mbox{Ind}_H^G(D)$.
\end{lemma}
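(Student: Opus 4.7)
The proof starts from equation (\ref{dsum}) of Remark \ref{remark2}, which already identifies
\[
A \otimes^{C_0(X)} \varepsilon(E) \;=\; \bigoplus_{e \in E} A_{\varepsilon_e}.
\]
Thus the task reduces to producing, for each listed $A$, a $G$-equivariant $*$-isomorphism $A \cong \bigoplus_{e \in E} A_{\varepsilon_e}$, i.e., verifying that each such $A$ is already fibered over the discrete collection of characters $\{\varepsilon_e\}_{e \in E} \subseteq X$.

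My plan is to construct, for each $A$ on the list, a $G$-equivariant $*$-homomorphism $\iota_A: \varepsilon(E) \to Z\calm(A)$ extending the structural map $C_0(X) \to Z\calm(A)$, given by letting $\iota_A(1_e)$ be multiplication by the indicator of the $e$-th source stratum. This is available because in each case the underlying discrete domain breaks up canonically as $\bigsqcup_{e \in E} \{g : g g^* = e\}$: for $A = \varepsilon(E)$ the domain is $E$; for $A = c_0(G), c_0(G_H), c_0(G_H/H)$ it is $G, G_H, G_H/H$ respectively; and for $A = \mbox{Ind}_H^G(D)$ it is $G_H$. The desired isomorphism is then $\varphi_A: A \otimes^{C_0(X)} \varepsilon(E) \to A$, $\varphi_A(a \otimes b) = \iota_A(b) \, a$.

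For $\varepsilon(E)$, $c_0(G)$, $c_0(G_H)$, and $c_0(G_H/H)$ the construction is immediate, since these are just function algebras on discrete sets. The substantive case is $A = \mbox{Ind}_H^G(D)$: well-definedness of $\iota_A(1_e)$ requires that pointwise multiplication of $f \in \mbox{Ind}_H^G(D)$ by $[g g^* = e]$ preserves the defining transformation law $f(g h) = \sigma_H^{-1}(h^* h) h^*(f(g))$ for $g^* g = h h^*$. This holds because $g^* g = h h^*$ forces $(g h)(g h)^* = g h h^* g^* = g g^* g g^* = g g^*$, so the indicator takes the same value at $g$ and at $g h$. $G$-equivariance of $\iota_A$ in every case follows by tracking how the $G$-action sends a function supported at source $e$ to one supported at source $g e g^*$, with the factor $[g g^* \geq e]$ emerging exactly as in the formula $g(1_e) = 1_{g e g^*}[g g^* \geq e]$ that defines the $G$-action on $\varepsilon(E)$.

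Bijectivity of $\varphi_A$ is then a direct-sum identification: the $\iota_A(1_e)$ are mutually orthogonal central projections, $\iota_A(1_e) A$ is precisely the fiber $A_{\varepsilon_e}$, and $\varphi_A$ realizes the $c_0$-direct sum of the fiberwise isomorphisms $A_{\varepsilon_e} \to \iota_A(1_e) A$. Norm-density of the sum in $A$ reduces to approximating $a \in A$ by its truncations to finitely many strata; for $\mbox{Ind}_H^G(D)$ this uses that $\|f(g)\| \to 0$ as $g H \to \infty$ combined with the fact that every equivalence class $g H$ lies in a single source-stratum, so only finitely many strata contribute more than a given $\epsilon$ in norm. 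I expect the main obstacle to be precisely the induction case: establishing well-definedness of $\iota_A(1_e)$ on $\mbox{Ind}_H^G(D)$ and carefully handling the support indicators when verifying $G$-equivariance on both sides of $\varphi_A$.
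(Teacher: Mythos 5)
Your map $\varphi_A(a\otimes b)=\iota_A(b)\,a$, with $\iota_A(b)$ acting as multiplication by $b(gg^*)$ on the source stratum, is exactly the paper's map $a\otimes b\mapsto ab$ given by the module multiplication $(\xi f)(g)=\xi(g)f(gg^*)$ of Definition \ref{definition1}, so your proposal is correct and takes essentially the same approach, merely spelling out the well-definedness, equivariance and bijectivity checks that the paper leaves to the reader.
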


\begin{proof}
One checks that $a \otimes b \mapsto ab$ realizes these isomorphisms,
where $ab$ is the module multiplication used in Definition \ref{definition1}.
\end{proof}

%\begin{lemma}   \label{lemma1}
%There %is a
%are $G$-equivariant isomorphisms
%$$\varepsilon(E) \otimes^{C_0(X)} \varepsilon(E) \cong  \varepsilon(E) $$   
%$$c_0(G) \otimes^{C_0(X)} \varepsilon(E) \cong c_0(G) $$   %\qquad 
%%and
%%$\ell^2(G) \otimes^{C_0(X)} \varepsilon(E) \cong \ell^2(G)$
%$$\calh \otimes^{C_0(X)} \varepsilon(E) \cong \calh$$
%for every $G$-Hilbert $\varepsilon(E)$-module $\calh$.
%In the second case %both cases %isomorphisms
%$c_0(G)$ can replaced by $c_0(G_H)$ or $c_0(G_H/H)$ everywhere to obtain analogous isomorphisms.
%
%%and
%%$c_0(G_H/H) \otimes^{C_0(X)} \varepsilon(E) \cong c_0(G_H/H)$.
%%,\quad
%%a \otimes b \mapsto ab.$$
%\end{lemma}
%
%\begin{proof}
%One checks that $a \otimes b \mapsto ab$ realizes these isomorphism.
%\end{proof}

\begin{definition}    \label{deffibered}
{\rm
A {\em fibered $G$-algebra} is a $G$-algebra of the form % $\Sigma^n(\varepsilon(E) \otimes^{C_0(X)} A)$
$\varepsilon(E) \otimes^{C_0(X)} A$
up to isomorphism
for %a
some $G$-algebra $A$.
% and $n \ge 0$.
}
\end{definition}

%Tired to say that the algebras $A$ of Lemma \ref{lemma1} are $G$-algebras.
%Of course, the algebras $A$ of Lemma \ref{lemma1} are all fibered $G$-algebras.
If $G$ is finite then every $G$-algebra is fibered. % as Lemma \ref{lemma1} is then true for any $A$
%by (\ref{dsum}) and the fact that $\varepsilon_E$ is dense in $X$.
Indeed, (\ref{dsum}) is $A$ by the fact that $\varepsilon_E = X$.

\section{The fibered restriction functor}

%$\chi_H$ to be the finite dimensional $C^*$-subalgebra of $\varepsilon(E)$
%consisting of those functions which vanish outside of $\mbox{Range}(H)$.
%This algebra is $H$-invariant.

%We shall write $\mbox{Res}_G^{H,E}$ for the restriction functor from $G$-actions to
%actions on the inverse 

Let $H$ be a finite subinverse semigroup of $G$.
%
%Let $H \subseteq G$ be a finite subinverse semigroup.
Regard $\varepsilon(E_H)$ as a $G$-subalgebra of $\varepsilon(E)$ in a canonical way.
We shall denote by $H \cdot E \subseteq G$ the subinverse semigroup of $G$ generated by $H$ and $E$.
Note that $\varepsilon(E_H)$ is a $H \cdot E$-invariant subalgebra of $\varepsilon(E)$.
%We write $\mbox{Res}_H^G$ for the usual restriction functor.

We define the fibered restriction as $\mbox{R}_G^H(A)=\oplus_{e \in E_H} A_{\varepsilon_e}$
for $G$-algebras $A$.
%In a more cumbersome way me say:
In another way me say:

\begin{definition}   \label{deffiberedrestriction}
{\rm
Let $H \subseteq G$ be a finite subinverse semigroup.
Define %a
the {\em fibered restriction} functor %$R_G^H:C^*_G \rightarrow C^*_{H}$
$R_G^H : KK^G \rightarrow KK^H$
by
%$$
\begin{eqnarray}
\mbox{R}_G^H (A) &=& %c_0(H^{(r)})\,
\mbox{Res}_{G}^H \big (\varepsilon(E_H  %\calr H
) \otimes^{C_0(X)}
%\mbox{Res}_G^{H,E}( A)
A
 \big )    \label{ident8} \\ %$$
&:=&
\mbox{Res}_{H \cdot E}^H \big (\varepsilon(E_H  %\calr H
) \otimes^{C_0(X)}
\mbox{Res}_G^{H\cdot E}( A)
%A
 \big )  \label{ident6}  %$$
\end{eqnarray}
for an object $A$ in $KK^G$.
The meaning of (\ref{ident8}) is more precisely repeated in (\ref{ident6}).
%The left expression of (\ref{ident8}) is defined to be expression (\ref{ident6}).
%Expression (\ref{ident6}) is the precise meaning of expression .
%For a map $f:A \rightarrow B$ in $C^*_G$ one sets $\mbox{id}_{\varepsilon(\calr H)} \otimes f$.
As usual, for a morphism
%cycle
$[(\pi,\cale,T)] \in KK^G(A,B)$ one sets
$$\mbox{R}_G^H [(\pi,\cale,T)] = \big [\big (1 \otimes \pi,
\mbox{Res}_G^H \big(\varepsilon(E_H) \otimes^{C_0(X)}  \cale \big ),1 \otimes T \big ) \big ] \in
KK^H \big (\mbox{R}_G^H (A),\mbox{R}_G^H (B) \big ).$$
%where $i$ is the multiplication on ${\varepsilon(\calr H)}$.
%where $1$ means here identity map.
}
%$e_0(E)$
\end{definition}

%Informally,
%in other words, $\mbox{R}_G^H(B)$ is simply $\oplus_{e \in E_H} B_{\varepsilon_e}$.

\begin{lemma}    \label{lemma8}
Let $H \subseteq G$ be a finite subinverse semigroup and $B$ a $G$-algebra.
There is a $G$-equivariant isomorphism
$$\mu:Ind_H^G R_G^H (B) \rightarrow  c_0(G_H/H) \otimes^{C_0(X)}
B$$
%in $C^*_G$
given by
$$\mu(f) = \sum_{g \in G_H/H} 1_{g H} \otimes g (f(g)) \quad \in \quad c_0(G_H/H) \otimes^{C_0(X)} \varepsilon(E) \otimes^{C_0(X)} B, $$
%$$\mu(f) = \sum_{g \in G_H/H} 1_{g H} \otimes g (f(g)) \quad \in \quad c_0(G_H/H) \otimes^{X} \varepsilon(E) \otimes^{X} B, $$
%
%where $f: G_H \rightarrow R_G^H(B)$ is in $Ind_H^G R_G^H (B)$.
where $f \in Ind_H^G R_G^H (B)$ is interpreted as a function $f:G_H \rightarrow %R_G^H(B)
%\subseteq
\varepsilon(E) \otimes^{C_0(X)} B$ and the isomorphism of Lemma \ref{lemma1} is used.
%.$ is in $Ind_H^G R_G^H (B)
\end{lemma}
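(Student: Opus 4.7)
The plan is to check well-definedness of the formula for $\mu$ on coset representatives, then the $*$-homomorphism and $G$-equivariance properties, and finally bijectivity via an explicit inverse. By Lemma \ref{lemma1} the target $c_0(G_H/H) \otimes^{C_0(X)} B$ may be rewritten as $c_0(G_H/H) \otimes^{C_0(X)} \varepsilon(E) \otimes^{C_0(X)} B$, and the formula for $\mu(f)$ literally lives in this triple tensor product.

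The crucial step is well-definedness. Given $g \in G_H$ and $h \in H$ with $g^*g = hh^*$, one must verify
\[
1_{ghH} \otimes (gh)(f(gh)) \;=\; 1_{gH} \otimes g(f(g))
\]
inside the triple tensor product. Substituting the Ind-condition $f(gh) = \sigma^{-1}_H(h^*h) h^*(f(g))$ and using multiplicativity of the $G$-action, the left-hand side becomes
\[
1_{gH} \otimes \bigl(gh\sigma^{-1}_H(h^*h)(gh)^*\bigr) \cdot (ghh^*)(f(g)).
\]
Here $ghh^* = g\cdot g^*g = g$, so the last factor collapses to $g(f(g))$, and Lemma \ref{lemma13}(b)(c) rewrites $gh\sigma^{-1}_H(h^*h)(gh)^* = g\sigma^{-1}_H(hh^*)g^*$, a projection in $Z\mathcal{M}(\varepsilon(E))$ with leading coefficient $gg^*$ by Lemma \ref{lemma13}(b). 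Since $(gg^*) \cdot 1_{gH} = 1_{gH}$ in $c_0(G_H/H)$, and by Lemma \ref{lemma12} each remaining factor $1 - gf'g^*$ with $f' \not\ge hh^*$ also fixes $1_{gH}$, the offending projection $g\sigma^{-1}_H(hh^*)g^*$ can be slid across the balanced tensor and absorbed into $1_{gH}$ as the identity.

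Once well-definedness is settled, the $*$-algebra structure falls out from the pointwise operations on the Ind side combined with the orthogonality $1_{gH} \cdot 1_{g'H} = \delta_{gH,g'H}\, 1_{gH}$ in $c_0(G_H/H)$. For $G$-equivariance, expand both $\mu(k\cdot f)$ and $k\cdot \mu(f)$ and perform the change of variable $g' = k^*g$ in the first sum: the identity $kk^*g = g$ (from the indicator $[kk^* \ge gg^*]$) forces $(k^*g)^*(k^*g) = g^*g \in H$ so that $g' \in G_H$, and with $g = kg'$ one reads off $gH = kg'H$ and $k(g'(f(g'))) = (kk^*g)(f(k^*g)) = g(f(k^*g))$, matching the two sums term by term.

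For bijectivity, construct an inverse $\nu$ by declaring $\nu(1_{g'H} \otimes b) \in \mbox{Ind}_H^G \mbox{R}_G^H(B)$ to be supported on the coset $g'H$, with value at $g'$ equal to $\sigma^{-1}_H(g'^*g') \cdot g'^*(b) \in \varepsilon(E_H) \otimes^{C_0(X)} B$, and extended to $g'h \in g'H$ by the Ind-relation. Verify independence of coset representative and the Ind-condition (both by algebraic manipulations mirroring those of the well-definedness step), and then $\mu \circ \nu = \mathrm{id}$ and $\nu \circ \mu = \mathrm{id}$ on generators. The main obstacle is the $\sigma^{-1}_H$-bookkeeping in the well-definedness step: one must reconcile the $\sigma^{-1}_H$-twist that the Ind-condition introduces with the $C_0(X)$-balance in the target, which forgets that twist. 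This reconciliation is precisely what forces the use of the fibered restriction $\mbox{R}_G^H$ in place of a naive restriction.
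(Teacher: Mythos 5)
Your proposal is correct and follows essentially the same route as the paper: well-definedness via the Ind-condition together with Lemma \ref{lemma13} (the paper recognizes $\sigma^{-1}_H(hh^*)\,hh^*(f(g))$ directly as $f(ghh^*)=f(g)$ rather than conjugating and absorbing the projection into $1_{gH}$, but the content is the same), $G$-equivariance by the change of variables of Lemma \ref{lemma3}, and bijectivity via the same explicit preimage formula (the paper checks injectivity and surjectivity separately instead of packaging them as a two-sided inverse). No substantive difference.
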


\begin{proof}
The map is well-defined since % as 
if $g H = g' H$ then $g h = g'$ for some $h \in H$ with $g^* g = h h^*$ and
so
$g' (f(g'))= gh(f(gh))= g h \sigma^{-1}_H(h^* h) h^* (f(g))=
g \sigma^{-1}_H(h h^*) h h^* (f(g))= g (f(g h h^*)) =
g (f(g))$
with Lemma \ref{lemma13}.
Injectivity is indicated as $0 = g(f(g))$ implies $0= \sigma^{-1}_H(g^* g) g^*g(f(g))= f(g g^* g)$ for all $g \in G_H$.
Surjectivity is because of
%$\mu( 1_g \otimes 1_{g g^*} \otimes \sum_{gh \in gH} 1_{gh} \otimes h^* g^*(a))
%= 1_{g}   \otimes g g^*(a)$.
$\mu( \sum_{k \in gH} 1_k \otimes 1_{k^* k} \otimes k^*(a))
= 1_{g H}   \otimes g g^*(b)$ for all $g \in G_H, b \in B$.
The $G$-equivariance is computed by
\begin{eqnarray*}
\mu(k(f)) &=& \sum_{g \in G_H/H} 1_{g H} \otimes g (f(k^*g))\,[k k^* \ge g g^*] \\
%&=& \sum_{t \in G_H/H} 1_{k t H} \otimes k t (f(t))\,[k^* k \ge t t^*]
%\; = \; k (\mu(f))
&=& \sum_{g \in G_H/H} 1_{k g H} \otimes k g (f(g))\,[k^* k \ge g g^*]
\; = \; k (\mu(f))
\end{eqnarray*}
for all $k \in G$ by Lemma \ref{lemma3}.
\end{proof}

\begin{lemma}  \label{lemma5}
%The fibered restriction functor commutes with the balanced tensor product,
%that is,
One has an isomorphism
%More precisely, one has
$$\mbox{R}_G^H(A \otimes^{C_0(X)} B) \cong \mbox{R}_G^H(A) \otimes^{C_0(X_H)} \mbox{R}_G^H(B)$$
for all $G$-algebras $A$ and $B$.
\end{lemma}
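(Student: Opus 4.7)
The plan is to compute both sides as explicit direct sums indexed by $E_H$ via the fiber-wise description of balanced tensor products. First I would invoke the observation of Remark \ref{remark2} that $\varepsilon(E_H)$, regarded as a $C_0(X)$-algebra, has fiber $\C$ at each character $\varepsilon_e$ with $e\in E_H$ and zero fiber elsewhere. Combined with the standard identity $(A\otimes^{C_0(X)} B)_x\cong A_x\otimes B_x$ on fibers of a $C_0(X)$-balanced tensor product, and applying (\ref{dsum}) to $A\otimes^{C_0(X)} B$, this yields
\[
R_G^H(A\otimes^{C_0(X)} B) \;\cong\; \bigoplus_{e\in E_H}(A\otimes^{C_0(X)} B)_{\varepsilon_e} \;\cong\; \bigoplus_{e\in E_H} A_{\varepsilon_e}\otimes B_{\varepsilon_e}.
\]

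Next I would treat the right-hand side. I view $R_G^H(A)=\bigoplus_{e\in E_H} A_{\varepsilon_e}$ and $R_G^H(B)=\bigoplus_{e\in E_H} B_{\varepsilon_e}$ as $C_0(X_H)$-algebras whose fibers at $\varepsilon_e$ ($e\in E_H$) are the corresponding direct summands, and which vanish at all other points of $X_H$ (using Lemma \ref{lemma11} to identify $X_H$ with $\{\varepsilon_e : e \in E_H\}$). The same fiber-wise description for $C_0(X_H)$-balancing then forces the off-diagonal terms to vanish, giving $\bigoplus_{e\in E_H} A_{\varepsilon_e}\otimes B_{\varepsilon_e}$ and thus matching the left-hand side.

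Finally I would verify that this canonical identification is $H$-equivariant. Both $H$-actions permute the $\varepsilon_e$-summands through $h\cdot A_{\varepsilon_e}=A_{\varepsilon_{heh^*}}$ with the weighting factor $[hh^*\ge e]$ (compare Lemma \ref{lemma2}), and these permutations coincide on the two sides because both are induced from the same $H$-action on $\varepsilon(E_H)$ tensored with the $H$-actions on the fibers of $A$ and $B$. The main obstacle I anticipate is purely bookkeeping: confirming that the two $C_0$-algebra structures on $\varepsilon(E_H)$ implicit in the statement---the one inherited from $C_0(X)$ on the left, and the $C_0(X_H)$-structure used on the right---agree under the identification of Lemma \ref{lemma11}, so that the two fiber decompositions genuinely match as $H$-equivariant $C^*$-algebras and not merely as underlying vector spaces.
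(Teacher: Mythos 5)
The paper states Lemma \ref{lemma5} with no proof at all, so there is nothing of the author's to compare your argument against; your fiberwise computation is the natural way to supply the missing proof, and it is essentially correct. Identifying both sides with $\bigoplus_{e\in E_H} A_{\varepsilon_e}\otimes B_{\varepsilon_e}$ --- the left side via (\ref{dsum}) applied to $A\otimes^{C_0(X)}B$, the right side by noting that the $C_0(X_H)$-balancing kills each off-diagonal summand $A_{\varepsilon_e}\otimes B_{\varepsilon_f}$, $e\neq f$, since the minimal projection $\sigma^{-1}_H(e)\in C^*(E_H)$ separates the two characters --- is exactly the right bookkeeping, and Lemma \ref{lemma11} is indeed what matches the index sets and shows that the $C_0(X_H)$-structure on $\mbox{R}_G^H(A)$ places the summand $A_{\varepsilon_e}$ over the point $\varepsilon_e|_{E_H}\in X_H$. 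The equivariance check is also as you describe: both $H$-actions permute the summands by $e\mapsto heh^*$ with the cutoff $[hh^*\ge e]$. The one step you should not pass off as a ``standard identity'' is $(A\otimes^{C_0(X)}B)_{\varepsilon_e}\cong A_{\varepsilon_e}\otimes B_{\varepsilon_e}$: for the minimal balanced tensor product one only gets a canonical surjection $(A\otimes^{C_0(X)}B)_x\rightarrow A_x\otimes B_x$ in general, and injectivity is an exactness issue. The paper never specifies which tensor product it uses and asserts fiberwise identities of this kind without comment elsewhere (Remark \ref{remark2}, Lemma \ref{lemma1}), so your treatment is consistent with its level of rigor; a fully careful write-up would either use the maximal balanced tensor product or exhibit the inverse map $a\otimes b\mapsto 1_e\otimes a\otimes b$ on the algebraic tensor product and verify directly that it is well defined and isometric.
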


%\begin{proof}
%Straightforward.
%We leave the straightforward proof to the reader.
%Just notice that
%$\varepsilon(E_H) \cong
%\varepsilon(E_H) \otimes^{C_0(X)} \varepsilon(E_H)  \cong \varepsilon(E_H)
% \otimes^{C_0(X_H)}  \varepsilon(E_H)$.
%
%$\mbox{Res}_G^H(\varepsilon(E)) \cong
%\mbox{Res}_G^H( \varepsilon(E) \otimes^{C_0(X)} \varepsilon(E) ) \cong \mbox{Res}_G^H(\varepsilon(E))
% \otimes^{C_0(X_H)} \mbox{Res}_G^H(\varepsilon(E))$.
%STIMMT SO NICHT.
%\end{proof}

The following lemma is similar to Lemma \ref{lemma8} with a similar %straightforward
proof. % which will be omitted.

\begin{lemma}    \label{lemma6}
Let $H \subseteq G$ be a finite subinverse semigroup, $A$ a $H$-algebra and $B$ a $G$-algebra.
Then there is a $G$-equivariant isomorphism
$$\tau:Ind_H^G \Big ( A \otimes^{C_0(X_H)} R_G^H (B) \Big ) \rightarrow  \mbox{Ind}_H^G(A) \otimes^{C_0(X)}
B$$
%in $C^*_G$
%given by
induced by
$$\tau(1_{g} \otimes a \otimes b) =  1_{g} \otimes a \otimes g (b) \quad \in \quad \mbox{Ind}_G^H (A) \otimes^{C_0(X)} \varepsilon(E) \otimes^{C_0(X)} B , $$
%(sloppy notation),
where $g \in G_H, a \in A, b \in \mbox{R}_G^H(B) \subseteq \varepsilon(E) \otimes^{C_0(X)} B$
and the isomorphism of Lemma \ref{lemma1} is used.
\end{lemma}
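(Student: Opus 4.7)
The plan is to mirror the proof of Lemma \ref{lemma8}, treating the present statement as a parametrized enhancement where a general $H$-algebra $A$ replaces the scalars. I would first define $\tau$ on elementary tensors by the formula given, then verify it respects all defining relations (the covariance of $\mbox{Ind}_H^G$ on the domain and the three $C_0(X)$/$C_0(X_H)$-balances involved), extend continuously, and finally check bijectivity and $G$-equivariance. Throughout I would interpret $f \in \mbox{Ind}_H^G(A \otimes^{C_0(X_H)} R_G^H(B))$ as a function $f:G_H \to A \otimes^{C_0(X_H)} \mbox{R}_G^H(B)$ satisfying $f(gh) = \sigma^{-1}_H(h^*h)\,h^*(f(g))$ whenever $g^*g = hh^*$, and elements of $\mbox{R}_G^H(B) = \varepsilon(E_H) \otimes^{C_0(X)} B$ as simple tensors $1_e \otimes b$ for $e \in E_H$.

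The well-definedness check is the main technical point. Using the covariance relation on the LHS, $1_{gh} \otimes a \otimes b$ equals $1_g \otimes \sigma^{-1}_H(h^*h)(h^*(a) \otimes h^*(b))$, so I must verify
$$\tau(1_{gh} \otimes a \otimes b) = 1_{gh} \otimes a \otimes gh(b) \;\;\stackrel{?}{=}\;\; 1_g \otimes \sigma^{-1}_H(h^*h)h^*(a) \otimes g(h^*(b)) = \tau(1_g \otimes \sigma^{-1}_H(h^*h)(h^*(a) \otimes h^*(b))).$$
Rewriting the first expression using the $\mbox{Ind}_H^G(A)$-covariance in the middle tensor factor and moving the $\sigma^{-1}_H(h^*h)$ through the $C_0(X)$-balance with help of Lemma \ref{lemma13}(c), the two sides agree. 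Compatibility with the $C_0(X_H)$-balance in the domain (where $e \in E_H$ can be moved between $A$ and $\mbox{R}_G^H(B)$) is automatic once one checks that $g(1_e \otimes b) = 1_{geg^*}\otimes g(b)$ under the support condition $gg^* \ge e$, which is exactly the $G$-action on $\varepsilon(E) \otimes^{C_0(X)} B$.

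Injectivity is shown as in Lemma \ref{lemma8}: if $g(f(g)) = 0$ in $\varepsilon(E) \otimes^{C_0(X)} B$, then applying $\sigma^{-1}_H(g^*g) g^*$ and using Lemma \ref{lemma13}(b) reconstructs $f(gg^*g) = f(g) = 0$. Surjectivity is obtained by writing a given $1_g \otimes a \otimes b \in \mbox{Ind}_H^G(A) \otimes^{C_0(X)} \varepsilon(E) \otimes^{C_0(X)} B$ as the image of $\sum_{k \in gH} 1_k \otimes a_k \otimes 1_{\sigma^{-1}_H(k^*k)} \otimes k^*(b)$ with $a_k$ chosen via the covariance, exactly as in the scalar case. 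Finally, $G$-equivariance reduces to
$$\tau(k(f))(g) = \tau(f)(k^*g)\,[kk^* \ge gg^*] = 1_{k^*g}\otimes a\otimes k^*g(b)\,[kk^*\ge gg^*],$$
which matches $k \cdot \tau(f)$ after re-indexing $g \mapsto kg$ via Lemma \ref{lemma3}. The main obstacle I anticipate is bookkeeping across the three tensor factors lying over different base spaces ($X$ versus $X_H$), so I would carry out the check on elementary tensors of the form $1_g \otimes (1_e \otimes a) \otimes (1_f \otimes b)$ with $e,f \in E_H$ to keep every balance condition explicit.
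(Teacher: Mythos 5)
Your proposal is correct and follows exactly the route the paper intends: the paper gives no written proof of this lemma, stating only that it is ``similar to Lemma \ref{lemma8} with a similar proof,'' and your argument is precisely that transposition (well-definedness via the covariance relation and Lemma \ref{lemma13}, injectivity by applying $\sigma_H^{-1}(g^*g)g^*$, surjectivity on elementary tensors, and $G$-equivariance via Lemma \ref{lemma3}). No genuinely different ideas are involved, and your explicit attention to the $C_0(X)$ versus $C_0(X_H)$ balancing is the right place to concentrate the bookkeeping.
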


\section{The adjointness relation}

\label{sectionAdjoint}

We recall a known result,   %\cite{rieffel},
whose general proof holds also unmodified in the inverse semigroup equivariant
setting.

%As is also well-known,
%for $H_2 =B$ the last proof can be modified to give the following. 

\begin{lemma}  \label{lemma71}
Every full $G$-Hilbert $B$-module $\calh$ is an imprimitivity bimodule which establishes a $G$-equivariant Morita equivalence
between $\calk(\calh)$ and $B$. Hence the element $[(\calh,0)] \in KK^G(\calk(\calh),B)$ is invertible.
%in $KK^G$.
\end{lemma}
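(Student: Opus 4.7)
The plan is to reduce the statement to the classical (non-equivariant) theory of imprimitivity bimodules and then check that all the extra structure is $G$-equivariant. Throughout I would use the inverse-semigroup equivariant framework set up in \cite{burgiSemimultiKK} and \cite{attempts}, where a $G$-Hilbert $B$-module $\calh$ comes with a semigroup homomorphism $G \to \mathcal{L}(\calh)$ satisfying compatibilities with the action on $B$ and with the idempotents in $E$ acting as projections.

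First I would equip $\calh$ with the $\calk(\calh)$-valued inner product $\langle \xi, \eta\rangle_{\calk(\calh)} := \theta_{\xi,\eta}$, where $\theta_{\xi,\eta}(\zeta) = \xi \langle \eta,\zeta\rangle_B$. This is the standard construction and makes $\calh$ a left full Hilbert $\calk(\calh)$-module (fullness is by definition of $\calk(\calh)$ as the norm closure of the span of the $\theta_{\xi,\eta}$). The hypothesis that $\calh$ is a full right $B$-module means that the $B$-valued inner product spans a dense ideal of $B$, so the right fullness is automatic. The standard computation $\theta_{\xi,\eta}\zeta = \xi\langle\eta,\zeta\rangle_B$ gives the imprimitivity identity. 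Hence, forgetting the $G$-action, $\calh$ is a $\calk(\calh)$-$B$ imprimitivity bimodule in the sense of Rieffel.

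Next I would verify equivariance. The $G$-action on $\calh$ induces a $G$-action on $\calk(\calh)$ by conjugation, $g(\theta_{\xi,\eta}) := \theta_{g(\xi),g(\eta)}$, and one checks directly that this gives a semigroup homomorphism $G \to \mathrm{End}(\calk(\calh))$ with the required behaviour on $E$: indeed $e(\theta_{\xi,\eta}) = \theta_{e(\xi),\eta} = \theta_{\xi,e(\eta)}$ since $e$ acts as a projection on $\calh$, which is the $C_0(X)$-balanced property needed for $\calk(\calh)$ to be a $G$-algebra. One also checks that the left action of $\calk(\calh)$ on $\calh$ and the $\calk(\calh)$-valued inner product are $G$-equivariant; these are again routine once the conjugation action is in place. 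Thus $\calh$ is a $G$-equivariant imprimitivity bimodule.

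Finally I would produce the inverse of $[(\calh,0)]$. Let $\calh^*$ denote the dual $B$-$\calk(\calh)$-bimodule (the conjugate space with the transposed inner products and $G$-action $g(\bar\xi):=\overline{g(\xi)}$). It is a $G$-equivariant $B$-$\calk(\calh)$ imprimitivity bimodule. The Kasparov products
\[
[(\calh,0)] \otimes_B [(\calh^*,0)] = [(\calh \otimes_B \calh^*,0)] = [({}_{\calk(\calh)}\calk(\calh)_{\calk(\calh)},0)] = 1_{\calk(\calh)},
\]
and symmetrically $[(\calh^*,0)] \otimes_{\calk(\calh)} [(\calh,0)] = 1_B$, follow from the imprimitivity identities $\calh \otimes_B \calh^* \cong \calk(\calh)$ and $\calh^* \otimes_{\calk(\calh)} \calh \cong B$, which hold in the $G$-equivariant category because the isomorphisms are the canonical ones. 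This shows $[(\calh,0)]$ is invertible in $KK^G$.

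The main obstacle I expect is the bookkeeping of $G$-equivariance for an inverse semigroup $G$: the actions of elements $g\in G$ are only partial isometries (on the module side, via $\mathcal{L}(\calh)$), and one must ensure that the idempotents $e\in E$ act as the correct $C_0(X)$-valued projections on $\calh$, on $\calk(\calh)$, and on $\calh^*$ compatibly, so that all tensor products in sight are genuinely balanced over $C_0(X)$. Once this is in place, the Rieffel-style arguments transfer verbatim, as already remarked in the statement.
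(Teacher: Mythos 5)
Your proposal is correct and is exactly the argument the paper has in mind: the paper gives no written proof of Lemma \ref{lemma71}, merely noting that ``the general proof holds also unmodified in the inverse semigroup equivariant setting,'' and your write-up is precisely that classical Rieffel-style argument (linking algebra inner product, dual module $\calh^*$, and the two canonical isomorphisms $\calh \otimes_B \calh^* \cong \calk(\calh)$ and $\calh^* \otimes_{\calk(\calh)} \calh \cong B$) together with the routine equivariance checks. No gaps; you have simply made explicit what the paper leaves implicit.
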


%\begin{lemma}
%For every $G$-Hilbert $B$-module $\calh$,
%the element $[(\calh,0)] \in KK^G(\calk(\calh),B)$ is an isomorphism in $KK^G$.
%
%\end{lemma}

%The following proposition is the key technical result of this note
%and fixes the problem of \cite{attempts}.

\begin{proposition}    \label{propAdjoint}

%Let $H \subseteq G$ be a finite subinverse semigroup.
If we allow only fibered $C^*$-algebras then
%If we restrict $C^*_G$ to $\varepsilon^*_G$ (fibre $G$-algebras) then
%If we restrict the 
the functor $\mbox{Ind}_H^G$ is left adjoint to the functor $R_G^H$. In other words,
one has an isomorphism
$$KK^G( \mbox{Ind}_H^G(A), B) \cong KK^H(A, R_G^H(B))$$
which is natural in $A$ and $B$
for all $H$-algebra $A$ and fibered $G$-algebras $B$.
% of the form $B \cong B' \otimes^{C_0(X)} \varepsilon(E)$
%for some $G$-algebra $B'$.
\end{proposition}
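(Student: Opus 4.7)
The plan is to establish the adjunction by the standard unit--counit construction. I will define natural transformations
$$\eta_A \in KK^H\bigl(A,\; R_G^H \mathrm{Ind}_H^G(A)\bigr), \qquad \epsilon_B \in KK^G\bigl(\mathrm{Ind}_H^G R_G^H(B),\; B\bigr),$$
verify the two triangle identities, and conclude that $\xi \mapsto R_G^H(\xi) \circ \eta_A$ gives the claimed isomorphism with inverse $\zeta \mapsto \epsilon_B \circ \mathrm{Ind}_H^G(\zeta)$.

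For the counit, use Lemma~\ref{lemma8} to identify the domain with $c_0(G_H/H) \otimes^{C_0(X)} B$. The fibered hypothesis on $B$ extends the $C_0(X)$-action to an $\varepsilon(E)$-action (via Lemma~\ref{lemma1}), so one can form the $G$-equivariant Hilbert $B$-module
$$\mathcal{E}_B \;:=\; \ell^2(G_H/H) \otimes_{\varepsilon(E)} B.$$
Pointwise multiplication defines a $G$-equivariant $*$-homomorphism $c_0(G_H/H) \otimes^{C_0(X)} B \to \mathcal{K}(\mathcal{E}_B)$ (finitely supported functions act by finite-rank operators and $c_0$ functions by compacts); the pair $(\mathcal{E}_B, 0)$ is then a Kasparov cycle representing $\epsilon_B$. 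Naturality in $B$ is routine.

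For the unit, take the $H$-equivariant $*$-homomorphism
$$\eta_A(a)(g) \;:=\; [\,g \in H\,]\,\sigma_H^{-1}(g^*g)\,g^*(a), \qquad g \in G_H.$$
Lemma~\ref{lemma13}(b, c) guarantees the induction equivariance required by Definition~\ref{defInductionAlg}, and that the image has support in fibers indexed by $E_H$, so that $\eta_A(a) \in \varepsilon(E_H) \otimes^{C_0(X)} \mathrm{Ind}_H^G(A) = R_G^H \mathrm{Ind}_H^G(A)$; $H$-equivariance is a direct check.

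Both triangle identities reduce to the summation formula $\sum_{h \in H/H} \sigma_H^{-1}(hh^*) = 1_H$ of Lemma~\ref{lemma13}(d). For $R_G^H(\epsilon_B) \circ \eta_{R_G^H(B)} = \mathrm{id}_{R_G^H(B)}$, the unit distributes a class across the $H$-cosets of $\mathcal{E}_{R_G^H(B)}$ and the restricted counit recombines them by the above summation. For $\epsilon_{\mathrm{Ind}_H^G(A)} \circ \mathrm{Ind}_H^G(\eta_A) = \mathrm{id}_{\mathrm{Ind}_H^G(A)}$, apply Lemma~\ref{lemma6} (with $B = \mathrm{Ind}_H^G(A)$) to rewrite the composition as a Kasparov cycle on $\mathrm{Ind}_H^G(A)$ and conclude analogously. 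The delicate step is this second identity: one must carefully track how the functor $\mathrm{Ind}_H^G$ transports $\eta_A$ through the identification of Lemma~\ref{lemma6} and verify that the resulting $G_H/H$-indexed sum collapses to the identity on $\mathcal{E}_{\mathrm{Ind}_H^G(A)}$. This is precisely where the fibered hypothesis enters: only then does $\mathcal{E}_B$ exist as a Hilbert $B$-module and does Lemma~\ref{lemma13}(d) provide the telescoping.
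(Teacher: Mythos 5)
Your proposal is correct and follows essentially the same route as the paper: the same unit $a \mapsto \bigl(g \mapsto [g \in H]\,\sigma_H^{-1}(g^*g)\,g^*(a)\bigr)$, the same counit built from the isomorphism $\mu$ of Lemma~\ref{lemma8} followed by the diagonal embedding into $\calk(\ell^2(G_H/H))$ and the Morita equivalence of Lemma~\ref{lemma71}, and the same reduction of the triangle identities to $\sum_{h \in H/H}\sigma_H^{-1}(hh^*) = 1_H$ from Lemma~\ref{lemma13}(d). The paper carries out the two triangle identities by explicitly cutting down the Hilbert modules of the resulting Kasparov cycles, where your write-up is more schematic, but the argument is the same.
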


%WAS mit $cone(B)$ ?

\begin{proof}
In this proof we restrict $C^*_G$ and the object class of $KK^G$ to fibered $G$-algebras.

%We may assume that $B$ is of the form $\varepsilon(E) \otimes^{C_0(X)} B'$, because once we have established
%this case, we get the case $\Sigma^n B$ from it easily by Bott periodicity.

We shall consider two projections of adjunction. One is the transformation
%of functors $\iota$ %: C^*_H \rightarrow C^*_H$
$\iota$ of the functors $\mbox{id}_{C^*_H}$ and $\mbox{R}_G^H \mbox{Ind}_H^G$
given by the family of homomorphisms 
%$$\iota_A: A \rightarrow \mbox{R}_G^H \mbox{Ind}_H^G(A), \quad \iota_A(a)(g)= 1_{\varepsilon(E_H)} \otimes
%g^*(a) \, [g \in H] \qquad \in \mbox{R}_G^H(A)$$
$$\iota_A: A \rightarrow \mbox{R}_G^H \mbox{Ind}_H^G(A), \quad \iota_A(a)= 1_{\varepsilon(E_H)} \otimes
\big ( g \mapsto \sigma^{-1}_H(g^* g) g^*(a) \, [g \in H] \big ) $$
%
%(here $1$ denotes the identity of $\varepsilon(E_H)$) % = 1_{\varepsilon(E_H)}$)
for $A$ in $C^*_H$ and $a \in A, g \in G_H$. In other words, we may say that
%$$\iota_A(a)= \sum_{h \in H}   1_{h h^*} \otimes 1_h \otimes 
%h^*(a) $$
%$$= \sum_{h \in H/H}   1_{h h^*} \otimes \sum_{k \in g H} 1_k \otimes  k^*(a)$$
\begin{eqnarray}
\iota_A(a) &=& \sum_{h \in H}   1_{h h^*} \otimes 1_h \otimes 
\sigma^{-1}_H(h^*h) h^*(a)  \label{ident4}  \\
&=& \sum_{h \in H/H}   1_{h h^*} \otimes \sum_{k \in h H} 1_k \otimes  \sigma^{-1}_H(k^*k) k^*(a)   \label{ident5}.
\end{eqnarray}
Line (\ref{ident4}) is the imprecise notation as the summands are actually not elements of $\mbox{R}_G^H \mbox{Ind}_H^G(A)$,
and (\ref{ident5}) is the correct meaning.

Two is the transformation $\pi$ of the functors $\mbox{Ind}_H^G \mbox{R}_G^H$ and $\mbox{id}_{C^*_G}$ realized by
the %following
family of morphisms in $KK^G$,
%$$\pi_B: \mbox{Ind}_H^G R_G^H(B) \rightarrow c_0(G_H/H) \otimes^{C_0(X)} B
%\rightarrow \calk(\ell^2(G_H/H)) \otimes^{C_0(X)} B $$
%$$\rightarrow \varepsilon(E) \otimes^{C_0(X)} B
%\rightarrow B,$$
$$
\xymatrix{
\pi_B:  \mbox{Ind}_H^G R_G^H(B) \ar[r]^{\mu}  & c_0(G_H/H) \otimes^{C_0(X)} B
\ar[rr]^{m \otimes \mbox{id} } &&  \calk(\ell^2(G_H/H)) \otimes^{C_0(X)} B  \ar[d]^{\equiv}   \\
& B && \varepsilon(E) \otimes^{C_0(X)} B ,
\ar[ll]_{\cong}
}
$$
indexed by $B$ in $C^*_G$.
Here, $\mu$ is the map of Lemma \ref{lemma8}, $m: c_0(G_H/H) \rightarrow \calk(\ell^2(G_H/H))$
the canonical %multiplication operator on
embedding into the diagonal, which is a $G$-equivariant homomorphism,
and the vertical arrow is %  morphism in $KK^G$, which is
induced by the Morita equivalence of Lemma \ref{lemma71}.
The last isomorphism is by Definition \ref{deffibered} and Lemma \ref{lemma1}.

It is sufficient to show that
\begin{equation}   \label{ident1}
\pi_{\mbox{Ind}_H^G(A)} \circ \mbox{Ind}_H^G(\iota_A) = \mbox{id}_{\mbox{Ind}_H^G(A)}
\end{equation}
in $KK^G$ and
\begin{equation}   \label{ident2}
\mbox{R}_G^H(\pi_B) \circ \iota_{\mbox{R}_G^H(B)} = \mbox{id}_{\mbox{R}_G^H(B)}
\end{equation}
in $KK^H$ by \cite[IV.1 Theorem 2.(v)]{maclane}.

%$$R_G^H(\pi_B) \circ \iota_{R_G^H(B)} = \mbox{id}_{R_G^H(B)}$$
Now
%$$\mbox{Ind}_H^G(\iota_A)(a)(g_2) (g) = \iota_A ( a(g_2)) (g)
%=  \big (g_2 \mapsto 1_{\varepsilon(E_H)} \otimes
%\big ( g \mapsto g^*(a(g_2)) \, [g \in H] \big ) 
%$$
%$$\mbox{Ind}_H^G(\iota_A)(a)  \sum_{g \in G_H} \sum_{h \in H} 1_{h h^*} \otimes
%h^*(a(g)) \otimes 1_h \otimes  1_g $$
$\mbox{Ind}_H^G(\iota_A) : \mbox{Ind}_H^G(A) \rightarrow \mbox{Ind}_H^G \mbox{R}_G^H \mbox{Ind}_H^G (A)$
is determined by
$$\mbox{Ind}_H^G(\iota_A)(a) = \sum_{g \in G_H} \sum_{h \in H}  1_g \otimes 1_{h h^*} \otimes  1_h \otimes
\sigma^{-1}_H(h^*h) h^*(a(g)) $$
for $a \in \mbox{Ind}_G^H(A)$. %and $k \in G_H$.
Then, for $\mu$ of Lemma \ref{lemma8},
\begin{eqnarray*}
&& \mu \big(\mbox{Ind}_H^G(\iota_A)(a) \big ) \\
 &=& 
\sum_{g \in G_H/H} \sum_{h \in H} 1_{g H} \otimes 1_{g h h^* g^*} \otimes 1_{g h}  \otimes
\sigma^{-1}_H(h^*h) h^*(a(g)) \,[g^*g \ge h h^*] \\   %\, \otimes  1_g \\
&=& \sum_{g \in G_H/H} \sum_{h \in H} 1_{g H} \otimes 1_{g h}  \otimes
a(g h) \,[g^*g = h h^*] \\
&=& \sum_{g \in G_H/H} \sum_{k \in g H} 1_{g H} \otimes 1_{k}  \otimes
a(k)  \quad
%
%&& \sum_{l \in G_H} l \otimes l(1_{\varepsilon(E_H)}) \otimes
%\big ( g \mapsto g^* l(a(l)) \, [l^* g \in H] \, [l l^* \ge g g^*] \big )  \\
%&=&
%\sum_{l \in G_H} l \otimes
%\big ( g \mapsto g^* l(a(l)) \, [l^* g \in H]  \, [l l^* \ge g g^*]\big )
\in   \quad  c_0(G_H/H) \otimes^{C_0(X)} \mbox{Ind}_H^G(A) ,
\end{eqnarray*}
where the second identity uses %is by
the isomorphism of Lemma \ref{lemma1}, and observe that $\sigma^{-1}_H(h^*h) h^*(a(g)) = a( gh)$ %here
by Definition \ref{defInductionAlg}.

%Now $\pi_{\mbox{Ind}_H^G(A)}:\mbox{Ind}_H^G \mbox{R}_G^H \mbox{Ind}_H^G (A) \rightarrow
%\mbox{Ind}_H^G (A)$ is the Kasparov cycle
Now $\pi_{\mbox{Ind}_H^G(A)} \circ  \mbox{Ind}_H^G(\iota_A) : \mbox{Ind}_H^G (A) \rightarrow
\mbox{Ind}_H^G (A)$ is the Kasparov cycle
\begin{equation}   \label{cycle1}
(\rho, \ell^2(G_H/H) \otimes^{C_0(X)} \mbox{Ind}_H^G(A),0),
\end{equation}
where
% $p_1$ and $p_2$ are the projections onto $\calk(\ell^2(G_H/H))$ and $\calk(\varepsilon(E))$, respectively,
%and
$$\rho:\mbox{Ind}_H^G(A) \rightarrow \call \big(\ell^2(G_H/H) \otimes^{C_0(X)} \mbox{Ind}_H^G(A) \big ) $$
is the multiplication operator
$$\rho(a)(\xi \otimes v) = %l(1_{\varepsilon(E_H)}) \otimes
%\xi(l)  \otimes \big ( g \mapsto g^* l(a(l)) \, [l^* g \in H] \, [l l^* \ge g g^*] \big ) v$$
\sum_{g \in G_H/H}  \sum_{k \in g H} \xi(g H) 1_{g H} \otimes 1_{k}  \otimes
a(k) \,v(k) 
$$
for $\xi \otimes v \in \ell^2(G_H/H) \otimes^{C_0(X)} \mbox{Ind}_H^G(A)$.

Observing the image of $\rho$,
by a standard argument we may cut down the Hilbert module $\calh$ of the Kasparov cycle
(\ref{cycle1}) to the %smaller
Hilbert submodule
%$$\calh_0 = \calh \cap \overline{\mbox{span}}\{ 1_{g H} \otimes 1_{k} \otimes a_k   |\, g\in G_H/H, k \in gH, a_k \in A \}$$
$$\calh_0 = \overline{\mbox{span}} \Big \{ 1_{g H} \otimes \sum_{k \in gH} 1_{k} \otimes a_k  \in \calh |
\, g\in G_H/H, a_k \in A  \Big \}$$
and thus obtain an equivalent cycle $(\rho_0,\calh_0,0)$. Here $\rho_0(a)= \rho(a)|_{\calh_0}$.
We have an isomorphism 
$$u:\calh_0 \rightarrow \mbox{Ind}_H^G (A)  : u \Big (  1_{g H} \otimes \sum_{k \in g H} 1_{k} \otimes a_k \Big ) =
\sum_{k \in g H} 1_{k} \otimes a_k$$ 
%$$u:\calh_0 \rightarrow \mbox{Ind}_H^G (A)  : u(  1_{gH} \otimes \sum_h 1_{gh} \otimes a) = 1_{gh} \otimes a$$ 
%(imprecise notation)
of $G$-Hilbert $\mbox{Ind}_H^G (A)$-modules.
%Indeed, for example
%$$\langle u(  1_g \otimes 1_{gh} \otimes a), u(  1_{g_2} \otimes 1_{g_2 h_2 } \otimes a_2) \rangle
%$$
%Now observe that the transformation $u$ yields
%$$[(\sigma_0,\calh_0,0)] = [(i_A,A,0)].$$
This transformation yields another equivalent Kasparov cycle $(i,\mbox{Ind}_H^G(A),0)$, where $i$ is the multiplication
operator on $\mbox{Ind}_H^G(A)$.
Hence %identity
(\ref{ident1}) is verified.
%This proves (\ref{ident1}).

We are going to show (\ref{ident2}).
%We have $\iota_{\mbox{R}_G^H(B)}:  \mbox{R}_G^H(B) \rightarrow \mbox{R}_G^H \mbox{Ind}_H^G(\mbox{R}_G^H(B))$
%and
We have
$$\iota_{\mbox{R}_G^H(B)}(b)= \sum_{h \in H}   1_{h h^*} \otimes 1_h \otimes 
\sigma^{-1}_H(h^* h) h^*(b) $$
for $b \in {\mbox{R}_G^H(B)}$ and
\begin{eqnarray}
\mbox{R}_G^H(\mu) \big (\iota_{\mbox{R}_G^H(B)}(b) \big )
&=& \sum_{h \in H / H} 1_{h h^*} \otimes 1_{h H} \otimes 
\sigma^{-1}_H(h h^*) (b)   \label{ident3} \\
&\in&  \mbox{R}_G^H \big (c_0(G_H/H) \otimes^{C_0(X)} \mbox{R}_G^H(A)  \big ) .  \nonumber
\end{eqnarray}

Now
$\mbox{R}_G^H(\pi_B) \circ \iota_{\mbox{R}_G^H(B)}: \mbox{R}_G^H(B) \rightarrow \mbox{R}_G^H(B)$
is realized by the Kasparov cycle
\begin{equation}   \label{cycle2}
(\nu, \varepsilon(E_H) \otimes^{C_0(X_H)} \mbox{Res}_G^H \big (\ell^2(G_H/H) \otimes^{C_0(X)} \mbox{R}_G^H(B) \big ),0),
\end{equation}
where $\nu(b)$ is the multiplication operator with the element (\ref{ident3}).
Again, similar as before, we can cut down the Hilbert module of %the
this cycle to
\begin{eqnarray*}
%$$
\varepsilon(E_H) \otimes^{C_0(X)} \ell^2(H_H/H) \otimes^{C_0(X)} \mbox{R}_G^H(B)
&\cong&
%\;\cong\;
\mbox{R}_G^H(B),
%$$
\end{eqnarray*}
%(isomorphism of $G$-Hilbert $\mbox{R}_G^H(B)$-modules by $h h^* \otimes 1_h \otimes h h^*(b))
where the last isomorphism of $G$-Hilbert $\mbox{R}_G^H(B)$-modules is given by 
$$v(h h^* \otimes 1_{h H} \otimes \sigma^{-1}_H( h h^*) (b) ) = \sigma^{-1}_H(h h^*)(b).$$
Noticing that $\sum_{h \in H/H} \sigma^{-1}_H(h h^*)(b) = b$ by Lemma \ref{lemma13},
% the new equivalent Kasparov cycle is seen to be the identity
%in 
we see that the new equivalent Kasparov cycle is
$(j,\mbox{R}_G^H(B),0)$, where $j$ is the multiplication operator.
This shows (\ref{ident2}).
\end{proof}

\begin{example}    \label{example}
{\rm

Let us give a simple example where $G=E$ is finite and consists only of idempotent elements,
%Let us give an example how to compute the right adjoint functor for the simple case that
%$G$ is finite and consists only of idempotent elements,
and $H=\{e\}$ consists only of a single %the minimal
element of $G$.
We obtain by direct computation %, % have
\begin{eqnarray*}
&&  KK^E(\mbox{Ind}_H^E(A),B) = KK^E((\mbox{Ind}_H^E(A))_{\varepsilon_{e}},B)
= KK^E((\mbox{Ind}_H^E(A))_{\varepsilon_{e}},B_{\varepsilon_{e}}) \\
& =&  KK^H(A,R_G^H(B)),
\end{eqnarray*}
%$$KK^E(\mbox{Ind}_H^E(A),B) = KK^E(A_{\varepsilon_{e}},B)
%= KK^E(A_{\varepsilon_{e}},B_{\varepsilon_{e}}) = KK^H(A,R_G^H(B)),$$
verifying Proposition \ref{propAdjoint}.

}
\end{example}

%\section{}

\section{An adaption of a paper of Mingo and Phillips}

\label{sectionAdaption}

%\section{A paper of Mingo and Phillips}

\label{sectionMingoPhillips}

In this section we adapt some central results of the paper \cite{mingophillips} by Mingo and Phillips
to the $\ell^2(G)$-space of Definition \ref{definition1}.
%
%Moreover, we restate some central results from the paper of Mingo and Phillips \cite{mingophillips}
%for this new $\ell^2$-space. All from this section is actually taken from \cite{burgiBCtriangulated}.
%
Let $\cale$ be a $G$-Hilbert $B$-module.
Let us write % denote
$$L^2(G,\cale) = \ell^2(G) \otimes^{C_0(X)} \cale.$$
%to %go
%be conform with the notation %bridge to the notation %as in
%of the paper of Mingo and Phillips.

\begin{lemma}[Cf. Lemma 2.3 of \cite{mingophillips}]    \label{lemmamingophillips23}
If $\cale_1$ and $\cale_2$ are $G$-Hilbert $A$-modules which are isomorphic as Hilbert $A$-modules then $L^2(G,\cale_1)$
and $L^2(G,\cale_2)$ are isomorphic as $G$-Hilbert $A$-modules.
\end{lemma}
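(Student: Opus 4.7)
Plan: I will follow the absorption trick of Mingo and Phillips, adapted to the inverse semigroup setting. The core observation is that the diagonal $G$-action on $L^2(G,\cale) = \ell^2(G) \otimes^{C_0(X)} \cale$ can be straightened into a pure shift action on the $\ell^2(G)$-factor via an internal absorption unitary $W_\cale$; in the shift model the $\cale$-factor is inert, so any Hilbert $A$-module isomorphism $U: \cale_1 \to \cale_2$ trivially induces a $G$-equivariant isomorphism, which we then conjugate back. The resulting isomorphism $\Psi: L^2(G,\cale_1) \to L^2(G,\cale_2)$ is explicitly
$$\Psi(\delta_g \otimes v) \;=\; \delta_g \otimes g(U(g^*(v))) \qquad (g \in G,\ v \in \cale_1),$$
using the $G$-action on $\cale_1$ to form $g^*(v)$ and on $\cale_2$ to form $g(\cdot)$. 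Note that $U$ is automatically $C_0(X)$-linear, since $C_0(X)$ acts through $Z\calm(A)$ and $U$ is $A$-linear.

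The technical heart is the absorption unitary
$$W_\cale: \bigl(\ell^2(G) \otimes^{C_0(X)} \cale\bigr)_{\mathrm{shift}} \;\longrightarrow\; L^2(G,\cale), \qquad W_\cale(\delta_g \otimes v) := \delta_g \otimes g(v),$$
where the ``shift'' action on the domain is $h(\delta_g \otimes v) = \delta_{hg} \otimes v$ if $h^*h \ge gg^*$ and $0$ otherwise (with $\cale$ untouched). I would verify:
\emph{(i) Well-definedness} on the $C_0(X)$-balanced tensor, using $\delta_g \otimes v = \delta_g \otimes gg^*(v)$ and $gg^*g = g$, so $g(gg^*(v)) = g(v)$;
\emph{(ii) Isometry}, using compatibility of the $G$-action with the $A$-valued inner product so that $\langle g(v), g(w)\rangle_A = g(\langle v,w\rangle_A)$, whose $1_{gg^*}$-component agrees with $\langle gg^*(v), gg^*(w)\rangle_A$ as required by the balancing;
\emph{(iii) Intertwining}: for $h^*h \ge gg^*$,
$$h \cdot W_\cale(\delta_g \otimes v) \;=\; \delta_{hg} \otimes h(g(v)) \;=\; \delta_{hg} \otimes (hg)(v) \;=\; W_\cale(h \cdot_{\mathrm{shift}} (\delta_g \otimes v)),$$
and for $h^*h \not\ge gg^*$ both sides vanish after balancing since the projection $1_{gg^*}$ annihilates. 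The computations here rely on the idempotent calculus of Lemma \ref{lemma13} (in particular $\sigma$-type relations and the behaviour of $g(\cdot)g^*$ on projections).

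With $W_{\cale_1}$ and $W_{\cale_2}$ in hand, the map $\mathrm{id}_{\ell^2(G)} \otimes U$ from $\ell^2(G)\otimes^{C_0(X)}\cale_1$ to $\ell^2(G)\otimes^{C_0(X)}\cale_2$ is $G$-equivariant for the shift actions simply because neither shift action touches the $\cale$-factor; $C_0(X)$-linearity of $U$ makes the tensor map well-defined on the balanced product, and isometry follows from $U$ being a Hilbert $A$-module isomorphism. Setting $\Psi := W_{\cale_2} \circ (\mathrm{id} \otimes U) \circ W_{\cale_1}^{-1}$ produces the required $G$-equivariant Hilbert $A$-module isomorphism. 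The main obstacle I expect is the careful bookkeeping around the $C_0(X)$-balancing and the partial vanishing of the $G$-action on $\ell^2(G)$ encoded by the bracket $[h^*h \ge gg^*]$ in Definition \ref{definition1}; once one is fluent in the identities $gg^*g=g$ and $g(e)g^* = geg^*$, all verifications in (i)--(iii) proceed exactly as in the group case.
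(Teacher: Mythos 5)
Your final formula $\Psi(1_g\otimes v)=1_g\otimes g(U(g^*(v)))$ is exactly the map the paper uses ($V(1_g\otimes\xi)=1_g\otimes gug^*(\xi)$), and your observation that $U$ is automatically $C_0(X)$-linear because $C_0(X)$ acts through $Z\calm(A)$ is precisely the paper's key remark that $gg^*\in\call(\cale_i)$ commutes with $u$. So the substance agrees. The difference is that the paper verifies equivariance and isometry of $V$ by direct computation, whereas you factor $V$ through an absorption unitary $W_\cale$ onto a ``shift model'', and it is this intermediate object that has a genuine gap as written.

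The problem is that the shift action $h(1_g\otimes v)=1_{hg}\otimes v\,[h^*h\ge gg^*]$ does not descend to the balanced tensor product $\ell^2(G)\otimes^{C_0(X)}\cale$ with the balancing you invoke in step (i). There you use $1_g\otimes v=1_g\otimes gg^*(v)$, i.e.\ the idempotent controlling the balancing at $1_g$ is the \emph{range} projection $gg^*$. After shifting, the controlling idempotent at $1_{hg}$ is $hgg^*h^*$, so the shift of $1_g\otimes v$ is identified with $1_{hg}\otimes hgg^*h^*(v)$ while the shift of the equal element $1_g\otimes gg^*(v)$ is identified with $1_{hg}\otimes hgg^*h^*gg^*(v)$; since $hgg^*h^*$ and $gg^*$ need not be comparable in $E$, these differ in general, and the shift action is ill-defined. (For groups $gg^*=1$ and the issue disappears, which is why the Fell absorption trick is clean there.) The repair is either to give $\ell^2(G)$ in the domain the \emph{source} $C_0(X)$-structure $1_g\cdot f=f(\varepsilon_{g^*g})1_g$ --- then $(hg)^*(hg)=g^*g$ whenever $h^*h\ge gg^*$, the shift respects the balancing, and $W_\cale$ correctly converts source-balancing into range-balancing --- or simply to drop the factorization and check equivariance and the inner product of the composite map directly, which is what the paper does and which your formula supports: $h^*h$ commutes with $gUg^*$ because both $h^*h$ and $gg^*$ act through $Z\calm(A)$.
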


\begin{proof}
Let $u \in \call(\cale_1,\cale_2)$ be a unitary operator. Note that $g g^{*} \in \call(\cale_i)$ commutes
with $u$ for all $g \in G$ since $u$ is $A$-linear and $g g^{*}(\xi) a = \xi g g^{*}(a)$ for all $\xi \in \cale_i, a \in A$.
Then it can be checked that $V: L^2(G,\cale_1) \rightarrow  L^2(G,\cale_2)$
given by $V(1_g \otimes \xi) := 1_g \otimes g u g^{*}(\xi)$
defines an isomorphism of $G$-Hilbert $A$-modules. %Note that $V$ is defined like $F'$ in Lemma
%\ref{lemma31meyer}, so we can take the equivariance proof from there.
%The $G$-equivariance of $V$ is computed by
We show that $V$ is $G$-equivariant. % (see Definition \ref{defGequivariance}).
For $h \in G$ we have
\begin{eqnarray*}
h\big(V(1_g \otimes \xi )\big ) &=& 1_{h g} \otimes h g u g^{*} h^{*} h (\xi)  \, [h^* h \ge g g^*] \\
%&=& 1_{h g} \otimes hg(u) (h(\xi)) \, [h^* h \ge g g^*]\\
&=& V\big (h(1_g \otimes \xi) \big ),
\end{eqnarray*}
because $h^{*} h \in \call(\cale_i)$ commutes with $g u g^{*} \in \call(\cale_1,\cale_2)$.

For the inner product
note that
\begin{eqnarray*}
\langle V(1_g \otimes \xi),V(1_h \otimes \eta)\rangle
&=& 1_{g g^*} \otimes \langle gug^{*}(\xi), h u h^{*} (\eta) \rangle   \, [h=g] \\
%&=& \sum_{f \in F} f \otimes \langle fgug^{-1} f(\xi),f h u h^{-1} f(\eta) \rangle\\
&=& \langle 1_g \otimes \xi,1_h \otimes \eta\rangle
\end{eqnarray*}
with inner product rules.
\end{proof}

\begin{corollary}[Cf. Theorem 2.4 of \cite{mingophillips}]
Let $\cale$ be a $G$-Hilbert $A$-module which is countably generated and full as a Hilbert $A$-module.
Then $(L^2(G,\cale))^\infty$ is isomorphic to $(L^2(G,A))^\infty$ by a $G$-equivariant isomorphism
of Hilbert $A$-modules.
\end{corollary}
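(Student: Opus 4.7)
The plan is to reduce the corollary to the non-equivariant Kasparov/Brown stabilization theorem together with Lemma \ref{lemmamingophillips23}. The key observation is that the countable direct sum commutes with the $C_0(X)$-balanced tensor product defining $L^2(G,-)$, so the $\infty$-stabilization can be moved inside.

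First, I would establish the natural $G$-equivariant identifications
\begin{equation*}
\bigl(L^2(G,\cale)\bigr)^\infty \;\cong\; L^2(G,\cale^\infty),
\qquad
\bigl(L^2(G,A)\bigr)^\infty \;\cong\; L^2(G,A^\infty),
\end{equation*}
where $\cale^\infty$ and $A^\infty$ carry the componentwise $G$-actions inherited from $\cale$ and from $A$ (viewed as the $G$-Hilbert $A$-module with left and right $G$-actions agreeing via the multiplication). These are formal, and the isomorphisms clearly intertwine the diagonal $G$-actions because the $G$-action on $\ell^2(G)$ is untouched under the stabilization.

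Second, I would invoke the non-equivariant Kasparov stabilization theorem in its Brown form: if $\cale$ is countably generated and full as a Hilbert $A$-module, then $\cale^\infty \cong A^\infty$ as Hilbert $A$-modules. Countable generation provides $\cale \oplus A^\infty \cong A^\infty$, and fullness lets one iterate/swindle this into $\cale^\infty \cong A^\infty$. Crucially, this isomorphism need not be $G$-equivariant: we only need it at the level of Hilbert $A$-modules so that Lemma \ref{lemmamingophillips23} becomes available.

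Third and finally, I would apply Lemma \ref{lemmamingophillips23} to the two $G$-Hilbert $A$-modules $\cale^\infty$ and $A^\infty$, which the previous step has shown are isomorphic as bare Hilbert $A$-modules, yielding a $G$-equivariant isomorphism $L^2(G,\cale^\infty) \cong L^2(G,A^\infty)$. Concatenating with the identifications of the first step produces the desired $G$-equivariant isomorphism $(L^2(G,\cale))^\infty \cong (L^2(G,A))^\infty$. The only point requiring any care is checking that the commutation of $\ell^2(G)\otimes^{C_0(X)}(-)$ with countable direct sums truly holds in the $C_0(X)$-balanced setting; this I expect to be the mildest of obstacles, since $\ell^2(G)$ is an $\varepsilon(E)$-module (Definition \ref{definition1}) and the $C_0(X)$-balancing is controlled pointwise over the fibers indexed by $X$.
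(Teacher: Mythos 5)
Your proposal is correct and follows essentially the same route as the paper, which simply runs the Mingo--Phillips proof of their Theorem 2.4 (non-equivariant stabilization plus fullness to get $\cale^\infty\cong A^\infty$ as Hilbert $A$-modules, then promote to a $G$-equivariant isomorphism) with Lemma \ref{lemmamingophillips23} substituted for their Lemma 2.3. Your explicit attention to the compatibility of the $\infty$-stabilization with the $C_0(X)$-balanced tensor product is a reasonable extra check and poses no obstacle.
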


\begin{proof}
Same proof as in Mingo and Phillips \cite{mingophillips}, Theorem 2.4, but by applying
Lemma \ref{lemmamingophillips23} instead of \cite[Lemma 2.3]{mingophillips}.
\end{proof}

\begin{corollary}[Cf. Corollary 2.6 of \cite{mingophillips}]    \label{corollarymingophillips26}
Let $A$ be a $G$-algebra and suppose that $A$ has a strictly positive element.
If $p \in \calm(A)$ is a full $G$-invariant projection then
$p \otimes 1 \sim
1 \otimes 1$
%$p \otimes 1_{\calm(\calk_G)} \sim % 1 \otimes 1$
%1_{\calm(A)} \otimes
%1_{\calm(\calk_G)}$
(Murray--von Neumann equivalence) in $\calm(A \otimes^{C_0(X)} \calk(L^2(G)^\infty))$
by a $G$-invariant partial isometry.
\end{corollary}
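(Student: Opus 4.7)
The plan is to mimic the proof of Corollary 2.6 of Mingo--Phillips, substituting the equivariant stabilization provided by the previous corollary (the Cf. Theorem 2.4 analogue) for the group-equivariant one. First I identify the ambient algebra: since $\calk(L^2(G)^\infty) \otimes^{C_0(X)} A \cong \calk(L^2(G,A)^\infty)$, its multiplier algebra is $\call(L^2(G,A)^\infty)$, and the projections $p \otimes 1$ and $1 \otimes 1$ act by cutting down to the $G$-invariant Hilbert $A$-submodules $(p \otimes 1) L^2(G,A)^\infty$ and $L^2(G,A)^\infty$, respectively. Thus the conclusion amounts to producing a $G$-equivariant isomorphism of Hilbert $A$-modules $V : L^2(G,A)^\infty \to (p \otimes 1) L^2(G,A)^\infty$: the operator $V$, reinterpreted as an element of $\call(L^2(G,A)^\infty)$, will then be a $G$-invariant partial isometry realizing the Murray--von Neumann equivalence.

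Next I would consider the $G$-Hilbert $A$-module $\cale := pA$, where $p$ acts on $A$ by left multiplication. Because $p \in \calm(A)$ is $G$-invariant, left multiplication by $p$ commutes with the $G$-action, so $pA$ is $G$-stable and inherits a $G$-Hilbert $A$-module structure with inner product $\langle x, y\rangle = x^* y$. I would check the two hypotheses of the previous corollary: $pA$ is countably generated because $A$ has a strictly positive element (hence is countably generated as an $A$-module, and so is the submodule $pA$); and $pA$ is full because $\overline{\langle pA, pA\rangle_A} = \overline{A p A} = A$, which is exactly the fullness of $p$ in $\calm(A)$.

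Applying the earlier corollary then yields a $G$-equivariant isomorphism of Hilbert $A$-modules $L^2(G, pA)^\infty \cong L^2(G, A)^\infty$. It remains to identify $L^2(G, pA)^\infty$ with the range of $p \otimes 1$. Since the $G$-action and module structure on $L^2(G, A)^\infty = \ell^2(G)^\infty \otimes^{C_0(X)} A$ act trivially with respect to the left multiplication by $p \in \calm(A)$ on the $A$-factor, the assignment $1_g \otimes a \mapsto 1_g \otimes (pa)$ (for $a \in A$) gives a $G$-equivariant identification $(p \otimes 1) L^2(G,A)^\infty = L^2(G, pA)^\infty$ of $G$-Hilbert $A$-modules. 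Composing with the isomorphism produced above yields the desired $V$.

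I expect the main technical point to be verifying that the two identifications in the last step are truly $G$-equivariant in the inverse-semigroup sense, since the $G$-action on $\ell^2(G)$ (Definition \ref{definition1}) carries the characteristic factor $[hh^* \ge gg^*]$ that must be tracked through the tensor product. Once that bookkeeping is done, fullness of the $A$-valued inner product on $pA$ (via fullness of $p$) and countable generation (via $\sigma$-unitality of $A$) are the only substantive hypotheses to check; the rest is a direct translation of Mingo--Phillips.
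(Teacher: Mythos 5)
Your proposal is correct and takes essentially the same route as the paper, whose proof simply states that the original Mingo--Phillips argument goes through verbatim with $\otimes^{C_0(X)}$ in place of $\otimes$. What you have written out --- applying the equivariant stabilization corollary to the full, countably generated $G$-Hilbert $A$-module $pA$ and identifying $L^2(G,pA)^\infty$ with the range of $p\otimes 1$ to extract the $G$-invariant partial isometry --- is precisely that verbatim argument, including the correct caveat about tracking the factors $[hh^*\ge gg^*]$ through the balanced tensor product.
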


\begin{proof}
The proof of the original goes verbatim through. The usage of the balanced tensor product $\otimes^{C_0(X)}$
instead of $\otimes$ is obligatory.
\end{proof}

%DEFINIEREN $\calk_G$.

\begin{remark}   \label{remark1}
{\rm
In \cite{attempts} we %found
%used
considered another model of an $\ell^2(G)$-space, denoted $\widehat{\ell^2}(G)$, over the $G$-algebra $C_0(X)$ which satisfies analogous results
as presented in this section, provided that $G$ is {\em $E$-continuous}.
This essentially means that certain increasing sequences of projections in $C_0(X)$ converge %in
pointwise to a projection in $C_0(X)$. We could enlarge any inverse semigroup $G$ to another
bigger $E$-continuous inverse semigroup $\overline G$ containing $G$.
%(this could be done by considering the carrier of functions in $\C^X$).
(By adjoining to $G$ the projections corresponding to all subsets of $X$.)
This would yield another $\ell^2(G)$-model, namely $\mbox{Res}_{\overline G}^G(\widehat{\ell^2}(\overline G))$,
now over the $G$-algebra $C_0(X_{\overline G})$.
Even if $\overline G$ is usually uncountable, the module $\widehat{\ell^2}(\overline G)$
is still countably generated (as $\varphi_{g e} = \varphi_g \cdot 1_e$ for $e \in \overline E$).
}
\end{remark}

%Ist aber $C_0(X_{\overline G}) = \ell^\infty(X)$ inseparabel.
%
%
%WIRD aber $\overline{G}$ überabzahlbar? Macht nix.
%
%
%$s(\vee_{e \in I} e) = \vee_{e \in I} s e s^*$
%
%$s x s^*$. character $s x s^*(e)= x (s^* e s)$.
%
%Jedoch $\widehat{\ell^2}(\overline G)$ nicht countably generated, was ein problem sein könnte.
%
%Könnte trotzdem countably generated sein, indem man $\varphi_e = \varphi_1 \cdot 1_e$ nimmt
%für $e \in \overline E$.

\section{$\varepsilon KK^G$ is triangulated}

\begin{definition}
{\rm
Let $\varepsilon K K^G$ denote the full subcategory of $KK^G$ which is generated by all objects in $KK^G$
which are isomorphic in $KK^G$ %(possibly in $KK^G$)
to a fibered $G$-algebra.
}
\end{definition}

In this section we shall show that $\varepsilon KK^G$ is a triangulated category.
At first we need a Cuntz picture of $\varepsilon KK^G$. To this end we shall adapt Meyer's paper
\cite{meyer} which provides a Cuntz picture of Kasparov theory in the group equivariant case.

A central idea of \cite{meyer} is to make the operator of a Kasparov cycle $G$-invariant
by switching to the $L^2(G)$-version of a Hilbert module, like in \ref{lemmamingophillips23}.
We want to adapt \cite{meyer} to our setting, and that is why we need a model of an $\ell^2(G)$-space
which has nice properties so that the Mingo--Phillips tricks of Section  \ref{sectionMingoPhillips} hold.
It is less difficult to find such an $\ell^2(G)$-space which is a $G$-Hilbert modul over some
$G$-algebra $B$. %,
By what we observed in Section \ref{sectionMingoPhillips} we could use $\ell^2(G)$
of Definition \ref{definition1} or $\mbox{Res}_{\overline G}^G \ell^2(\overline G)$ of Remark \ref{remark1}.
%see Section \ref{sectionMingoPhillips} including Remark \ref{remark1}.
But as in \cite{meyer}, one often needs finally to cancel the $\ell^2(G)$-space
by Morita equivalence as follows. One has given a $G$-algebra $\calk(\ell^2(G)) \otimes A$ and wants to get rid of
$\calk(\ell^2(G))$. Hence one uses Morita equivalence $\calk(\ell^2(G)) \equiv \calk(B) \cong B$
%and so $KK^G$-equivalent change of $\calk(\ell^2(G)) \otimes A$ to $B \otimes A$. Now $B$ must %something simple enough,
and so $KK^G$-equivalently changes $\calk(\ell^2(G)) \otimes A$ to $B \otimes A$. Now $B$ must %something simple enough,
be the neutral element for the tensor product,
like $B=\C$, such that $B \otimes A \cong A$. That is why we have this constraint on the coefficient
algebra $B$ of the $\ell^2(G)$-module.

The $\ell^2(G)$-space of Definition \ref{definition1} over the $G$-algebra $B= \varepsilon(E)$ satisfies this
constraint for the class of fibered $G$-algebras $A$ and the balanced tensor product
by Lemma \ref{lemma1}.

%As an example one could enlarge a given inverse semigroup $G$ to an inverse semigroup
%$\overline{G}$ which is closed under taking under countable chains of idempotents of $E$,
%so that $\overline{G}$ becomes $\overline{E}$-continuous.

\begin{lemma}	\label{lemmacone}
\begin{itemize}

\item[(a)]
If $T,A,B$ are $G$-algebras and $T$ is equipped with the trivial $G$-action then
there is a canonical isomorphism
$$(T \otimes A) \otimes^{C_0(X)} B \cong T \otimes ( A \otimes^{C_0(X)} B). $$

\item[(b)]
If $A,B,C$ are $G$-algebras and
$f: A \rightarrow B$ is a $G$-equivariant homomorphism then
$$\mbox{cone}(f) \otimes^{C_0(X)} C \cong \mbox{cone}(f \otimes^{C_0(X)} \mbox{id}_C).$$
%canonically for $f \otimes \mbox{id}_C : A \otimes^{C_0(X)} C \rightarrow B \otimes^{C_0(X)} C$.

\end{itemize}
\end{lemma}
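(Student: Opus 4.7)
My plan for (a) is to define the candidate isomorphism at the algebraic level by
$$\Phi\bigl((t \otimes a) \otimes b\bigr) \;=\; t \otimes (a \otimes b)$$
and verify it descends to the balanced quotient on both sides. Because $T$ carries the trivial $G$-action, the $C_0(X)$-structure on $T \otimes A$ coincides with $1_T \otimes (\cdot)$ acting on the second factor; hence for each $e \in E$,
$$e(t \otimes a) \otimes b - (t \otimes a) \otimes e(b) \;=\; t \otimes \bigl(e(a) \otimes b - a \otimes e(b)\bigr),$$
which manifestly maps into $T$ tensored with the balancing ideal defining $A \otimes^{C_0(X)} B$. The symmetric formula provides an inverse, and the $*$-homomorphism property as well as $G$-equivariance are immediate from the trivial action on $T$.

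For (b) I would realize $\mbox{cone}(f) = \{(a,\phi) \in A \oplus C_0([0,1),B) : \phi(0) = f(a)\}$ as the pullback of $f$ and evaluation at $0$. Since $[0,1)$ carries the trivial $G$-action, $C_0([0,1),B) \cong C_0([0,1)) \otimes B$ as $G$-algebras, so part (a) gives
$$C_0([0,1),B) \otimes^{C_0(X)} C \;\cong\; C_0\bigl([0,1),\, B \otimes^{C_0(X)} C\bigr).$$
Using this identification I define the natural $G$-equivariant $*$-homomorphism
$$\Psi:\mbox{cone}(f) \otimes^{C_0(X)} C \longrightarrow \mbox{cone}\bigl(f \otimes^{C_0(X)} \mbox{id}_C\bigr),\qquad (a,\phi)\otimes c \longmapsto \bigl(a \otimes c,\; t \mapsto \phi(t) \otimes c\bigr),$$
which by construction respects the pullback condition at $t = 0$ and extends to the completion.

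The \textbf{main obstacle} is showing $\Psi$ is a bijection, equivalently that the pullback defining the cone commutes with $-\otimes^{C_0(X)} C$. I would compare the short exact sequences
$$0 \to SB \otimes^{C_0(X)} C \to \mbox{cone}(f)\otimes^{C_0(X)} C \to A \otimes^{C_0(X)} C \to 0$$
and the analogous one obtained by applying $\mbox{cone}(-)$ to $f \otimes^{C_0(X)} \mbox{id}_C$, and invoke the five lemma once the outer terms are identified via part (a) (applied both to $CB = C_0([0,1)) \otimes B$ and to $SB = C_0((0,1)) \otimes B$). Exactness of the top row survives tensoring with $C$ because the cone quotient $\mbox{cone}(f) \to A$ admits a continuous linear section, e.g.\ $a \mapsto (a,\, t \mapsto (1-t) f(a))$. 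The trivial $G$-action on the parameter interval, which is exactly the hypothesis making (a) available, is what makes this splitting and hence the exactness transfer through the balanced tensor product.
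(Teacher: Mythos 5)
The paper states Lemma \ref{lemmacone} with no proof at all (it is treated as a canonical fact and used only in the subsequent corollary with $T=C_0(\R)$, $C=\varepsilon(E)$), so there is no argument of the author's to compare yours against; your proposal supplies a proof and its overall strategy is sound. Two points should be made explicit. First, in part (a) the passage from the flip $(T\otimes A)\otimes B\cong T\otimes(A\otimes B)$ to the quotient by the balancing ideals uses that $T\otimes\bigl((A\otimes B)/I\bigr)\cong (T\otimes A\otimes B)/\overline{T\otimes I}$; this is automatic for the maximal tensor product and holds for the minimal one whenever $T$ is exact, which covers every $T$ the paper needs ($C_0(\R)$, $C_0([0,1))$ are nuclear). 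Second, in part (b) the phrase ``continuous linear section'' is too weak if the spatial tensor product is intended: to conclude that $0\to SB\otimes C\to\mbox{cone}(f)\otimes C\to A\otimes C\to 0$ remains exact after tensoring you need the section to extend to $A\otimes_{\min}C\to\mbox{cone}(f)\otimes_{\min}C$, i.e.\ it should be completely positive (or at least completely bounded), not merely bounded. Your explicit section $a\mapsto\bigl(a,\,t\mapsto(1-t)f(a)\bigr)$ is in fact completely positive, contractive and commutes with the $E$-action, so it does the job and moreover descends to the balanced quotients. One further gap to close: exactness of the \emph{balanced} top row at the left-hand term, i.e.\ injectivity of $SB\otimes^{C_0(X)}C\to\mbox{cone}(f)\otimes^{C_0(X)}C$, does not follow formally from exactness before balancing (balancing is an additional quotient); it can be obtained by observing that composing this map with your $\Psi$ yields the embedding $SB\otimes^{C_0(X)}C\cong S(B\otimes^{C_0(X)}C)\hookrightarrow\mbox{cone}(f\otimes^{C_0(X)}\mbox{id}_C)$ coming from part (a), which is injective. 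With these points added, the five lemma applies and $\Psi$ is an isomorphism.
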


%If we set $T=\Sigma$ and $B= \varepsilon(E)$ in the last lemma then we see

\begin{corollary}
The class of fibered $G$-algebras is closed under taking suspension and cones.
\end{corollary}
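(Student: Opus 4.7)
The plan is to derive both closure properties from Lemma \ref{lemmacone} together with the idempotence relation $\varepsilon(E) \otimes^{C_0(X)} \varepsilon(E) \cong \varepsilon(E)$ provided by Lemma \ref{lemma1}. The reason one hopes this is enough is that ``being fibered'' should turn out to be the same as ``being absorbed by $\varepsilon(E) \otimes^{C_0(X)} (-)$'', and both suspension and cones are constructions that commute with tensoring by another algebra.

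Concretely, I would first record the useful reformulation that a $G$-algebra $B$ is fibered if and only if $B \cong \varepsilon(E) \otimes^{C_0(X)} B$ $G$-equivariantly. The nontrivial direction follows by writing $B = \varepsilon(E) \otimes^{C_0(X)} A$ and applying associativity together with Lemma \ref{lemma1} to the outer factor:
$$\varepsilon(E) \otimes^{C_0(X)} B \;\cong\; \big(\varepsilon(E) \otimes^{C_0(X)} \varepsilon(E)\big) \otimes^{C_0(X)} A \;\cong\; \varepsilon(E) \otimes^{C_0(X)} A \;=\; B.$$
With this in hand, fiberedness becomes a stability property under $\varepsilon(E) \otimes^{C_0(X)} (-)$ that is easy to test against other functors.

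For suspension, write $SB = C_0(\R) \otimes B$ with trivial $G$-action on $C_0(\R)$. Lemma \ref{lemmacone}(a) together with the symmetry of the balanced tensor product then yields
$$SB \;=\; C_0(\R) \otimes \big(\varepsilon(E) \otimes^{C_0(X)} A\big) \;\cong\; \varepsilon(E) \otimes^{C_0(X)} \big(C_0(\R) \otimes A\big),$$
exhibiting $SB$ as a fibered $G$-algebra with underlying algebra $SA$.

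For the mapping cone of a $G$-equivariant $*$-homomorphism $f \colon B_1 \to B_2$ between fibered $G$-algebras, I would apply Lemma \ref{lemmacone}(b) with $C = \varepsilon(E)$ to obtain
$$\mbox{cone}(f) \otimes^{C_0(X)} \varepsilon(E) \;\cong\; \mbox{cone}\big(f \otimes^{C_0(X)} \mbox{id}_{\varepsilon(E)}\big).$$
The canonical isomorphisms $B_i \cong \varepsilon(E) \otimes^{C_0(X)} B_i$ from the fiberedness criterion intertwine $f$ with $f \otimes^{C_0(X)} \mbox{id}_{\varepsilon(E)}$ by naturality, so the right-hand side is isomorphic to $\mbox{cone}(f)$ itself. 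Combining gives $\mbox{cone}(f) \cong \varepsilon(E) \otimes^{C_0(X)} \mbox{cone}(f)$, which by the criterion is fibered. The only place requiring care is this naturality identification (that the isomorphism $B \cong \varepsilon(E) \otimes^{C_0(X)} B$ is natural in $B$, and not merely an accident of any one presentation $B = \varepsilon(E) \otimes^{C_0(X)} A$); the remaining content of the proof is a twofold invocation of Lemma \ref{lemmacone}.
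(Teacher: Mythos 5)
Your proof is correct and follows essentially the same route as the paper, which simply sets $T=\Sigma=C_0(\R)$ and $B=C=\varepsilon(E)$ in Lemma \ref{lemmacone}; you have merely spelled out the details (the idempotence $\varepsilon(E)\otimes^{C_0(X)}\varepsilon(E)\cong\varepsilon(E)$ from Lemma \ref{lemma1} and the naturality needed to identify $\mathrm{cone}(f\otimes^{C_0(X)}\mathrm{id}_{\varepsilon(E)})$ with $\mathrm{cone}(f)$) that the paper leaves implicit.
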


\begin{proof}
Set $T=\Sigma=C_0(\R)$ (suspension) and $B= C=\varepsilon(E)$ in the last lemma.
\end{proof}

%\begin{lemma}
%The $G$-algebras $\calk(\ell^2(G))$ and $\varepsilon(E)$ are $G$-equivariantly Morita equivalent.
%In particular, they isomorphic in $KK^G$.
%\end{lemma}

%At first we need a Cuntz picture of $\varepsilon KK^G$.
%We shall not directly work with
Our aim is to slightly adapt \cite[Section 5]{attempts} %of \cite{},
by making the following simple modifications:
\begin{itemize}

\item
Replace every occurrence of the $G$-Hilbert $C_0(X)$-module $\widehat{\ell^2}(G)$ in \cite[Section 5]{attempts} %\cite{attempts}
by the $G$-Hilbert $\varepsilon(E)$-module $\ell^2(G)$ of Definition \ref{definition1}.
%(In \cite{meyer} it is the ordinary $\ell^2(G)$.)

\item
Substitute every single occurring $C_0(X)$  in \cite[Section 5]{attempts} which appears as a $G$-algebra in its own right
(not in $\otimes^{C_0(X)}$) by the $G$-algebra $\varepsilon(E)$ (for example in $C_0(X)^\infty$ or in $C_0(X) \oplus \calh$).
%(In \cite{meyer} it is $\C$.)

\end{itemize}

By an analogous replacement $L^2(G) \rightarrow \widehat{\ell^2}(G)$, $\C \rightarrow C_0(X)$ %together
%with
and $\otimes \rightarrow \otimes^{C_0(X)}$,
Section 5 of \cite{attempts} was obtained %by an adaption of
from Meyer's paper \cite{meyer}.

In the next theorem we state a version of % \cite[Theorem 5.15]{attempts} (or the original
\cite[Theorem 6.5]{meyer}
in a slightly simplified but less technical form, which summarizes its quintessence.

Since we did not go through all details of the paper \cite{meyer}, we should % should actually
view the following theorem as a conjecture!

%\begin{theorem}
%Let $A$ and $B$ be objects in $\varepsilon C^*_G$ and $x \in KK^G(A,B)$. Then there exist
%objects $A',B'$ in $\varepsilon C^*_G$, isomorphisms $a \in KK^G(A,A')$ and $b \in KK^G(B,B')$,
%and a morphism $f:A' \rightarrow B'$ in $\varepsilon C^*_G$ such that
%$$x = a \circ f \circ b^{-1}.$$
%
%%$\varepsilon(E)$-$G$-algebras $A'$ and $B'$
%
%\end{theorem}

\begin{theorem}[Cf. \cite{meyer}]
Let $A$ and $B$ be %$\varepsilon(E)$-$G$-algebras
fibered $G$-algebras
and $x \in KK^G(A,B)$. Then there exist
%$\varepsilon(E)$-
fibered
$G$-algebras
$A'$ and $B'$, isomorphisms $a \in KK^G(A,A')$ and $b \in KK^G(B,B')$,
and a $G$-equivariant $*$-homomorphism $f:A' \rightarrow B'$ such that
$$x = a \circ f \circ b^{-1}.$$

%$\varepsilon(E)$-$G$-algebras $A'$ and $B'$

\end{theorem}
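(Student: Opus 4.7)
The plan is to mimic Meyer's original proof of the Cuntz picture of $KK$-theory from \cite{meyer}, applying the two substitution rules specified just before the theorem: replace $\widehat{\ell^2}(G)$ by $\ell^2(G)$ of Definition \ref{definition1}, and replace any standalone $C_0(X)$ appearing as a $G$-algebra by $\varepsilon(E)$, with all tensor products understood as $\otimes^{C_0(X)}$. Given $x \in KK^G(A,B)$, I would begin by choosing a Kasparov cycle $(\phi,\cale,T)$ representing $x$, where without loss of generality $\cale$ is countably generated and $\phi$ is nondegenerate.

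The first technical step is to absorb a copy of the standard $G$-Hilbert module $L^2(G,B)^\infty = \ell^2(G)^\infty \otimes^{C_0(X)} B$ into $\cale$. Since $B$ is fibered, Corollary \ref{corollarymingophillips26} produces a $G$-invariant Murray--von Neumann equivalence identifying the stabilised module with the standard one; then the unitary constructed in Lemma \ref{lemmamingophillips23} conjugates $T$ into a $G$-invariant operator (up to a compact perturbation, which does not change the $KK^G$-class). With a $G$-invariant operator in hand, the Cuntz-picture construction becomes available: take $B'$ to be $\calk(\ell^2(G)^\infty) \otimes^{C_0(X)} B$, which by Lemma \ref{lemma71} is $KK^G$-equivalent to $B$, and build $A'$ as a $qA$-style pullback of $A$ along the operator data, designed so that the diagonal embedding together with the modified cycle assembles into an honest $G$-equivariant $*$-homomorphism $f\colon A'\to B'$. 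The isomorphisms $a$ and $b$ are then furnished by this construction and by Morita equivalence.

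The main obstacle will be verifying that the class of fibered $G$-algebras is preserved under every intermediate construction. The stabilisation step is benign because $\calk(\ell^2(G)^\infty) \otimes^{C_0(X)} B$ is fibered whenever $B$ is, by Lemma \ref{lemma1} and Definition \ref{deffibered}; suspensions and mapping cones of fibered algebras remain fibered by Lemma \ref{lemmacone} together with its corollary. The delicate point is the definition of $A'$: Meyer's construction involves pullbacks and kernel algebras, and one has to check that the functor $\varepsilon(E) \otimes^{C_0(X)} -$ commutes with the relevant finite limits in $C^*_G$ so that $A'$ lies in $\varepsilon KK^G$. Once this is established, the remainder of the argument is a line-by-line transcription of \cite[Section 5]{attempts} under the two substitution rules above, which is precisely why the author prefers to present the result as a conjecture rather than include the complete verification.
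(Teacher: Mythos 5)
Your outline follows precisely the route the paper intends: transcribe Meyer's Cuntz-picture argument (via Section 5 of \cite{attempts}) under the two substitution rules $\widehat{\ell^2}(G)\to\ell^2(G)$ and $C_0(X)\to\varepsilon(E)$, using Lemma \ref{lemmamingophillips23} and Corollary \ref{corollarymingophillips26} to render the operator $G$-invariant after stabilisation, Lemma \ref{lemma71} together with Lemma \ref{lemma1} to cancel $\calk(\ell^2(G)^\infty)$ against a fibered coefficient algebra, and Lemma \ref{lemmacone} to keep suspensions and cones inside the fibered class. Be aware, however, that the paper itself supplies no proof of this statement --- it explicitly asks the reader to regard the theorem as a conjecture because the details of the adaptation of \cite{meyer} were not checked --- and your proposal sits at the same level of completeness, since the one genuinely delicate step you correctly isolate (that the $qA$-type construction of $A'$ and the intermediate pullback and kernel algebras remain in $\varepsilon KK^G$, i.e.\ that $\varepsilon(E)\otimes^{C_0(X)}-$ interacts correctly with those constructions) is identified but not verified.
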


%We note that one proves the theorem at first as explained for algebras of the form $\varepsilon(E) \otimes^{C_0(X)} B'$.
%Then it is clear that the last theorem can be extended to suspended algebras $\Sigma^n A$.
%
%was mit suspension?
%Als folgerung, überall $\Sigma^n$ dazu.
%
%$KK^G(\Sigma A,B)$ MHM..

\begin{proposition}[Cf. \cite{meyernest}]
The category $\varepsilon KK^G$ is triangulated by calling a triangle distinguished if it is isomorphic
to a mapping cone triangle, and by defining the translation functor to be $A \mapsto \Sigma^{-1} A$
(desuspension).
\end{proposition}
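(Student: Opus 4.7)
The plan is to adapt the proof strategy of Meyer--Nest \cite{meyernest}, which rests on \cite{meyer}, using the Cuntz picture provided by the preceding (conjectural) theorem as the essential technical tool that reduces everything to constructions with $G$-equivariant $*$-homomorphisms and mapping cones.

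First I would verify the set-up. The preceding corollary shows that the class of fibered $G$-algebras is closed under suspension and mapping cones, and an equivariant Bott periodicity argument (blueprinted from \cite{meyer} in the manner of Section \ref{sectionAdaption}) shows that $\Sigma$ is an autoequivalence of $\varepsilon KK^G$, so $\Sigma^{-1}$ is a well-defined translation functor. The candidate class of distinguished triangles is closed under isomorphism by definition, so at least the data $(\varepsilon KK^G, \Sigma^{-1}, \text{distinguished triangles})$ is well-posed.

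Next I would check the axioms TR0--TR4. TR0 is immediate (the triangle $A \xrightarrow{\mathrm{id}} A \to 0 \to \Sigma^{-1} A$ is the mapping cone triangle of $\mathrm{id}_A$). For TR1 (every morphism extends to a distinguished triangle), given $x \in KK^G(A,B)$ the preceding theorem furnishes fibered algebras $A',B'$, $KK^G$-isomorphisms $a,b$ and a $G$-equivariant $*$-homomorphism $f \colon A' \to B'$ with $x = a \circ f \circ b^{-1}$; then the mapping cone triangle of $f$, composed with the isomorphisms $a$ and $b$, yields a distinguished triangle starting with $x$. Axiom TR2 (rotation) is the standard Puppe sequence argument, adapted by replacing every ordinary tensor product by $\otimes^{C_0(X)}$ throughout (which is permissible by Lemma \ref{lemmacone}). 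Axiom TR3 (the fill-in axiom) follows by applying the Cuntz picture to morphisms of triangles, lifting them to $G$-equivariant $*$-homomorphisms, and then using the functoriality of the cone construction together with the homotopy invariance of $KK^G$.

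The main obstacle is TR4 (the octahedral axiom). One proceeds by the classical construction of an explicit homotopy equivalence between two iterated mapping cones associated with a composition $A \to B \to C$; the verification that this construction respects the inverse-semigroup $G$-action and the $C_0(X)$-balanced tensor structure requires some bookkeeping, but once the Cuntz picture is in hand each step reduces to the corresponding step in \cite{meyernest} under the dictionary $L^2(G) \rightsquigarrow \ell^2(G)$ of Definition \ref{definition1}, $\C \rightsquigarrow \varepsilon(E)$, and $\otimes \rightsquigarrow \otimes^{C_0(X)}$ described in Section \ref{sectionAdaption}. In particular, the Mingo--Phillips-type absorption results of Corollary \ref{corollarymingophillips26} provide the Eilenberg-swindle and stability inputs that the original octahedral argument requires, and ensure that everything stays inside $\varepsilon KK^G$.
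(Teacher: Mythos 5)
Your proposal follows essentially the same route as the paper: both reduce the verification of the triangulated-category axioms to the Meyer--Nest argument for mapping cone triangles of genuine $*$-homomorphisms, taking the Cuntz-picture theorem as the key input and transporting everything via the dictionary $L^2(G)\rightsquigarrow \ell^2(G)$, $\C\rightsquigarrow\varepsilon(E)$, $\otimes\rightsquigarrow\otimes^{C_0(X)}$ of Section \ref{sectionAdaption}. The paper itself gives no more detail than a pointer to \cite[Section 6]{attempts} and \cite{meyernest} (and explicitly flags the Cuntz-picture theorem as a conjecture, handling the invertibility of $\Sigma$ via a direct-limit category rather than by Bott periodicity directly), so your sketch is, if anything, more explicit than the source.
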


For the details of the last proposition see \cite[Section 6]{attempts} and \cite{meyernest}, respectively.
Notice also, that actually a certain direct limit $\widetilde{\varepsilon KK^G}$ induced by the %translation
suspension functor
is triangulated, and $\varepsilon KK^G$ is just its sloppy notation.
Both categories, however, are equivalent.

%\begin{theorem}[Cf. 6.5 of \cite{meyer}]   \label{m65}
%
%Let $A$ and $B$ separable (ungraded) $G$-algebras.
%Set $[A,B]_s:= [\calk( \ell^2(G) \otimes \ell^2(\N)) \otimes^{C_0(X)} A,\K(G\N)B]$, where $[A,B]$ denotes the homotopy group of $*$-homomorphisms from $A$ to $B$.
%Define $q_s A := q(\K(G\N)A)$.
%The canonical functor $C^*_G \rightarrow KK^G$ factors through a functor
%%Let
%$\sharp :[C^*_G]_s \rightarrow KK^G$.
%% be the functor
%%induced by the canonical functor $C_G:C^*_G \rightarrow KK^G$.
%%($G-C^*$ denotes the category of separable $G$-$C^*$-algebras and $*$-homomorphisms.)
%There is a morphism $\pi_A^s \in [q_s A,A]_s$ (see \cite{meyer}), such that $\sharp(\pi_A^s) \in KK^G(q_s A,A)$
%is invertible.
%Then the map
%$$\Delta:[q_s A,q_s B]_s \rightarrow KK^G(A,B), \qquad \Delta(f) = \sharp(\pi_B^s) \circ \sharp(f) \circ {\sharp(\pi_A^s)}^{-1}$$
%is a natural isomorphism.
%Hence the Kasparov product on $KK^G$ corresponds to the composition of homomorphisms.
%\end{theorem}

\section{The Baum--Connes map}

\label{sectionBC}

%Of course, $\varepsilon K K^G$ contains all fibered $G$-algebras.
Throughout this section the fibered restriction functors and induction functors
are understood to be restricted to the category $\varepsilon KK^G$. 
%In this section we throughout work with the triangulated category $\varepsilon KK^G$ but denote it by $KK^G$
%for simplicity!
That is we view induction and fibered restriction as $\mbox{Ind}_H^G:KK^H \rightarrow \varepsilon KK^G$
%(see \ref{lemma1})
and $\mbox{R}_G^H: \varepsilon KK^G \rightarrow KK^H$.

%
%{\em %Attention!

To avoid %too
%clumpsy
cumbersome
notation, we shall make the following convention:

{\em
Throughout this section we shall exclusively work with the triangulated category $\varepsilon KK^G$
but denote it by $KK^G$
for simplicity most of the time!
}

%{\em Warning! Throughout this section we shall work exclusively within the triangulated category $\varepsilon KK^G$
%but denote it by $KK^G$ for simplicity!
%}

%{\em Throughout this section we shall work exclusively within the category $\varepsilon KK^G$!
%
%We shall write $KK^G:= \varepsilon KK^G$ for simplicity!
%}

\begin{definition}
{\rm
An {\em exact functor} $F:\cals \rightarrow \calt$ between triangulated categories $\cals$ and $\calt$
is a suspension-intertwining ($F \circ S_\cals = S_\calt \circ F$) functor which sends exact sequences
$S B \rightarrow C \rightarrow A \rightarrow B$ canonically to exact sequences
$S F(B) \rightarrow F(C) \rightarrow F(A) \rightarrow F(B)$ (see \cite{krause}).
A functor $F$ between triangulated categories is called {\em triangulated}
if it is exact and additive.
}
\end{definition}

%A functor between triangulated categories is called {\em triangulated} if it is exact (\cite[4.2]{krause}) and additive.

\begin{lemma}  \label{lemma7}
The fibered restriction functors $\mbox{R}_G^H$ %:{KK}^G \rightarrow {KK}^H$
and induction functors $\mbox{Ind}_H^G$ %:{KK}^H \rightarrow {KK}^G$
%and the tensor functors $KK^G \rightarrow KK^G: A \mapsto A \otimes^{C_0(X)} B$
are triangulated functors.
Also for every $G$-algebra $B$, the balanced tensor product functor $A \mapsto A \otimes^{C_0(X)} B$ from the category $KK^G$ into itself
is triangulated.

\end{lemma}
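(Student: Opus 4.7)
The plan is to verify, for each of the three functors, the properties defining a triangulated functor — additivity, intertwining of the suspension, and preservation of mapping cone triangles — with Lemma \ref{lemmacone} providing the main technical leverage. I would begin with the tensor product functor $T_B := (-) \otimes^{C_0(X)} B$, which is essentially immediate. Additivity follows because $\otimes^{C_0(X)}$ distributes over direct sums of $G$-algebras. Lemma \ref{lemmacone}(a) with $T = \Sigma = C_0(\R)$ yields the natural $G$-equivariant isomorphism $\Sigma A \otimes^{C_0(X)} B \cong \Sigma(A \otimes^{C_0(X)} B)$, so $T_B$ commutes with suspension. Lemma \ref{lemmacone}(b) directly gives $\mbox{cone}(f) \otimes^{C_0(X)} B \cong \mbox{cone}(T_B(f))$, hence $T_B$ sends mapping cone triangles of $G$-equivariant $*$-homomorphisms to mapping cone triangles. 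These natural isomorphisms at the $C^*$-algebra level descend to $\varepsilon KK^G$ by the universal property of $KK$-theory, so $T_B$ is triangulated.

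For the fibered restriction functor $R_G^H$, I would exploit its factorization from Definition \ref{deffiberedrestriction}:
\[
R_G^H = \mbox{Res}_{H \cdot E}^H \circ \bigl(\varepsilon(E_H) \otimes^{C_0(X)} -\bigr) \circ \mbox{Res}_G^{H \cdot E}.
\]
The ordinary restriction functors are triangulated essentially by inspection, since they leave the underlying $C^*$-algebra — and hence also the suspension and cone constructions — untouched, only forgetting part of the semigroup action. The middle factor is $T_B$ with $B = \varepsilon(E_H)$, already handled above. A composition of triangulated functors is triangulated, which settles this case.

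The main obstacle is the induction functor $\mbox{Ind}_H^G$, which must be treated by direct inspection of Definition \ref{defInductionAlg}. Additivity holds because a function $f: G_H \to A_1 \oplus A_2$ satisfying the equivariance condition $f(gh) = \sigma_H^{-1}(h^*h)\, h^*(f(g))$ splits componentwise into such functions, yielding $\mbox{Ind}_H^G(A_1 \oplus A_2) \cong \mbox{Ind}_H^G(A_1) \oplus \mbox{Ind}_H^G(A_2)$. For commutation with suspension, I would reinterpret a function $G_H \to C_0(\R, A)$ as an $\R$-parameter family of elements of $\mbox{Ind}_H^G(A)$; the equivariance condition is preserved pointwise in $\R$ because $H$ acts trivially on the $C_0(\R)$ factor, giving $\mbox{Ind}_H^G(\Sigma A) \cong \Sigma \mbox{Ind}_H^G(A)$. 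The same pointwise/fiberwise argument produces the natural isomorphism $\mbox{Ind}_H^G(\mbox{cone}(f)) \cong \mbox{cone}(\mbox{Ind}_H^G(f))$ by applying the induction construction to the pullback diagram defining the mapping cone, since the defining equivariance condition on functions $G_H \to \mbox{cone}(f)$ is linear and respects the $\mbox{cone}(f) \subseteq A \oplus C_0([0,1), B)$ decomposition. Naturality in the $H$-algebra input is transparent from the formulas, so all three algebra-level isomorphisms lift to $KK$-isomorphisms through the universal property of $KK^H$ and $KK^G$, completing the verification.
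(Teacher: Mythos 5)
Your proposal is correct and follows essentially the same route as the paper: reduce exact triangles to mapping cone triangles and check that each functor preserves cones and suspensions via Lemma \ref{lemmacone} (together with the factorization of $\mbox{R}_G^H$ built into Definition \ref{deffiberedrestriction}). The only difference is one of detail: where the paper outsources the compatibility of $\mbox{Ind}_H^G$ with mapping cones to \cite[4.3]{attempts}, you verify it directly from Definition \ref{defInductionAlg}, which is a sound substitute.
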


\begin{proof}
Given an exact triangle in $KK^H$, we may switch to its isomorphic mapping cone triangle % (see (\ref{mappingconetriangle}))
according to definition \cite[6.4]{attempts}. This mapping cone triangle is sent canonically to a mapping cone triangle
(and hence exact triangle) by the fibered restriction and induction functors by
Lemma \ref{lemmacone} and \cite[4.3]{attempts}.
%Additivity and stability of $\mbox{Ind}_H^G$ under the translation functor is also by Lemma \ref{lemmaInductionfunctorIntertwines}.
\end{proof}

For the orthogonal subcategory $\cals^\bot$ of a triangulated subcategory $\cals$ see \cite[4.8]{krause}.
The expression $\langle \cals \rangle$ denotes the generated triangulated subcategory of a subcategory
$\cals$ of a triangulated category.

\begin{definition}[Cf. Definition 4.1 of \cite{meyernest}]   \label{defCompactlyinduced}
{\rm
An object $A$ in $KK^G$ is called {\em compactly induced} %(cf. \cite[Definition 4.1]{meyernest})
if there exists an object $B$ in $KK^G$ and a finite subinverse semigroup $H \subseteq G$
such that $A$ is isomorphic to $\mbox{Ind}_H^G(B)$ in $KK^G$.
The full subcategory of $KK^G$ induced by the compactly induced objects is denoted by $\calc \cali$.
}
\end{definition}

\begin{definition}[Cf. Definition 4.1 of \cite{meyernest}]
{\rm
A morphism $f \in KK^G(A,B)$ is called a {\em weak equivalence} % (see \cite[Definition 4.1]{meyernest})
if $\mbox{R}_G^H(f)$ is invertible in $KK^H$ for all finite subinverse semigroups $H \subseteq G$.
}
\end{definition}

%besser weg:
%
\begin{definition}[Cf. Definition 4.5 of \cite{meyernest}]   \label{CIsimplicalApproximation}
{\rm
A {\em $\calc \cali$-simplicial approximation} %(see \cite[Definition 4.5]{meyernest})
of an object $A$ in $KK^G$
is a weak equivalence $f \in KK^G(B,A)$ such that $B$ is an object in $\langle \calc \cali \rangle$.
}
\end{definition}

\begin{definition}[Cf. Definition 4.5 of \cite{meyernest}]  \label{defDiracMorphism}
{\rm
A {\em Dirac morphism} %(see \cite[Definition 4.5]{meyernest})
is a $\calc \cali$-simplicial approximation of $\varepsilon(E)$.
}
\end{definition}

\begin{definition}[Cf. Definition 4.1 in \cite{meyernest}]
{\rm
Call an object $A$ in $KK^G$ {\em weakly contractible} if $\mbox{R}_G^H(A) = 0$ in $KK^H$ for all
finite subinverse semigroups $H \subseteq G$.
Write $\calc \calc$ for the full subcategory of $KK^G$ of weakly contractible objects.
}
\end{definition}

\begin{lemma}[Cf. Proposition 4.4 of \cite{meyernest}]      \label{lemmaorthogonalcategory}
We have $\calc \calc = \langle \calc \cali \rangle^{\bot}$.
\end{lemma}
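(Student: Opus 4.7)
The proof is a standard consequence of the adjointness (Proposition \ref{propAdjoint}), applied twice in opposite directions. The plan is to verify both inclusions separately; the crucial ingredient for each direction is the natural isomorphism
\[
KK^G(\mbox{Ind}_H^G(B), A) \cong KK^H(B, \mbox{R}_G^H(A))
\]
available whenever $A$ is a fibered $G$-algebra (so in particular when $A$ is an object of $\varepsilon KK^G$), for every finite subinverse semigroup $H\subseteq G$ and every $H$-algebra $B$.

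For $\calc\calc \subseteq \langle\calc\cali\rangle^\bot$, I would start from an object $A\in\calc\calc$, so that $\mbox{R}_G^H(A)=0$ in $KK^H$ for every finite $H$. Adjointness then yields $KK^G(\mbox{Ind}_H^G(B),A) \cong KK^H(B,0) = 0$ for any $B$, hence $A\in\calc\cali^\bot$. The extension from $\calc\cali^\bot$ to $\langle\calc\cali\rangle^\bot$ is then purely formal: the class $\{C\in\varepsilon KK^G : KK^G(C,A)=0\}$ is closed under suspensions, cones and direct summands because $KK^G(-,A)$ is a cohomological functor, hence it is a thick triangulated subcategory of $\varepsilon KK^G$, and thus contains $\langle\calc\cali\rangle$ as soon as it contains $\calc\cali$.

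For the reverse inclusion $\langle\calc\cali\rangle^\bot \subseteq \calc\calc$, I would take $A\in\langle\calc\cali\rangle^\bot$, fix an arbitrary finite subinverse semigroup $H\subseteq G$, and set $B:=\mbox{R}_G^H(A)$. Since $\mbox{Ind}_H^G(B)\in\calc\cali\subseteq\langle\calc\cali\rangle$, orthogonality gives $KK^G(\mbox{Ind}_H^G(B),A)=0$. Applying the adjunction isomorphism with this specific choice of $B$ translates this vanishing into $KK^H(\mbox{R}_G^H(A),\mbox{R}_G^H(A))=0$, so the identity morphism of $\mbox{R}_G^H(A)$ is zero in $KK^H$, forcing $\mbox{R}_G^H(A)\cong 0$. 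As $H$ was arbitrary, $A\in\calc\calc$.

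I do not expect a genuine obstacle here, since the whole argument is the categorical mantra that a left adjoint sends a generating class to a co-generating class for the right adjoint's kernel. The only mild care needed is to note that the adjunction of Proposition \ref{propAdjoint} is stated for fibered $G$-algebras on the right-hand side, which is exactly the category $\varepsilon KK^G$ we are working in, and that induced algebras $\mbox{Ind}_H^G(B)$ are automatically fibered (as recorded in Remark \ref{remark2} and implicit in the construction $\mu$ of Lemma \ref{lemma8}), so $\calc\cali$ indeed sits inside $\varepsilon KK^G$.
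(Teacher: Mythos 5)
Your proposal is correct and is essentially the paper's own proof: the paper simply defers to Proposition 4.4 of Meyer--Nest, whose argument is exactly the two-sided adjunction computation you spell out (including the trick of taking $B=\mbox{R}_G^H(A)$ to force the identity of $\mbox{R}_G^H(A)$ to vanish), with Proposition \ref{propAdjoint} supplying the needed adjointness on the fibered subcategory.
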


\begin{proof}
One proves this verbatim as in \cite[Proposition 4.4]{meyernest}.
One just needs the adjointness relation of Proposition \ref{propAdjoint}.
%
%Let $N \in \calc \calc$ and $P = \mbox{Ind}_H^G(A) \in \calc \cali$.
%By Lemma \ref{lemmaAdjointfunctorIndRes} we have $KK^G(P,N) = KK^H(A, \mbox{Res}_G^H \, N)= 0$.
%This identity must then be true for any $P \in \langle \calc \cali \rangle$ since $N^\bot$ is a localizing
%subcategory. If, on the other hand, $N \in KK^G$ with $KK^G(P,N)=0$ for all $P \in \langle \calc \cali \rangle$
%then also for $P= \mbox{Ind}_H^G \mbox{Res}_G^H (N)$ and so
%$KK^H(\mbox{Res}_G^H \, N , \mbox{Res}_G^H \, N)=0$, which implies $\mbox{Res}_G^H(N) = 0$ for all compact
%subinverse semigroups $H \subseteq G$ and thus $N \in \calc \calc$.
%
\end{proof}

\begin{lemma}[Cf. Lemma 4.2 of \cite{meyernest}]   \label{lemmaCIlocalizing}
The categories
$\langle \calc \cali \rangle$ and $\calc \calc$ are localizing subcategories of $KK^G$.
They
are closed under forming the balanced tensor product $A \mapsto A \otimes^{C_0(X)} B$ for all
$G$-algebras $B$.
%$B$ in $KK^G$.
%Furthermore, $\calc \calc \subseteq \langle \calc \cali \rangle^\bot$.
%That is, an object $A$ in $\langle \calc \cali \rangle$ and
\end{lemma}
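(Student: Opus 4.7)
The plan is to reduce both claims to the identity of Lemma \ref{lemma6} and the compatibility of $\mbox{R}_G^H$ with the balanced tensor product from Lemma \ref{lemma5}, then to propagate from generators to the full subcategories via Lemma \ref{lemma7}.

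For $\langle \calc \cali \rangle$ I would argue at the level of generators first. By Lemma \ref{lemma6}, for any $H$-algebra $A$ and fibered $G$-algebra $B$,
\[
\mbox{Ind}_H^G(A) \otimes^{C_0(X)} B \;\cong\; \mbox{Ind}_H^G\bigl( A \otimes^{C_0(X_H)} \mbox{R}_G^H(B) \bigr),
\]
so the balanced tensor product of a compactly induced algebra with an arbitrary fibered $B$ is again compactly induced. Now consider the full subcategory $\calu_B$ of $\varepsilon KK^G$ consisting of objects $A$ with $A \otimes^{C_0(X)} B \in \langle \calc \cali \rangle$. Since $(-)\otimes^{C_0(X)} B$ is a triangulated functor by Lemma \ref{lemma7} and preserves countable $c_0$-direct sums, $\calu_B$ is a localizing triangulated subcategory of $\varepsilon KK^G$; it contains $\calc \cali$ by the displayed isomorphism, hence it contains $\langle \calc \cali \rangle$. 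This yields the tensor-product closure. That $\langle \calc \cali \rangle$ is itself localizing is the content of the Meyer--Nest convention on the generated triangulated subcategory together with the existence of countable $c_0$-direct sums in $\varepsilon KK^G$.

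For $\calc \calc$, each restriction functor $\mbox{R}_G^H$ is triangulated by Lemma \ref{lemma7} and commutes with countable $c_0$-direct sums, because balanced tensoring with $\varepsilon(E_H)$ and the underlying restriction both do. Hence
\[
\calc \calc \;=\; \bigcap_{H} \ker\bigl(\mbox{R}_G^H\bigr),
\]
taken over all finite subinverse semigroups $H \subseteq G$, is a triangulated subcategory closed under countable direct sums, i.e.\ localizing. Tensor-product closure follows immediately from Lemma \ref{lemma5}: if $\mbox{R}_G^H(A) = 0$ for all finite $H$, then
\[
\mbox{R}_G^H\bigl(A \otimes^{C_0(X)} B\bigr) \;\cong\; \mbox{R}_G^H(A) \otimes^{C_0(X_H)} \mbox{R}_G^H(B) \;=\; 0
\]
for each $H$, so $A \otimes^{C_0(X)} B$ again belongs to $\calc \calc$.

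The main obstacle I expect is not the algebraic content, which reduces cleanly to Lemmas \ref{lemma5}, \ref{lemma6}, and \ref{lemma7}, but the categorical bookkeeping around countable direct sums in $\varepsilon KK^G$: one must verify that the $c_0$-direct sum of fibered $G$-algebras is again fibered up to $KK^G$-isomorphism, that this sum distributes over $\otimes^{C_0(X)}$, and that each $\mbox{R}_G^H$ commutes with it. All of these follow readily from the defining formula $\varepsilon(E)\otimes^{C_0(X)}(-)$ of fibered $G$-algebras together with the exactness of the balanced tensor product on the relevant modules, but they must be stated explicitly to fit Meyer--Nest's convention that $\langle -\rangle$ denotes the generated localizing triangulated subcategory.
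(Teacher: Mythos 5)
Your proposal is correct and follows essentially the same route as the paper, which likewise reduces the tensor-product stability to Lemmas \ref{lemma5} and \ref{lemma6} and otherwise follows the argument of Lemma 4.2 of Meyer--Nest; you have merely spelled out the propagation from generators to the generated localizing subcategory that the paper leaves implicit. (One minor point: Lemma \ref{lemma6} holds for arbitrary $G$-algebras $B$, so your restriction to fibered $B$ in the $\langle \calc\cali\rangle$ step is unnecessary and should be dropped to match the statement.)
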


\begin{proof}
One proves this like Lemma 4.2 of \cite{meyernest}.
The stability under tensor products follows from Lemmas \ref{lemma5} and \ref{lemma6}.
\end{proof}

\begin{theorem}[\cite{meyerTriangulatedII}]    \label{thmapprox}
There exists a Dirac morphism. Even more, every object in $\varepsilon KK^G$ has a $\calc \cali$-simplicial approximation.
\end{theorem}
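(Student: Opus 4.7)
The plan is to deduce this theorem as an application of Meyer's general machinery from \cite{meyerTriangulatedII}, rather than constructing the Dirac morphism by hand. Meyer's paper gives an abstract criterion: given a triangulated category $\calt$ with countable direct sums and a set $\calp$ of "compact" generators (satisfying appropriate countability and stability hypotheses), every object of $\calt$ admits a $\langle \calp \rangle$-simplicial approximation, i.e.\ a weak equivalence out of an object in the localizing subcategory generated by $\calp$. My job is therefore to identify $\calt$ with $\varepsilon KK^G$ and $\calp$ with $\calc \cali$, and check that the hypotheses of the abstract theorem are met in our setting.

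First I would recall that $\varepsilon KK^G$ has been shown to be a triangulated category in the preceding section, and that countable direct sums exist there (they are inherited from $KK^G$, since a countable direct sum of fibered $G$-algebras is again fibered by distributing $\otimes^{C_0(X)}$ over the sum). Next I would verify that $\calc \cali$ is a countable set of generators up to isomorphism: the countability comes from the countability of the inverse semigroup $G$, which controls the finite subinverse semigroups $H$, and from the separability/countable generation of the allowed $H$-algebras. Then I would record the two essential structural inputs:
\begin{itemize}
\item the adjunction $\mbox{Ind}_H^G \dashv \mbox{R}_G^H$ from Proposition~\ref{propAdjoint}, which is what makes $\calc \cali$ a reasonable class of "elementary" objects and lies behind the orthogonality identification $\calc \calc = \langle \calc \cali \rangle^{\bot}$ in Lemma~\ref{lemmaorthogonalcategory};
\item the fact that $\langle \calc \cali \rangle$ is localizing and stable under $- \otimes^{C_0(X)} B$, established in Lemma~\ref{lemmaCIlocalizing}.
\end{itemize}

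With these ingredients in place, Meyer's theorem applies and yields, for every object $A$ in $\varepsilon KK^G$, an exact triangle $\Sigma N \to \tilde A \to A \to N$ with $\tilde A \in \langle \calc \cali \rangle$ and $N \in \calc \calc$; the morphism $\tilde A \to A$ is then the desired $\calc \cali$-simplicial approximation. The Dirac morphism is obtained by specializing $A = \varepsilon(E)$, the tensor unit of $\varepsilon KK^G$ (cf.\ Lemma~\ref{lemma1}), as required by Definition~\ref{defDiracMorphism}.

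The main obstacle I expect is the bookkeeping needed to verify Meyer's abstract hypotheses literally in the inverse semigroup setting: in particular, confirming that the compactness of objects of $\calc \cali$ in $\varepsilon KK^G$ (i.e.\ that $KK^G(\mbox{Ind}_H^G(B), -)$ commutes with countable direct sums) survives the restriction to fibered $G$-algebras, and that the suspension/desuspension arguments go through when the unit object has been replaced by $\varepsilon(E)$ instead of $\C$. Both points should be addressable by combining the adjunction of Proposition~\ref{propAdjoint} with the Mingo--Phillips-type stabilization results of Section~\ref{sectionMingoPhillips}, but they are the places where a careful reader would want to see the details filled in; because the author explicitly defers to \cite{meyerTriangulatedII}, I would present the argument as a reduction to that paper and flag these checks rather than redoing them from scratch.
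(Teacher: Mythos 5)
Your proposal follows essentially the same route as the paper: both reduce the statement to Meyer's abstract machinery in \cite{meyerTriangulatedII} (his Theorem 7.3 for discrete groups), with the only substantive checks being that the fibered restriction functor $\mbox{R}_G^H$ commutes with direct sums and is right adjoint to $\mbox{Ind}_H^G$ (Proposition \ref{propAdjoint}). The additional hypotheses you flag (countability of generators, the localizing property of $\langle \calc\cali\rangle$) are consistent with, and slightly more explicit than, what the paper records, but the argument is the same.
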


\begin{proof}
One proceeds verbatim as in the first two paragraphs of the proof of \cite[Theorem 7.3]{meyerTriangulatedII}, which
handles the discrete group case. %, by
One just replaces the ordinary restriction functor $\mbox{Res}_H^G$ with the fibered
restriction functor $\mbox{R}_H^G$ everywhere.
It is required that this functor commutes with direct sums, which is satisfied.
%The essential import point is that the functor $\mbox{R}_G^H$ is right adjoint to the functor $\mbox{Ind}_G^H$
%by Proposition \ref{}.
Also the necessary fact that the functor $\mbox{R}_G^H$ is right adjoint to the functor $\mbox{Ind}_G^H$
is verified in Proposition \ref{propAdjoint}.
The claim follows then by the verbatim analogous argument given in the paragraph after
\cite[Theorem 7.3]{meyerTriangulatedII}.
\end{proof}

Even if Theorem \ref{thmapprox} shows already that every object allows a $\calc \cali$-simplicial approximation,
%we would also like to point out %, as in \cite{},
%we may also point out
we shall also demonstrate
how this fact can already be deduced from the existence of a Dirac morphism
by tensoring with a coefficient algebra. This is the next lemma % theorem LEMMA----
and its corollary.

\begin{lemma}[Cf. Theorem 4.7 of \cite{meyernest}]   \label{theoremDiracexactsequence}
Let $D \in  KK^G(P,\varepsilon(E))$ be a Dirac morphism with $P \in \langle \calc \cali \rangle$.
Then there exists an exact triangle
\begin{equation}   \label{diractriangle}
\xymatrix{
P \ar[r]^D & \varepsilon(E) \ar[r]  & N \ar[r] & \Sigma^{-1} P}
\end{equation}
in $KK^G$ with $N \in \calc \calc$.
For every fibered $G$-algebra $A$ this induces canonically by tensoring
%for every {automorphic} object $A$ in $KK^G$ %we have
an exact triangle
\begin{equation}   \label{exactsequencediracA}
\xymatrix{
P \otimes^{C_0(X)} A \ar[r]^{D \otimes \mbox{id}}  & \varepsilon(E) \otimes^{C_0(X)} A \ar[r]
& N \otimes^{C_0(X)} A \ar[r] & \Sigma^{-1} (P \otimes^{C_0(X)} A)}
\end{equation}
in $KK^G$. %Notice that
Here, one has $P \otimes^{C_0(X)} A \in \langle \calc \cali \rangle$
and $N \otimes^{C_0(X)} A \in \calc \calc$.

\end{lemma}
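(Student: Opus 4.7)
The plan is to build $N$ as the cone of $D$ in the triangulated category $\varepsilon KK^G$, and then propagate everything by functoriality.

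First, I would invoke the axiom of a triangulated category that every morphism extends to an exact triangle. Applied to $D \in KK^G(P,\varepsilon(E))$, this produces an object $N$ together with an exact triangle
\[
\xymatrix{P \ar[r]^D & \varepsilon(E) \ar[r] & N \ar[r] & \Sigma^{-1} P}
\]
in $\varepsilon KK^G$, which is (\ref{diractriangle}) as required.

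Second, to verify $N \in \calc \calc$, fix a finite subinverse semigroup $H \subseteq G$ and apply the functor $\mbox{R}_G^H$, which is triangulated by Lemma \ref{lemma7}, to obtain an exact triangle in $KK^H$. Since $D$ is a $\calc \cali$-simplicial approximation by Definition \ref{defDiracMorphism}, it is a weak equivalence, so $\mbox{R}_G^H(D)$ is invertible in $KK^H$. The standard triangulated-category fact that the third vertex of an exact triangle whose first arrow is an isomorphism is zero then yields $\mbox{R}_G^H(N) = 0$. Since $H$ was arbitrary, $N \in \calc \calc$.

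Third, for (\ref{exactsequencediracA}) I would apply the functor $- \otimes^{C_0(X)} A$, which is also triangulated by Lemma \ref{lemma7}, to the triangle of the first step; this immediately gives the desired exact triangle. The two membership claims $P \otimes^{C_0(X)} A \in \langle \calc \cali \rangle$ and $N \otimes^{C_0(X)} A \in \calc \calc$ then follow at once from the closure of these subcategories under the balanced tensor product $- \otimes^{C_0(X)} A$, which is precisely the content of Lemma \ref{lemmaCIlocalizing}.

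The proof is essentially bookkeeping once the earlier structural results are in hand. The only point requiring care is to rely explicitly on Lemma \ref{lemma7} rather than ordinary exactness arguments, so that the subtle behaviour of the $C_0(X)$-balanced tensor product in the fibered setting does not cause trouble; everything else is formal manipulation in a triangulated category.
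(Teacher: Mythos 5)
Your proposal is correct and follows essentially the same route as the paper: extend $D$ to an exact triangle by the triangulated-category axiom, apply the exact functor $\mbox{R}_G^H$ together with invertibility of $\mbox{R}_G^H(D)$ to conclude $\mbox{R}_G^H(N)=0$ and hence $N \in \calc\calc$, then tensor with $A$ using the exactness from Lemma \ref{lemma7} and deduce the membership claims from Lemma \ref{lemmaCIlocalizing}. No gaps.
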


%ABER $\varepsilon(E) \otimes \Sigma$ keine %, schon eine.
%fibered algebra.

\begin{proof}
By the axioms of a triangulated category, the morphism $D$ from $KK^G$ fits into an exact triangle
as in (\ref{diractriangle}) for some object $N$ in $KK^G$.
Since $\mbox{R}_G^H$ is an exact functor by Lemma \ref{lemma7}, this triangle canonically induces exact triangles in $KK^H$
via $\mbox{R}_G^H$ for all finite subinverse semigroups $H$ in $G$.
By Definition \ref{defDiracMorphism}, $\mbox{R}_G^H(D)$ is an isomorphism in $KK^H$, and so
$\mbox{R}_G^H(N)$ vanishes in $KK^H$ by Corollary 1.2.4
and Remark 1.1.21 of \cite{neemanbook}, or confer \cite[Lemma 2.2]{meyernest}. But this means that $N \in \calc \calc$.

%If $A$ is an object in $KK^G$
%then
%By the exactness of the balanced tensor product functor (see Lemma \ref{lemma7})
By Lemma \ref{lemma7}
the sequence (\ref{exactsequencediracA}) is exact.
The last claim follows from Lemma \ref{lemmaCIlocalizing}.
%
% then the functor $F$ given by $F(B) = B \otimes A$ is exact on $KK^G$.
%%KL�REN WARUM
%Hence (\ref{exactsequencediracA}) is an exact triangle and by %the
%Lemma \ref{lemmaCIlocalizing},
%$P \otimes A$ and $N \otimes A$ stay in $\langle \calc \cali \rangle$ and $\calc \calc$,
%respectively. %, as claimed.
\end{proof}

\begin{remark}
{\rm
The importance of Lemma \ref{theoremDiracexactsequence} is that its validity is equivalent
to the existence of an exact localization functor $L:KK^G \rightarrow KK^G$
with kernel $\langle \calc \cali \rangle$, see for example Proposition 4.9.1 in \cite{krause}.
This implies the existence of an exact colocalization functor $\Gamma:KK^G \rightarrow KK^G$
with kernel $\calc \calc$ and an equivalence
$KK^G/\calc \calc \cong \langle \calc \cali \rangle$ in the opposite category of $KK^G$, see for example
%${(KK^G)}^{\rm op}$, see for example
% a quotient functor $Q:KK^G \rightarrow KK^G/{\calc\calc}$, see for example
Propositions 4.12.1, 4.10.1 and 4.11.1 in \cite{krause} together with Lemma \ref{lemmaorthogonalcategory}.
Confer also Proposition 2.9 and the remarks after Definition 4.2 in \cite{meyernest}. %;
The complementarity condition of \cite[Definition 2.8]{meyernest} is satisfied
by \cite[Proposition 4.10.1]{krause} and Lemma \ref{lemmaorthogonalcategory},
which combine to $\mbox{Im} L = \langle \calc \cali \rangle^\bot = \calc \calc$.
%The importance of Theorem \ref{theoremDiracexactsequence} is that its validity is equivalent
%to the existence of a localization functor $L:KK^G \rightarrow KK^G$, or equivalently,
%the existence of a quotient functor $Q:KK^G \rightarrow KK^G/{\calc\calc}$, see for example
%\cite[Proposition 4.9.1]{krause}.
}
\end{remark}

\begin{corollary}    \label{corollaryExistenceCIsimplicalapprox}
Every object $A$ in $\varepsilon KK^G$ has a $\calc \cali$-simplicial approximation,
for example
$D \otimes \mbox{id}$ %\in KK^G(B,A)$
of (\ref{exactsequencediracA}).

\end{corollary}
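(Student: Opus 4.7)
The plan is to read off the $\calc \cali$-simplicial approximation directly from the exact triangle (\ref{exactsequencediracA}) provided by Lemma \ref{theoremDiracexactsequence}, using the fiberedness of $A$ to identify the codomain with $A$ itself. In essence, the corollary is a direct repackaging of that lemma.

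First, I would fix a Dirac morphism $D \in KK^G(P,\varepsilon(E))$ with $P \in \langle \calc \cali \rangle$, whose existence is guaranteed by Theorem \ref{thmapprox}. Given a fibered $G$-algebra $A$, Lemma \ref{theoremDiracexactsequence} applied to $D$ and $A$ furnishes the exact triangle
\[
P \otimes^{C_0(X)} A \xrightarrow{D \otimes \mbox{id}_A} \varepsilon(E) \otimes^{C_0(X)} A \longrightarrow N \otimes^{C_0(X)} A \longrightarrow \Sigma^{-1}(P \otimes^{C_0(X)} A)
\]
in $KK^G$, together with the membership information $P \otimes^{C_0(X)} A \in \langle \calc \cali \rangle$ and $N \otimes^{C_0(X)} A \in \calc \calc$.

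Second, since $A$ is fibered, Definition \ref{deffibered} provides a canonical $KK^G$-isomorphism $\varepsilon(E) \otimes^{C_0(X)} A \cong A$, so I may regard $D \otimes \mbox{id}_A$ as a morphism $P \otimes^{C_0(X)} A \to A$. Its domain already lies in $\langle \calc \cali \rangle$, which matches the first requirement of Definition \ref{CIsimplicalApproximation}.

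Third, to verify the weak equivalence condition, I would apply the triangulated functor $\mbox{R}_G^H$ of Lemma \ref{lemma7} to the above triangle for every finite subinverse semigroup $H \subseteq G$. Because $N \otimes^{C_0(X)} A \in \calc \calc$, the object $\mbox{R}_G^H(N \otimes^{C_0(X)} A)$ vanishes in $KK^H$, so the resulting exact triangle in $KK^H$ has its third term zero. By the standard triangulated-category fact used already in the proof of Lemma \ref{theoremDiracexactsequence} (Corollary 1.2.4 and Remark 1.1.21 of \cite{neemanbook}), the morphism $\mbox{R}_G^H(D \otimes \mbox{id}_A)$ is therefore invertible in $KK^H$, which is precisely the weak equivalence condition. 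There is no serious obstacle: once Lemma \ref{theoremDiracexactsequence} is in hand, the only points to check are the identification $\varepsilon(E) \otimes^{C_0(X)} A \cong A$ from fiberedness and the triangulated principle that a morphism with vanishing mapping cone is an isomorphism.
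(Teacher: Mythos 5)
Your proposal is correct, but the key step is argued along a different route than the paper's. The paper verifies the weak-equivalence condition directly: by Lemma \ref{lemma5} the fibered restriction functor is multiplicative for the balanced tensor product, so $\mbox{R}_G^H(D \otimes \mbox{id}) = \mbox{R}_G^H(D) \otimes \mbox{R}_G^H(\mbox{id})$, and since $D$ is a Dirac morphism the first factor is invertible, hence so is the product. You instead apply $\mbox{R}_G^H$ to the whole exact triangle (\ref{exactsequencediracA}), invoke $N \otimes^{C_0(X)} A \in \calc\calc$ to kill the cone, and conclude invertibility from the triangulated principle that a morphism with vanishing cone is an isomorphism. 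Both arguments are valid and both ultimately rest on Lemma \ref{lemma5} (your route uses it indirectly, since the membership $N \otimes^{C_0(X)} A \in \calc\calc$ in Lemma \ref{theoremDiracexactsequence} is obtained from Lemma \ref{lemmaCIlocalizing}, whose tensor-stability is proved via Lemmas \ref{lemma5} and \ref{lemma6}). The paper's version is shorter and does not need the triangle at all for this corollary; yours has the mild advantage of making explicit the identification $\varepsilon(E) \otimes^{C_0(X)} A \cong A$ for fibered $A$ (via Definition \ref{deffibered} and Lemma \ref{lemma1}), which the paper leaves implicit, and of exhibiting the mapping cone of the approximation as a concrete weakly contractible object.
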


\begin{proof}
Let $D \in KK^G(P,\varepsilon(E))$ be a Dirac morphism as in Lemma \ref{theoremDiracexactsequence}.
Since by Definition \ref{defDiracMorphism} $\mbox{R}_G^H(D)$ is an isomorphism in $KK^H$
for every finite subinverse semigroup $H \subseteq G$,
$\mbox{R}_G^H(D \otimes \mbox{id}) = \mbox{R}_G^H(D) \otimes \mbox{R}_G^H(\mbox{id})$
(see Lemma \ref{lemma5})
is also an isomorphism. Hence $D \otimes \mbox{id}$ is a weak equivalence.
\end{proof}

\begin{definition}[Baum--Connes map]    \label{defBaumConnesmap}
{\rm

Let $A$ be an object in $\varepsilon KK^G$. (That is, $A$ is a $G$-algebra which is %$KK^G$-equivalent 
isomorphic in $KK^G$
to a fibered $G$-algebra.)
Choose a $\calc \cali$-simplicial approximation $D \in KK^G(\tilde A,A)$ for $A$. %it.
(That is, $\tilde A$ is a kind of an approximation of $A$ which in the easiest case is an induced
algebra $\tilde A = \mbox{Ind}_H^G(B)$.)
%, and in the general case something similar.)
%Consider a type of descent homomorphism $j^G:KK^G \rightarrow KK$ (foll crossed product, Sieben's crossed
%product, or reduced crossed product) of your choice,
%Consider a descent functor %homomorphism
%$j^G:KK^G \rightarrow KK$
%and
For a
given descent functor $j^G:KK^G \rightarrow KK$ (there are several choices corresponding
to different crossed products)
%, for example, the universal one, the compatible universal one (i.e. Sieben's one)
%and the reduced one)
form the morphism
$$j^G(D) \in KK(\tilde A \rtimes G, A \rtimes G)$$
as a potentially good approximation between $\tilde A \rtimes G$ and $A \rtimes G$.

The {\em Baum--Connes map} with coefficient algebra $A$ (and with respect to the descent functor $j^G$)
%(and with respect to $j^G$)
is defined to be the abelian group homomorphism
$\nu^G:K(\tilde A \rtimes G) \rightarrow K( A \rtimes G)$
%induced by $j^G(D)$
as indicated in the following commuting diagram:
$$
\xymatrix{
K(\tilde A \rtimes G) \ar[rr]^{\nu^G} \ar[d]^\cong  && K(A \rtimes G)  \\
KK(\C,\tilde A \rtimes G) \ar[rr]^{\otimes j^G(D)}  &&
KK(\C,A \rtimes G) \ar[u]^\cong
}
$$
The vertical arrows are the usual isomorphisms \cite[\S 6.3]{kasparov1981} and the bottom arrow is
the map which takes the Kasparov product
with $j^G(D)$.
% taking the Kasparov product
%with $j^G(D)$.

}
\end{definition}

Definition \ref{defBaumConnesmap} does not depend on the choice of
the $\calc \cali$-simplicial approximation
$D$,
%; learn more about this in
see Proposition 2.9.2 of \cite{meyernest}.

\begin{remark}   \label{remark3}
{\rm

\begin{itemize}

\item[(a)]

%Recall from the introduction that in case of Sieben's crossed product
%the domain $K(\tilde A \rtimes G)$ of the Baum--Connes map
%is potentially computable.

Notice that in the case of Sieben's crossed product, the domain $K(\tilde A \rtimes G)$ of the Baum--Connes map
is potentially computable by homological means in a triangulated category
as explained in the introduction %and
by (\ref{comp1}), Theorem \ref{thm1} and
\cite[Theorem 5.1]{meyerTriangulatedII} applied to
the functor
%$F: KK^G \rightarrow \mbox{Ab}:
$F(A) = K(A \rtimes G)$ from $KK^G$ to the abelian groups. 
%
% and the isomorphism \ref{}.

%Wegen descent kann man nicht besser mit $KK^G(A \otimes \varepsilon, B \otimes \varepsilon)$ arbeiten.
%
%
%Of course, we may interpret the Baum--Connes map as a map $K(B \rtimes G) \rightarrow K(A \rtimes G)$
%%on $K$-theory
%by \cite[\S 6, Theorem 3]{kasparov1981}.
%%
%%by identifying $KK(\C,A) \cong K(A)$.
%
%
%Wenn $G$ minimales element hat, was wir leicht hinzufügen können, dann
%ist $\C$ fibered algebra und damit Baum--Connes für $\C \rtimes G$.

\item[(b)]

For the full crossed product the constructed Baum-Connes map %for $\C \rtimes G$
is usually %powerless.
less powerful. %useful.
Indeed, for instance if $E$ has a minimal element $e_0$ then $\C$ is a fibered $G$-algebra.
(If $E$ has no minimal element it may be adjoined to $G$ by setting $G':= G \sqcup \{e_0\}$
and $e_0 g = g e_0 = e_0$.)
Note that $e_0 G = e_0 G e_0$ is a subgroup of $G$ and
$KK^G(\C,A) = KK^{e_0 G}(\C,A_{\varepsilon_{e_0}})$.
Hence, if $e_0 G$ is the trivial group then $\mbox{Ind}_{\{e_0\}}^G (\C) = \C$ is a Dirac element and
the Baum--Connes map $K(\tilde \C \rtimes G) \rightarrow K(\C \rtimes G)$ is trivially the identity map.
%Even if $e_0 G$ is nontrivial,
In any case, at the end of the day we miss a Green imprimitivity theorem as in Theorem \ref{thm1}
for potential computation of the domain %$K(\tilde \C \rtimes G)$
of the Baum-Connes map.
%Even if $e_0 G$ is nontrivial, we have no Green imprimitivity theorem as in Theorem \ref{thm1} at hand
%for potential computation of the domain %$K(\tilde \C \rtimes G)$ of the Baum-Connes map.

\end{itemize}

}
\end{remark}

%But $\C$ cannot be meaningfully approximated in $KK^G$ since
%$KK^G(\C,A) = KK^{e_0 G}(\C,A_{\varepsilon_{e_0}})$. Hence.

%Gut, gilt ja für gruppe auch.
%Aber schon absurd wenn $e_0 G = e_0$.
%Dann $Ind_{e_0}^G(\C) = \C$ ?
%scheint so. damit BC die identität.
%wir vermissen eben den green-julg für allgemeines crossed procuct.
%
%$J= e_0 G = e_0 G e_0$ ist gruppe, und $Ind_{J}^G$ wie für gruppen.
%
%Also BC
%$K(\tilde \C \rtimes G) \rightarrow K(\C \rtimes G)$
%was natürlich normal nicht isomorphism.
%
%Wo $\tilde \C$ approximation wie für $J$.
%
%Nun könnte doch isomorphism, ist ja für $J = e_0$ auch.

\bibliographystyle{plain}
\bibliography{references}

\end{document}